\providecommand{\noopsort}[1]{}
\theoremstyle{definition} 
 \newtheorem{definition}{Definition}[section]
 \newtheorem{remark}[definition]{Remark}
\newtheorem*{notation}{Notations}
\theoremstyle{plain}
 \newtheorem{theorem}[definition]{Theorem}
 \newtheorem{lemma}[definition]{Lemma}
\newcommand*{\house}[1]{
  \mathord{
    \mathpalette\@house{#1}
  }
}
\newcommand*{\@house}[2]{
  \dimen@=\fontdimen8 %
      \ifx#1\scriptscriptstyle\scriptscriptfont
      \else\ifx#1\scriptstyle\scriptfont
      \else\textfont\fi\fi
      3 %
  \sbox0{%
    $#1%
      \vrule width\dimen@\relax
      \overline{%
        \kern2\dimen@
        \begingroup 
          #2%
        \endgroup
        \kern2\dimen@
      }
      \vrule width\dimen@\relax
      \mathsurround=1.5\dimen@ 
    $
  }
  \ht0=\dimexpr\ht0-\dimen@\relax
  \dp0=\dimexpr\dp0+2\dimen@\relax
  \vbox{
    \kern\dimen@ 
    \copy0 
  }
}
\let\cal\mathcal
\def\11{{\mathbf 1}}
\def\b{\beta}
\theoremstyle{remark}
\newtheorem{exampl}[subsubsection]{Example}
\def\bee{\begin{exampl}}
\def\eee{\end{exampl}}
\def\bn{\begin{notation}}
\def\en{\end{notation}}
\def\br{\begin{remark}}
\def\er{\end{remark}}
\def\bp{\begin{prop}}
\def\ep{\end{prop}}
\def\bpr{\begin{proof}}
\def\epr{\end{proof}}
\def\bt{\begin{thm}}
\def\et{\end{thm}}
\def\be{\begin{equation}}
\def\ee{\end{equation}}
\def\bl{\begin{lem}}
\def\el{\end{lem}}
\def\bc{\begin{cor}}
\def\ec{\end{cor}}
\def\bd{\begin{defn}}
\def\ed{\end{defn}}
\author{Radhakrishnan Nair$^{\ddag}$}
\thanks{}
\address{$^{\ddag}$
Mathematical Sciences, The University of Liverpool, 
1 Peach Street, Liverpool, L69 7ZL, U.K.}
\email{nair@liv.ac.uk}
\urladdr{}
\author{Jean-Louis Verger-Gaugry$\mbox{}^{\Diamond}$ }
\thanks{}
\address{$\mbox{}^{\Diamond}$ 
LAMA, CNRS UMR 5127,
Univ. Grenoble Alpes, Univ. Savoie Mont~Blanc,
\newline
F - \!73000 Chamb\'ery, France}
\email{Jean-Louis.Verger-Gaugry@univ-smb.fr}
\urladdr{}
\author{Michel Weber$\mbox{}^{\dag}$}
\thanks{}
\address{$\mbox{}^{\dag}$
IRMA, 10  Rue du G\'en\'eral-Zimmer, 67084 Strasbourg,  Cedex, France. 
}
\email{michel.weber@math.unistra.fr}
\urladdr{}
\title[On Good Universality and the Riemann Hypothesis]
{On Good Universality and the Riemann Hypothesis}
\begin{document}

\title{On Good Universality and the Riemann Hypothesis}

\maketitle

\begin{abstract}
We use subsequence and moving average ergodic 
theorems applied to Boole's transformation and its 
variants and their invariant measures on the real 
line to give new characterisations of the 
{\color{black} Lindel\"of} Hypothesis and the 
Riemann {\color{black} Hypothesis}.  These ideas are then used 
to study the value distribution of  Dirichlet $L$ 
{\color{black} -functions}, and the zeta functions of  Dedekind, 
Hurwitz and Riemann  and their  derivatives.  This 
builds on earlier work of R. L. Adler and B. Weiss, 
M. Lifshits 
and M. Weber, J. Steuding,
J. Lee and A. I. {\color{black}Suriajaya} using 
Birkhoff's ergodic theorem  and probability theory.
\end{abstract}

\vspace{0.7cm}

\noindent
Keywords:
\keywords{Ergodic Averages, Boole's Transformation, 
Dedekind Zeta Function, \textcolor{black}{Dirichlet $L$-function,
$L$-Series},
Hurwitz Zeta Function, Riemann Zeta Function, 
The {\color{black} Lindel\"of} Hypothesis, 
The Riemann Hypothesis.}
\vspace{0.5cm}

\noindent
2020 Mathematics Subject Classification:
11M06, 28D05, 37A44, 11M26, 11M35, 30D35.



\tableofcontents

\numberwithin{equation}{section} \numberwithin{figure}{section}
\newpage
\section{Introduction}
\label{S1}

Equivalent statements
of the Riemann {\color{black}Hypothesis}  are well-known
and can be found e.g. in
Titchmarsh \cite{titchmarsh} or more recently
e.g. in Mazur and Stein \cite{mazurstein}. 
The Lindel\"of Hypothesis asserts that for every
$\epsilon > 0$ we have
\begin{equation}
\label{LH}
|\zeta(1/2 + it)| = O(|t|^{\epsilon})
\qquad \quad
\mbox{as}~ |t| \to \infty,
\end{equation}
where $\zeta (z)$ is the Riemann zeta funtion,
and Landau's notation ``$O$" means
that there exists a neighbourhood $\mathcal{V}$
of $\infty$ and a constant $c \geq 0$ such that:
$|\zeta(1/2 + it)| \leq c |t|^{\epsilon}$
for all $t \in \mathcal{V}$.

Before stating the main Theorems
\ref{theorem1}--\ref{theorem1.5},
and
Theorem \ref{theorem3} in the context of
moving average ergodic theorems,
let us make precise the context in probability theory.

Let $(X_i)_{i\geq 1}$ be a sequence of 
independent Cauchy random variables, 
with characteristic function 
$\phi (t)=e^{|t|}$ and consider 
the partial sums 
$S_n= X_1+ \ldots + X_n$ $(n=1,2, \ldots )$.

M. Lifshits and M. Weber 
\cite{lifshitsweber3}
studied the 
value distribution of the Riemann zeta function
$\zeta ( s)$ sampled along the  random walk 
$(S_n)_{n\geq 1}$ showing, for $b> 2$, that
\begin{equation}
\label{limLW2}
\lim _{N\to \infty}{1\over N} \sum _{n=1}^N 
\zeta \left  ({1\over 2} +iS_n \right ) 
= 1+ o\left ({(\log N)^b \over N^{1\over 2}} 
\right ).
\end{equation}
They also showed that
\begin{equation}
\label{supIneqLW2}
\left \| {\sup_{n\geq 1}}{|\sum _{q=1}^n
\zeta({1\over 2}+iS_q)-n|\over n^{1\over 2}
( \log n)^b}\right \| _2 < \infty .
\end{equation}
Here of course $f(x) =o(g(x))$ means 
$\lim _{x\to \infty}{f(x) \over g(x)} = 0$.  
This result was extended to $L${\color{black} -functions} and Hurwitz 
zeta functions by T. Srichan 
\cite{srichan}.
In \textcolor{black}{\cite{steuding2}}
J. Steuding replaced 
$(S_n)_{n\geq 1}$ in  
\cite{lifshitsweber3}
by 
$(T^nx)_{n\geq 1}$ for almost all $x$ on 
$\mathbb{ R}$ with respect the Lebesgue measure, for 
the Boolean dynamical system by $Tx:= x-{1\over x}$.  
This result has its roots in the observation, due 
to G. Boole 
\cite{boole},
subsequently developed by 
J.W.L. Glashier 
\cite{glashier} 
\cite{glashier2},
that if $f$ 
is integrable on the real numbers, then
$$
\int _{-\infty}^{\infty} f(x)dx = 
\int _{-\infty}^{\infty} f\Bigl(x-{1\over x}\Bigr) dx.
$$
See 
\cite{adlerweiss}
for a proof of the ergodicity of 
this dynamical system.  
In Theorem 3.1 in
\cite{leeetal},
the maps
\begin{equation}
\label{Talphabeta}
T_{\alpha , \beta} (x) \ = \ 
\left\{
\begin{array}{cc}
{ \alpha \over 2} \left ( 
{x+ \beta \over \alpha } - {\alpha \over x- \beta } 
\right )
& \mbox{\quad if}~ x \ \not= \beta, \\
\beta 
& \mbox{\quad if}~ x\ = \beta,
\end{array}
\right.
\end{equation}
for $\alpha > 0$ and real $\beta$,
are shown to be measure preserving and ergodic with 
respect to the probability measure
\begin{equation}
\label{mualphabeta}
\mu _{\alpha , \beta } (A) 
= {\alpha \over \pi} 
\int _A {dt \over \alpha ^2 + (t-\beta )^2},
\end{equation}
for any Lebesgue measureable  subset $A$ of the 
real numbers.  


As noted in 
\cite{leeetal},
if $\lambda$ denotes 
Lebesgue measure then 
$\mu _{\alpha , \beta }(A) \leq {1\over \alpha \pi}
\lambda (A)$ for all $A \in \cal{B}$,
where $\mathcal{B}$ denotes the Lebesgue
$\sigma$-algebra. 
Also if 
$\phi _{\alpha , \beta} (x) = \alpha x + \beta$,  
with $T= T_{1,0}$ and $\mu = \mu _{1,0}$ 
we have
\begin{equation}
\label{TalphabetaT}
T_{\alpha , \beta} = \phi  _{\alpha , \beta} \circ 
T \circ \phi ^{-1} _{\alpha , \beta},
\end{equation}
which implies the $\mu _{\alpha , \beta }$-measure 
preservation and ergodicity of 
$T_{\alpha , \beta}$. {\color{black} The referee of this paper has brought to the authors' attention
the paper 
\cite{maugmaisrichan},
which considers a family of  
rational maps more general than that 
in 
\cite{leeetal},
which we consider in this paper.  
The suggestion is that our methods in this paper will
adapt to the more general framework in 
\cite{maugmaisrichan}
-- 
at least to a degree.  We will
not however explore this possibility in this paper, 
as it would be too much of a digression.}

\bigskip

We consider a number of general definitions.

We say $(k_n)_{n\geq 1} \in \mathbb{ N}$ is 
{\it $L^p$-good 
universal} if for each dynamical system 
$(X,\beta , \mu , T)$ and  for each 
$g \in L^p(X, \beta , \mu )$  the limit
\begin{equation}
\label{Lp_gooduniversal}
\ell _{T,g}(x)=\lim _{N\to \infty} 
{1 \over N}\sum _{n=0}^{N-1}g(T^{k_n}x)
\end{equation}
exists $\mu$ almost everywhere. 


Also throughout the rest of this paper $\cal{B}$ 
will be the Lebesgue $\sigma$- algebra on 
$\mathbb{ R}$ and when a $\sigma$-algebra on 
$\mathbb{R}$ is not explicitly mentioned it is in 
fact $\cal{B}$.

We say that 
a  sequence 
$ x_1, \ldots , x_N, \ldots $ is 
{\it uniformly 
distributed modulo one} if
\begin{equation}
\label{unifdistrmod1}
 \lim_{N\to \infty} {1 \over N} \# \{ 
 1 \leq n \leq N : \{ x_n \} \in I \} =|I| 
\end{equation}
for every interval $I \subseteq [0,1)$. For a real 
number $y$ we have used $\{ y \}$ to denote its 
fractional part and let $[y]= y-\{ y \}$ denote its 
integer part. 
We say a  sequence of integers is 
{\it uniformly distributed on $\mathbb{ Z}$}  
if it is 
uniformly distributed among the residue classes 
modulo $m$, for each natural number $m>1$.
We say a sequence of natural numbers 
$(k_n)_{n\geq 1}$ is {\it Hartman uniformly distributed 
(on $\mathbb{ Z}$)} 
if it is uniformly distributed on 
$\mathbb{ Z}$, and for each irrational number 
$\alpha$, the sequence 
$(\{ k_n \alpha \})_{n\geq 1}$ is uniformly 
distributed modulo one .  
This condition coincides 
with $(k_n)_{n\geq 1}$ being 
uniformly distributed 
on the Bohr compactification of $\mathbb{ Z}$.
Some basics on Hartman uniform distribution
can be found in the book of Kuipers and 
Niederreiter \cite{kuipersniederreiter}.
Note that if $(k_n)_{n\geq 1}$ is 
Hartman uniformly 
distributed on $\mathbb{ Z}$, 
and if for $z$ with 
$|z|=1$ we set
\begin{equation}
\label{FNz_definition}
\mbox{} \qquad 
\qquad \qquad 
F(N, z ) := {1\over N} \sum _{n=0}^{N-1} z^{k_n} 
\qquad \qquad \qquad(N=1,2,\cdots)
\end{equation}
then
\begin{equation}
\label{FNz_limite}
\lim _{N\to \infty} F(N,z) =
\left\{
\begin{array}{cc}
1 
&  \mbox{{\rm if}} \ z =1, \\
0 & 
\ {\rm otherwise.}
\end{array}
\right.
\end{equation}

\medskip

For a $c \in \mathbb{ R}$ we use $\mathbb{ H}_c$ to 
denote the half plane 
$\{ z \in \mathbb{ C} : \Re (z) > c \}$ and  use 
$\mathbb{ L}_c$  to denote
the line $\{ z \in \mathbb{ C} : \Re (z) = c \}$.  
We establish the following theorem.


\begin{theorem}
\label{theorem1} 
Let $f$ be a meromorphic  function on 
$\mathbb{ H}_c$ satisfying the following conditions:

(1)  there exists {\color{black}a} $K>0$ and a $c'>c$  
such that for any $t\in \mathbb{ R}$, we have 
\begin{equation}
\label{majoration_f}
|f(  \{ \sigma + it | \sigma > c' \} ) | \leq K;
\end{equation}

(2)  there  exists a non-increasing 
$\nu :(c,  \infty ) \to \mathbb{ R}$ such that if $\sigma$ is sufficiently near $c$ then $\nu (\sigma ) \leq
1+c- \sigma$ 
and that for any $\epsilon > 0$, 
we have $f(\sigma + it ) \ll _{f, \epsilon}
|t|^{\nu (\sigma )+\epsilon}$ as 
$|t| \to \infty $;

(3) $f$ has  at most one pole of 
order $m$ 
in $\mathbb{ H}_c$ at $s_0= \sigma_0 +it_0$, 
that is, we can write its Laurent expansion 
near $s = s_0$ as
$$
{a_{-m} \over (s-s_0)^m} + {a_{-(m-1)} \over (s-s_0)^{(m-1)}} + \ldots   {a_{-1} \over (s-s_0)^{-1 }}
+ a _0 + \sum _{n=1}^{\infty} a_n(s-s_0)^n
$$
for $m \geq 0$ where we set $m=0$ if 
$f$ has no pole in $\mathbb{ H}_c$.  

Then if 
$(k_n)_{n\geq 1}$ is $L^p$-good universal and 
Hartman uniformly distributed, for any 
$s \in \mathbb{ H}_{c} \backslash \mathbb{ L}_{\sigma_0}$,
we have
\begin{equation}
\label{lim_Lpgood_Hartman}
\lim _{N \to \infty} {1\over N} 
\sum _{n=0}^{N-1} f(s+iT^{k_n}_{\alpha , \beta }
( x)) 
= {\alpha \over \pi } \int _{\mathbb{ R}}
{f(s+ i \tau ) \over \alpha ^2 +
(\tau - \beta )^2}  d\tau ,
\end{equation}
for almost all $x$ in $\mathbb{ R}$.
\end{theorem}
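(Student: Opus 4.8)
The plan is to recognise the left-hand side of (\ref{lim_Lpgood_Hartman}) as a subsequence ergodic average of a single function on $\mathbb{R}$, to recognise the right-hand side as its integral against the invariant measure $\mu_{\alpha,\beta}$, and then to combine $L^p$-good universality (for the existence of the limit) with Hartman uniform distribution (to identify that limit with the integral). First I would fix $s\in\mathbb{H}_c\setminus\mathbb{L}_{\sigma_0}$, write $s=\sigma_s+it_s$, and set $g(\tau):=f(s+i\tau)$ for $\tau\in\mathbb{R}$. Since $\Re(s+i\tau)=\sigma_s>c$, the point $s+i\tau$ stays in $\mathbb{H}_c$; and since $\sigma_s\neq\sigma_0$ it never meets the pole $s_0=\sigma_0+it_0$ (whose real part is wrong), so $g$ is holomorphic along this line, hence continuous and finite on all of $\mathbb{R}$. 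With $U$ the Koopman operator $Ug=g\circ T_{\alpha,\beta}$, the averages in (\ref{lim_Lpgood_Hartman}) are $A_N g(x):=\frac1N\sum_{n=0}^{N-1}(U^{k_n}g)(x)$, and by (\ref{mualphabeta}) the right-hand side is exactly $\int_{\mathbb{R}}g\,d\mu_{\alpha,\beta}$.

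Next I would verify that $g\in L^p(\mathbb{R},\mu_{\alpha,\beta})$. For $\sigma_s>c'$ this is immediate from hypothesis (1), which gives $|g|\le K$, so $g$ is bounded. For $c<\sigma_s\le c'$, hypothesis (2) gives $|g(\tau)|=|f(\sigma_s+i(t_s+\tau))|\ll_{f,\epsilon}|\tau|^{\nu(\sigma_s)+\epsilon}$ as $|\tau|\to\infty$; since $\nu$ is non-increasing and $\nu(\sigma)\le 1+c-\sigma<1$ near $c$, we get $\nu(\sigma_s)<1$, and as the density of $\mu_{\alpha,\beta}$ decays like $|\tau|^{-2}$, the integral $\int|g|^p\,d\mu_{\alpha,\beta}$ converges once $\epsilon$ is chosen with $p(\nu(\sigma_s)+\epsilon)<1$. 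In particular $g\in L^1(\mu_{\alpha,\beta})$, so the right-hand integral is finite.

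Now recall from the excerpt that $T_{\alpha,\beta}$ preserves $\mu_{\alpha,\beta}$ and is ergodic, and that $\mu_{\alpha,\beta}$ is mutually absolutely continuous with Lebesgue measure (its density is everywhere positive and $\mu_{\alpha,\beta}\le\frac{1}{\alpha\pi}\lambda$), so ``$\mu_{\alpha,\beta}$-a.e.'' and ``Lebesgue-a.e.'' coincide, matching the conclusion. As $(k_n)$ is $L^p$-good universal and $g\in L^p$, the limit $\ell(x)=\lim_N A_N g(x)$ exists a.e. To identify $\ell$ with $\int g\,d\mu_{\alpha,\beta}$ I would first treat bounded $g$ (so $g\in L^\infty\subset L^2$, $\mu_{\alpha,\beta}$ being a probability measure) via the spectral theorem. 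Because $T_{\alpha,\beta}$ need not be invertible, $U$ is only an isometry of $L^2(\mu_{\alpha,\beta})$; passing to the invertible natural extension (still ergodic, and leaving $\int g\,d\mu_{\alpha,\beta}$ unchanged) makes the corresponding operator unitary, and with its spectral resolution $E$ one has $A_N g-\int g\,d\mu_{\alpha,\beta}=\int_{|z|=1,\,z\neq1}F(N,z)\,dE_z\,g$, whence
\[
\Bigl\|A_N g-\int g\,d\mu_{\alpha,\beta}\Bigr\|_2^2=\int_{z\neq1}|F(N,z)|^2\,d\|E_z g\|^2\longrightarrow 0
\]
by (\ref{FNz_limite}) and dominated convergence, since $|F(N,z)|\le 1$; here ergodicity identifies the $z=1$ component of $g$ with the constant $\int g\,d\mu_{\alpha,\beta}$. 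Comparing this $L^2$ limit with the a.e. limit $\ell$ forces $\ell=\int g\,d\mu_{\alpha,\beta}$ a.e. for bounded $g$.

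Finally I would remove boundedness by truncation. For general $g\in L^p$ set $g_M=g\,\mathbf{1}_{\{|g|\le M\}}$; then $A_N g_M\to\int g_M\,d\mu_{\alpha,\beta}$ a.e. by the bounded case, while applying $L^p$-good universality to $|g-g_M|\le|g|\in L^p$ gives $A_N|g-g_M|\to\int|g-g_M|\,d\mu_{\alpha,\beta}$ a.e. Hence, a.e.,
\[
\limsup_N\Bigl|A_N g-\int g\,d\mu_{\alpha,\beta}\Bigr|\le 2\int|g-g_M|\,d\mu_{\alpha,\beta},
\]
and the right side tends to $0$ as $M\to\infty$ by dominated convergence. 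Letting $M$ run over the positive integers (a countable union of null sets is null), the left side, which is independent of $M$, is $0$ for a.e. $x$, which is precisely (\ref{lim_Lpgood_Hartman}). The hard part will be the identification step: making the spectral argument rigorous in spite of the non-invertibility of $T_{\alpha,\beta}$ (handled via the natural extension or a unitary dilation), and guaranteeing $g\in L^p$ throughout the strip $c<\sigma_s\le c'$, which is exactly what the growth bound $\nu(\sigma)\le 1+c-\sigma$ together with the monotonicity of $\nu$ is designed to secure.
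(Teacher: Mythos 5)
Your overall strategy---existence of the a.e.\ limit from $L^p$-good universality, identification of the limit through the spectral theorem and the Hartman property $F(N,z)\to 0$ for $z\neq 1$, ergodicity to pin down the invariant component as the constant $\int g\,d\mu_{\alpha,\beta}$, and a subsequence comparison of the a.e.\ and norm limits---is essentially the paper's. The paper runs the same spectral computation via Bochner--Herglotz, writing $\|A_N(g\circ T_{\alpha,\beta})-A_Ng\|_2^2=\int_{\mathbb{T}}(2-z-z^{-1})\,|F(N,z)|^2\,d\omega_g(z)$, so it establishes invariance of the limit and then invokes ergodicity, rather than your direct convergence to the projection $E_{\{1\}}g$; the two are interchangeable. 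It also reconciles pointwise and norm limits by extracting a rapidly convergent subsequence and applying Fatou, exactly as you do implicitly. Where you differ structurally is that the paper invokes Sawyer's maximal inequality $\|Mg\|_p\leq C\|g\|_p$ (for $p>1$) so that the averages of an arbitrary $g\in L^p$ are dominated by a fixed $L^p$ function, and dominated convergence then upgrades the a.e.\ limit to an $L^p$-norm limit in one step with no truncation; you instead prove the bounded case first and truncate.

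The genuine gap is in that truncation step. You assert that applying $L^p$-good universality to $|g-g_M|$ ``gives $A_N|g-g_M|\to\int|g-g_M|\,d\mu_{\alpha,\beta}$ a.e.'' Good universality only gives that the limit \emph{exists} a.e.; identifying it with the integral is exactly the statement you are in the middle of proving, and $|g-g_M|=|g|\mathbf{1}_{\{|g|>M\}}$ is unbounded, so your bounded case does not cover it. As written the step is circular. It is repairable: set $h_M:=\lim_N A_N|g-g_M|$ (which exists a.e.), observe by Fatou and $\mu_{\alpha,\beta}$-invariance that $\int h_M\,d\mu_{\alpha,\beta}\leq\liminf_N\int A_N|g-g_M|\,d\mu_{\alpha,\beta}=\int|g-g_M|\,d\mu_{\alpha,\beta}\to 0$, pass to a subsequence of $M$'s along which $h_M\to 0$ a.e., and run your $\limsup$ estimate with $h_M+\int|g-g_M|\,d\mu_{\alpha,\beta}$ in place of $2\int|g-g_M|\,d\mu_{\alpha,\beta}$. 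A smaller point worth flagging: your integrability check requires $p\bigl(\nu(\sigma_s)+\epsilon\bigr)<1$, i.e.\ $\nu(\sigma_s)<1/p$, which for $\sigma_s$ near $c$ constrains which exponents $p$ of good universality are admissible; the paper is equally silent on this compatibility, but it does not come for free from $\nu(\sigma_s)<1$ alone.
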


In the case $k_n= n \ (n=1,2, \ldots )$ 
Theorem \ref{theorem1}
appears in 
\cite{leeetal},
where it is  
shown, 
using (1), (2) and (3)
and contour integration,  for 
$$
l_{\alpha , \beta }(s) :=  {\alpha \over \pi}   
\int _{\mathbb{ R}} {f(s+ i \tau ) \over \alpha ^2 +
(\tau - \beta )^2}  d\tau ,
$$
that if $f$ has a pole at $s_0
=
\sigma_0 + i t_0$, 
\begin{equation}
\label{l_alphabeta_classicalzeta}
l_{\alpha , \beta} (s)  
=  
\left\{
\begin{array}{cc}
{\color{black} f(s+ \alpha +i \beta)} + 
\widetilde{B_m}(s_o) , 
& \mbox{\quad if}~  c < \Re (s) < \sigma _o , 
~ s \not= 
s_o- \alpha - i \beta ,\\
\displaystyle
\sum _{n=0}^m{a_{-n} \over (-2\alpha )^n}  ,
& \mbox{\quad if}~  c < \Re (s) < \sigma _o , 
~ s = s_o- \alpha - i \beta ,\\
f(s+ \alpha + i \beta ), 
&  \mbox{\quad if}~ \Re (s) > \sigma _o ,
\end{array}
\right. 
\end{equation}
or 
$l_{\alpha, \beta}(s) = 
f(s-\alpha + i \beta)$
if $s \in \mathbb{H}_c$ and
$f$ has no pole.
Here $\Re(s)$ denotes the real part of $s$
and
$$\widetilde{B_m}(s_0) 
=
\sum  _{n=1}^m 
\Bigl\{ 
{{\color{black}a_{-n}} \over i^n(\beta + i\alpha  -i(s-s_0) )^n } 
-{ {\color{black}a_{-n}} \over i^n(\beta - i\alpha  -i(s-s_0) )^n }
\Bigr\}.$$
Moreover, when $m=1$, 
we can extend Theorem \ref{theorem1} 
to the line ${\mathbb{L}}_{\sigma_0}$ 
by setting
$$
l_{\alpha , \beta} (\sigma _0 + it) 
=
f(\sigma _0 +\alpha  +i(t+ \beta ) ) 
- { a_{-1} \alpha \over \alpha ^2 +(t_0-t-\beta )^2 }.
$$ 
Applications will be given in the next section. 

As noted earlier Steuding 
\textcolor{black}{\cite{steuding2}}
showed that if $\zeta$ denotes the Riemann zeta function

\begin{equation}
\label{limit_Steuding_zeta}
\lim _{N\to \infty} {1 \over N}\sum _{n=0}^{N-1}
\zeta (s+ iT^{n}x) 
= {1\over \pi } \int _{\mathbb{ R}} 
{\zeta (s+i \tau ) \over 1+ \tau ^2} d\tau
\end{equation}
exists $\mu$ almost everywhere
(for $x$).  We can measure the 
stability of these averages another way.
Henceforth  $C$, possibly with subscripts, 
will denote a positive constant, 
though non-necessarily the same on each occasion.

\begin{theorem}
\label{theorem4}
Suppose $s=\sigma + i t$ 
with ${1\over 2} <  \sigma < 1$.  Let

$$
Y_N (\sigma ) = Y_N (\sigma , x )
:= {1 \over N}\sum _{n=0}^{N-1}\zeta (s+ iT^{n}x).  
\eqno (N=1,2, \ldots )
$$
Suppose $(N_k)_{k \geq 1}$ is a strictly 
increasing sequence 
of positive integers.  Then there exists an absolute $C>0$ 
such that
\begin{equation}
\label{zeta_supaverages_upperbound2}
\sum _{k=1}^{\infty} \|  \sup _{N_k \leq N < N_{k+1}}|Y_N(s ) - Y_{N_k}(s )| \  \|_2^2 \leq 
{ C \over \pi \sigma}
\Bigl| {\zeta (2\sigma ) \over 2}+ {\zeta ( {2\sigma -1}) \over 2\sigma -1} 
\Bigr|.
\end{equation}
\end{theorem}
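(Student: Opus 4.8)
The plan is to recognise the left-hand side as the square of the $L^2$-norm of an \emph{oscillation operator} for the ergodic averages of Boole's transformation, and then to reduce everything to a single mean-square estimate for $\zeta$ on the vertical line $\Re(s)=\sigma$. Set $f(x):=\zeta(s+ix)$ and recall that $T=T_{1,0}$ preserves the probability measure $\mu=\mu_{1,0}$ with density $\tfrac{1}{\pi(1+\tau^2)}$. Then $Y_N(s,x)=A_N f(x)$, where $A_N f=\tfrac1N\sum_{n=0}^{N-1}f\circ T^n$ is the usual ergodic average, so that $Y_N(s)-Y_{N_k}(s)=A_N f-A_{N_k}f$. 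Since $A_N$ and $A_{N_k}$ agree on constants, this difference is unchanged if $f$ is replaced by $g:=f-l(s)$, where $l(s):=\int_{\mathbb{R}}\zeta(s+i\tau)\,d\mu(\tau)$ is the mean furnished by Theorem~\ref{theorem1} (with $\alpha=1$, $\beta=0$). By Tonelli's theorem the left-hand side of \eqref{zeta_supaverages_upperbound2} is then exactly
$$\Big\|\,\Big(\sum_{k\ge1}\sup_{N_k\le N<N_{k+1}}\big|A_N g-A_{N_k}g\big|^2\Big)^{1/2}\,\Big\|_2^2=\big\|O(g)\big\|_2^2,$$
where $O$ denotes the oscillation operator along the sequence $(N_k)$.

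First I would invoke the oscillation inequality for ergodic averages of Jones, Kaufman, Rosenblatt and Wierdl: there is an absolute constant $C_0$, independent of the measure-preserving system, of $g\in L^2$, and of the strictly increasing sequence $(N_k)$, such that $\|O(g)\|_2\le C_0\|g\|_2$. This is precisely the tool that makes the final bound absolute and independent of $(N_k)$; applied here it yields
$$\sum_{k\ge1}\big\|\sup_{N_k\le N<N_{k+1}}|Y_N(s)-Y_{N_k}(s)|\big\|_2^2\le C_0^2\,\|g\|_2^2\le C_0^2\int_{\mathbb{R}}|\zeta(s+i\tau)|^2\,d\mu(\tau),$$
the last step using $\|g\|_2^2=\|f\|_2^2-|l(s)|^2\le\|f\|_2^2$.

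It then remains to prove the mean-square estimate
$$\int_{\mathbb{R}}|\zeta(s+i\tau)|^2\,d\mu(\tau)=\frac1\pi\int_{\mathbb{R}}\frac{|\zeta(\sigma+i(t+\tau))|^2}{1+\tau^2}\,d\tau\le\frac{C_1}{\pi\sigma}\Big|\frac{\zeta(2\sigma)}2+\frac{\zeta(2\sigma-1)}{2\sigma-1}\Big|,$$
uniformly in $t=\Im(s)$. Heuristically, expanding $\zeta$ as a Dirichlet series and using that the Cauchy density has Fourier transform $e^{-|u|}$ produces the diagonal term $\zeta(2\sigma)$ together with an off-diagonal contribution $2\sum_{a<b}a^{1-\sigma}b^{-1-\sigma}$; bounding $\sum_{b>a}b^{-1-\sigma}$ by $\sigma^{-1}a^{-\sigma}$ explains the factor $\sigma^{-1}$ and reduces matters to a series of the shape $\sum_a a^{1-2\sigma}$, whose regularised value is the analytic continuation $\zeta(2\sigma-1)$, with the weight $\tfrac1{2\sigma-1}$ reflecting the pole structure as $\sigma\to\tfrac12$.

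The hard part is making this last step rigorous. Because $\tfrac12<\sigma<1$ lies inside the critical strip, the Dirichlet series diverges and the off-diagonal series $\sum_a a^{1-2\sigma}$ diverges as well, so the formal computation above is not literally valid; the finite, negative number $\zeta(2\sigma-1)$ appears only through a genuine regularisation. I would therefore replace the heuristic by a classical mean value theorem for $|\zeta(\sigma+it)|^2$ on vertical lines with $\sigma>\tfrac12$ (for instance via the approximate functional equation or Titchmarsh's second-moment formulae), which supplies simultaneously the membership $g\in L^2(\mu)$ and the analytically continued secondary term. One must also verify uniformity in $t$: since the local second moment $\int_{\Theta}^{\Theta+1}|\zeta(\sigma+i\tau)|^2\,d\tau$ is $O(1)$ uniformly in $\Theta$ for fixed $\sigma>\tfrac12$, and the kernel $\tfrac1{\pi(1+(\tau-t)^2)}$ is summable, a dyadic decomposition of the $\tau$-integral absorbs the translation by $t$ into an absolute constant. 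Taking $C:=C_0^2C_1$ then gives \eqref{zeta_supaverages_upperbound2}.
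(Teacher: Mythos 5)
Your proposal follows the paper's proof essentially verbatim: the paper likewise applies the Jones--Kaufman--Rosenblatt--Wierdl oscillation inequality (its Lemma \ref{lemma6}) to $f(x)=\zeta(s+ix)$ and then bounds $\frac{C}{\pi}\int_{\mathbb{R}}|\zeta(\sigma+i\tau)|^{2}(1+\tau^{2})^{-1}\,d\tau$ by $\frac{C}{\pi\sigma}\bigl|\frac{\zeta(2\sigma)}{2}+\frac{\zeta(2\sigma-1)}{2\sigma-1}\bigr|$. The only real difference is in the second-moment step, which you sketch via Dirichlet series heuristics and classical mean-value theorems (and where you are more careful than the paper about uniformity in $t=\Im(s)$), whereas the paper simply quotes the exact evaluation $\int_{\mathbb{R}}\frac{|\zeta(\sigma+it)|^{2}}{1+t^{2}}\,dt=-\frac{1}{\sigma}\bigl(\frac{\zeta(2\sigma)}{2}+\frac{\zeta(2\sigma-1)}{2\sigma-1}\bigr)$ from Coffey \cite{coffey}, effectively working at $t=0$.
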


There are other ergodic theoretic measures of 
the stability of the Riemann zeta function.  
In this direction we 
consider the frequencies  associated to the 
sequences
$$
\Bigl(
{\zeta (s+ iT^{n}x)  \over n}
\Bigr)_{n \geq 1}. 
$$

Evidently
$$
\lim _{n\to \infty}{\zeta (s+ iT^{n}x)  \over n} = 0,
$$ 
for almost all $x$.  It is possible to make this more precise as follows.
Suppose $0< p < \infty $.  
For a sequence  of real numbers 
$\underline{x}=\{ x_n :  n\geq 1 \}$ 
set
\begin{equation}
\label{underline_x}
\|\underline {x} \| _{p, \infty} 
:= \bigl( 
\sup _{t>0}\, \{t^p \, \# \{n: |x_n|>t \}\}
\bigr)^{1\over p}.
\end{equation}


\begin{theorem}
\label{theorem5}  
(i) Suppose  ${1\over 2} <  \sigma < 1$.  Then
\begin{equation}
\label{limzeta_average}
\lim _{m\to \infty}{\# \{ n :{ |\zeta (\sigma + iT^nx )| \over n} \geq {1\over m } \} \over m } = \zeta ( \sigma + 1) - {2 \over \sigma (2- \sigma )},
\end{equation}
for almost all $x \in \mathbb{ R}$. and in $L^1( \mu )$ norm.

(ii) Further
\begin{equation}
\label{zeta_average_upperbound1}
\left \| \left \| \left \{ { |\zeta (\sigma + iT^nx )| \over n} : n \geq 1  \right \} \right \| _{1, \infty }  \right \| _1< \infty .
\end{equation}
\end{theorem}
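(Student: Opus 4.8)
The plan is to treat both parts of Theorem \ref{theorem5} as statements about the nonnegative stationary sequence $g(T^{n}x)$, where $g(\tau):=|\zeta(\sigma+i\tau)|$, $T=T_{1,0}$ and $\mu=\mu_{1,0}$. The one quantitative fact driving everything is square integrability: on the strip $\tfrac12<\sigma<1$ the convexity bound $|\zeta(\sigma+i\tau)|\ll_{\epsilon}|\tau|^{(1-\sigma)/2+\epsilon}$ has exponent $<\tfrac14$, whence $\int_{\mathbb R}|\zeta(\sigma+i\tau)|^{2}\,d\mu(\tau)<\infty$, i.e. $g\in L^{2}(\mu)\subseteq L^{1}(\mu)$. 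The strip $\tfrac12<\sigma<1$ is moreover exactly the region where $\zeta$ meets the hypotheses of Theorem \ref{theorem1} and where the closed form \eqref{l_alphabeta_classicalzeta} is available, the pole of $\zeta$ at $1$ lying to the right of the sampling line.

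For part (i), I would set $t=1/m$ and rewrite the quantity as $t\,\#\{n:g(T^{n}x)\ge tn\}$, then show it converges as $t\to0^{+}$ to $\int_{0}^{\infty}\mu(\{g>v\})\,dv=\int_{\mathbb R}g\,d\mu$ by the layer-cake formula. The mechanism is a distributional (Marcinkiewicz--Zygmund type) upgrade of Birkhoff's theorem: for each fixed $v$, ergodicity of $T$ gives $\tfrac1N\#\{n\le N:g(T^{n}x)>v\}\to\mu(\{g>v\})$ almost everywhere, and summing these block counts over the ranges $n/m\in[v,v+\Delta)$ (each block of length $\approx m\Delta$ contributing $\approx m\Delta\,\mu(\{g>v\})$ indices) realises the counting limit as a Riemann sum for $\int_{0}^{\infty}\mu(\{g>v\})\,dv$. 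The closed form asserted in \eqref{limzeta_average} is then read off from the contour evaluation \eqref{l_alphabeta_classicalzeta} in the case $\alpha=1,\beta=0$ with the simple pole of $\zeta$ at $s_{0}=1$ (pole order one, residue $a_{-1}=1$), giving $\zeta(\sigma+1)-\tfrac{2}{\sigma(2-\sigma)}$. Convergence in $L^{1}(\mu)$ follows by dominating the counting averages using the $L^{2}$ bound on $g$.

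For part (ii), writing $N(t)=\#\{n:g(T^{n}x)>tn\}$, the claim is that the weak-$L^{1}$ quasinorm \eqref{underline_x} of $(g(T^{n}x)/n)_{n}$ lies in $L^{1}(\mu)$, i.e. $\int_{\mathbb R}\sup_{t>0}\bigl(t\,N(t)\bigr)\,d\mu<\infty$. I would split the supremum at a threshold $t_{0}$. On $t\ge t_{0}$ a dyadic decomposition together with Markov's inequality and the second-moment estimate $\mathbb{E}_{\mu}[N(t)]=\sum_{n}\mu(\{g>tn\})\le t^{-2}\,\|g\|_{2}^{2}\sum_{n}n^{-2}$ yields a convergent geometric series for $\mathbb{E}_{\mu}[\sup_{t\ge t_{0}}t\,N(t)]$. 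On $t<t_{0}$, $t\,N(t)$ concentrates near $\int_{\mathbb R}g\,d\mu$ by part (i), and the fluctuation of the supremum is controlled by the $L^{2}$ maximal ergodic theorem applied to $g$, again through $g\in L^{2}(\mu)$.

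I expect the main obstacle to be the distributional upgrade in part (i): Birkhoff's theorem only handles a fixed threshold, whereas here the threshold $n/m$ is coupled to the summation index, so one must pass from almost-everywhere convergence of the empirical tail distributions to almost-everywhere convergence of the coupled count, and control uniformly in $x$ the contribution of the large values of $g$ (the regime $v$ large), which is precisely where $g\in L^{2}(\mu)$ enters. For part (ii) the analogous difficulty is the uniform-in-$t$ maximal bound for small $t$, where the naive union bound over scales diverges and the $L^{2}$ maximal ergodic inequality is indispensable.
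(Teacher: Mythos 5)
There is a genuine gap, and it sits at the final step of your part (i). Your layer-cake/blocking argument (which is in effect a re-derivation of the Assani lemma that the paper invokes as Lemma \ref{lemma8}(i)) produces as the limit the quantity $\int_{\mathbb{R}}|\zeta(\sigma+i\tau)|\,d\mu_{1,0}(\tau)$, the Cauchy mean of the \emph{modulus} of $\zeta$. The contour evaluation \eqref{l_alphabeta_classicalzeta}, from which you propose to ``read off'' the constant, computes the Cauchy mean of $\zeta$ itself, namely $\tfrac{1}{\pi}\int_{\mathbb{R}}\zeta(\sigma+i\tau)(1+\tau^{2})^{-1}\,d\tau=\zeta(\sigma+1)-\tfrac{2}{\sigma(2-\sigma)}$; that evaluation uses holomorphy and simply does not apply to $|\zeta|$. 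The two quantities really are different: the first is positive, whereas $\zeta(\sigma+1)-2/(\sigma(2-\sigma))$ is \emph{negative} throughout $(\tfrac12,1)$ (as $\sigma\to 1^{-}$ it tends to $\pi^{2}/6-2<0$, and as $\sigma\to\tfrac12^{+}$ to $\zeta(3/2)-8/3<0$), while the left-hand side of \eqref{limzeta_average} is a limit of nonnegative numbers. So the identification cannot be made, and no correct argument yields the stated closed form. You have inherited this problem rather than created it: the paper's own two-line proof applies Lemma \ref{lemma8}(i) to $|\zeta(\sigma+i\cdot)|$, then mislabels the resulting limit as the second moment $\tfrac{1}{\pi}\int|\zeta(\sigma+i\tau)|^{2}(1+\tau^{2})^{-1}\,d\tau$ and equates \emph{that} with the first-moment formula from Steuding. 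A careful version of either argument gives $\int_{\mathbb{R}}|\zeta(\sigma+i\tau)|\,d\mu_{1,0}(\tau)$, which admits no such elementary evaluation; the rest of your part (i) would then be salvageable with that value in place of the one asserted.

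As for the route itself: the paper deduces both parts by citing Assani (Lemma \ref{lemma8}), the only analytic input being that $\zeta(\sigma+i\cdot)\in L\log_{+}L(\mu_{1,0})$, which follows from the convexity bound; your observation that it is in fact in $L^{2}(\mu_{1,0})$ gives this a fortiori. Your blocking argument for (i) is the right mechanism (it is essentially how Assani's theorem is proved), provided you carry out the uniform control of the large-$v$ tail that you flag. For (ii), your dyadic second-moment estimate correctly handles $\sup_{t\geq t_{0}}tN(t)$, but the regime $t<t_{0}$ is exactly the content of Assani's weak-type result, and ``controlled by the $L^{2}$ maximal ergodic theorem'' is an assertion rather than an argument: there is no immediate pointwise domination of $\sup_{0<t<t_{0}}tN(t)$ by the ergodic maximal function of $g$ (a naive attempt loses a factor of the gaps $n_{k}/k$ in the exceptional set), and supplying one requires the stopping-time analysis of Assani's paper. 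Citing Lemma \ref{lemma8}(ii), as the paper does, is the efficient way to close part (ii).
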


Moreover,
\begin{theorem}
\label{theorem1.4}
Suppose $s=\sigma + it $ with 
${1\over 2} <  \sigma < 1$.   
Suppose $\kappa = (k_n)_{n\geq 1}$ 
is  $L^p$- good universal for $p > 2$ 
and let
\begin{equation}
\label{}
R_N(s, x , \kappa ) 
:= {1\over N} \sum _{n=0}^{N-1}\zeta (s+ iT^{k_n}x).  
\qquad \qquad (N=1,2, \ldots ).
\end{equation}
Then

\noindent
(i) there exists a 
constant $C>0$
such that
\begin{equation}
\label{}
\sum _{N=1}^{\infty} 
\bigl\|  R_{N+1}(s,.,\kappa ) - R_{N}(s,.,\kappa ) \  
\bigr\|_2^2 
\leq 
{ C \over \pi \sigma} \
\Bigl| {\zeta (2\sigma ) \over 2}+ 
{\zeta ( {2\sigma -1}) \over 2\sigma -1} 
\Bigr| ,
\end{equation}

\noindent
(ii) there exists $C>0$
\begin{equation}
\label{}
\mu _{\alpha , \beta} \Bigl( 
\Bigl\{ x : \sum _{N=1}^{\infty} 
|  R_{N+1}(s,x,\kappa ) - R_{N}(s,x,\kappa ) |^2 
\geq \lambda 
\Bigr\}  
\Bigr)
\leq 
{C \over \lambda ^2}  
\Bigl| {\zeta (2\sigma ) \over 2}
+ {\zeta ( {2\sigma -1}) \over 2\sigma -1} 
\Bigr|.
\end{equation}
\end{theorem}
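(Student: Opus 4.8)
The plan is to reduce both parts to a single second–moment estimate for $\zeta$ against the invariant Cauchy measure, and then to exploit the telescoping structure of consecutive Ces\`aro averages. Write $a_n = a_n(x) := \zeta(s + iT^{k_n}x)$ and $S_N = \sum_{n=0}^{N-1} a_n$, so that $R_N = S_N/N$. A one–line computation with $S_{N+1} = S_N + a_N$ gives the telescoping identity
$$
R_{N+1} - R_N = \frac{S_{N+1}}{N+1} - \frac{S_N}{N} = \frac{a_N - R_N}{N+1},
$$
which is the engine of the whole argument: it expresses each consecutive difference as a single sampled value $a_N$ minus the running average, damped by $1/(N+1)$. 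Note that this identity, and everything below, uses only that the underlying map preserves $\mu_{\alpha,\beta}$; the $L^p$-good universality hypothesis ($p>2$) serves to make the limit of the $R_N$ meaningful and, since $\mu_{\alpha,\beta}$ is a probability measure and $L^p(\mu_{\alpha,\beta})\subseteq L^2(\mu_{\alpha,\beta})$ for $p>2$, to place $\zeta(s+i\cdot)$ in $L^2$.

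First I would establish the basic second–moment bound. Because the transformation preserves $\mu_{\alpha,\beta}$, each iterate pushes $\mu_{\alpha,\beta}$ forward to itself, so for \emph{every} $n$
$$
\|a_n\|_2^2 = \int_{\mathbb{R}} |\zeta(s + i\tau)|^2 \, d\mu_{\alpha,\beta}(\tau) =: M^2,
$$
a quantity independent of $n$. The main analytic input is the estimation of $M^2$. Writing $|\zeta(s+i\tau)|^2 = \zeta(\sigma+i(t+\tau))\,\zeta(\sigma-i(t+\tau))$ and expanding in Dirichlet series for $\Re s > 1$, the $\tau$–integral of each term against the Cauchy kernel is evaluated via
$$
\int_{\mathbb{R}} \frac{(n/m)^{i\tau}}{\alpha^2 + (\tau-\beta)^2}\, d\tau = \frac{\pi}{\alpha}\,(n/m)^{i\beta}\, e^{-\alpha|\log(n/m)|};
$$
the phases have modulus one and are discarded in the upper bound, and after analytic continuation into the strip $1/2 < \sigma < 1$ the resulting double sum $\sum_{m,n}(mn)^{-\sigma} e^{-\alpha|\log(n/m)|}$ produces precisely the combination $\tfrac{\zeta(2\sigma)}{2} + \tfrac{\zeta(2\sigma-1)}{2\sigma-1}$ up to an absolute constant. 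This is the same mean–square estimate underlying the right–hand side of Theorem \ref{theorem4}, and I would quote it in the form
$$
M^2 \le \frac{C}{\pi\sigma} \Bigl| \frac{\zeta(2\sigma)}{2} + \frac{\zeta(2\sigma-1)}{2\sigma-1} \Bigr|.
$$

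For part (i) I would combine the telescoping identity with the triangle inequality. Since $\|R_N\|_2 \le \frac{1}{N}\sum_{n=0}^{N-1}\|a_n\|_2 = M$, the identity gives $\|R_{N+1}-R_N\|_2 \le (\|a_N\|_2 + \|R_N\|_2)/(N+1) \le 2M/(N+1)$, whence
$$
\sum_{N=1}^{\infty} \|R_{N+1}-R_N\|_2^2 \le 4M^2 \sum_{N=1}^{\infty} \frac{1}{(N+1)^2} \le C' M^2,
$$
and absorbing $C'$ together with the constant from the mean–square bound into $C$ yields the claim. Part (ii) is then immediate: the nonnegative function $g(x) = \sum_{N\ge 1} |R_{N+1}(s,x,\kappa) - R_N(s,x,\kappa)|^2$ satisfies $\int g \, d\mu_{\alpha,\beta} = \sum_N \|R_{N+1}-R_N\|_2^2$ by Tonelli, which is controlled by part (i), and Chebyshev's inequality applied to $g$ delivers the stated tail bound.

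The step I expect to be the main obstacle is the uniform evaluation of $M^2$. The Dirichlet–series manipulation converges only for $\sigma > 1$, so the genuine work is the analytic continuation of the double sum into the critical strip and the verification that the Cauchy–kernel average of $|\zeta(\sigma+i\cdot)|^2$ is bounded uniformly in $t = \Im s$: the kernel is centred at $\beta$, and the substitution $u = t+\tau$ effectively recentres the measure, so one must check that this translation does not spoil the bound. Everything else — the telescoping identity, measure–preservation, the triangle inequality, and Chebyshev — is routine once the estimate for $M^2$ is in hand.
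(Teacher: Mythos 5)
Your part (i) is correct and takes a genuinely more elementary route than the paper. The paper deduces (i) from the variational inequality of Nair and Weber (Lemma \ref{lemma4.4}) combined with Sawyer's theorem for distributive operators (Lemma \ref{lemma4.5}): good universality gives finiteness of the maximal function almost everywhere, Sawyer upgrades this to a weak-type bound for the maximal operator, Lemma \ref{lemma4.4} transfers that to the $2$-variation $V_2f=\bigl(\sum_N|R_{N+1}-R_N|^2\bigr)^{1/2}$, and the mean-square estimate \eqref{CsurPi} finishes. Your telescoping identity $R_{N+1}-R_N=(a_N-R_N)/(N+1)$ bypasses all of this: for \emph{consecutive} indices the differences are automatically $O(M/N)$ in $L^2$ by measure preservation and the triangle inequality alone, so $\sum_N\|R_{N+1}-R_N\|_2^2\le 4M^2\sum_N(N+1)^{-2}$, and neither good universality nor any maximal or variational inequality is needed for (i). (The heavier machinery is what one needs for differences along an arbitrary subsequence $(N_k)$, as in Theorem \ref{theorem4} and Theorem \ref{theorem4.14}; for $N_k=k$ your argument suffices.) Your caveat about the dependence of $M^2$ on $t=\Im s$ is also well taken: the paper's evaluation of $\int_{\mathbb{R}}|\zeta(\sigma+i\tau)|^2(1+\tau^2)^{-1}\,d\tau$ tacitly takes $t=0$, and the constant $C$ has to absorb the recentring.

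Part (ii) as you have written it has a gap. Markov's inequality applied to the nonnegative function $g=\sum_N|R_{N+1}-R_N|^2$ gives $\mu_{\alpha,\beta}(\{g\ge\lambda\})\le\lambda^{-1}\int g\,d\mu_{\alpha,\beta}$, i.e. a bound of order $C/\lambda$, whereas the statement asserts $C/\lambda^2$; for $\lambda>1$ the former does not imply the latter, so Chebyshev at the first-moment level does not ``deliver the stated tail bound.'' To reach a genuine $\lambda^{-2}$ decay one needs either second-moment information on $g$ (equivalently $L^4$ control of $\zeta(s+i\cdot)$ against $\mu_{\alpha,\beta}$, which would replace the right-hand side by a fourth-moment quantity) or a weak-type $(2,2)$ inequality for the square function $V_2=g^{1/2}$ --- the latter being exactly what the paper extracts from Sawyer's theorem (Lemma \ref{lemma4.5} with $p=2$), and the one place where the good-universality hypothesis with $p>2$ genuinely enters. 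Note moreover that even the weak $(2,2)$ bound controls $\mu_{\alpha,\beta}(\{V_2f\ge\lambda\})$ by $C\lambda^{-2}\|f\|_2^2$, i.e. the event $\{\sum_N|R_{N+1}-R_N|^2\ge\lambda^2\}$ rather than the event as literally printed; so you should either prove the weak $(2,2)$ inequality for $V_2$ and state which normalisation of $\lambda$ you are using, or accept the weaker exponent $1$ that your Markov argument actually yields.
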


We will  provide additional information  
about various special families of sequences 
in {\color{black}S}ection {\color{black}\ref{S4}} and {\color{black}S}ection \ref{S9}. 
On the other hand if we consider  
the $L^1$-norm we have the 
following theorem in the opposite direction.


\begin{theorem}
\label{theorem1.5}
Suppose $\kappa = (k_n)_{n\geq 1}$ is Hartman uniformly distributed and $L^p$- good universal for fixed 
$p  \geq \ 1$ and $\sigma \in ({1\over 2}, 1)$.  Then
\begin{equation}
\label{equation1.5}
\sum _{N\geq 1} 
\bigl|R_{N+1}(\sigma, x, \kappa )
- R_{N}(\sigma, x , \kappa )
\bigr|
 \ = \ + \infty ,
\end{equation}
almost everywhere in $x$ with respect to 
Lebesgue measure.
\end{theorem}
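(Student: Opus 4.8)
The plan is to reduce \eqref{equation1.5} to an elementary Tauberian fact about Ces\`aro limits, after telescoping the increments $R_{N+1}-R_N$ and identifying the relevant averages. Write $a_n := \zeta(\sigma + iT^{k_n}x)$, so that $R_N = R_N(\sigma,x,\kappa) = \frac1N\sum_{n=0}^{N-1}a_n$. A one-line computation gives the identity
\[
R_{N+1}-R_N \ = \ \frac{a_N - R_N}{N+1},
\]
so the series in \eqref{equation1.5} is exactly $\sum_{N\ge 1}\frac{|a_N - R_N|}{N+1}$. Thus everything comes down to showing that the nonnegative numbers $c_N := |a_N - R_N|$ are large on average.

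Second, I would identify the almost-everywhere limit of $R_N$. Since $\zeta$ satisfies hypotheses (1)--(3) of Theorem \ref{theorem1} with its pole at $s_0=1$, and $\kappa$ is $L^p$-good universal and Hartman uniformly distributed, Theorem \ref{theorem1} (with $\alpha=1$, $\beta=0$, $T=T_{1,0}$, $\mu=\mu_{1,0}$) gives, for a.e.\ $x$,
\[
R_N \ \longrightarrow \ \ell := \frac1\pi\int_{\mathbb R}\frac{\zeta(\sigma + i\tau)}{1+\tau^2}\,d\tau \ = \ \zeta(\sigma+1) - \frac{2}{\sigma(2-\sigma)},
\]
the explicit value coming from \eqref{l_alphabeta_classicalzeta}. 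Next I would apply good universality to the real nonnegative function $h(y) := |\zeta(\sigma + iy) - \ell|$, which lies in $L^p(\mu)$ because $\zeta(\sigma + i\,\cdot\,)$ does. Using that $\kappa$ is Hartman uniformly distributed and that $T_{1,0}$ is ergodic for $\mu_{1,0}$, the good-universal limit equals the space average (the identification behind \eqref{FNz_limite} and \eqref{limit_Steuding_zeta}), so
\[
\frac1N\sum_{n=0}^{N-1} h(T^{k_n}x) \ \longrightarrow \ I := \frac1\pi\int_{\mathbb R}\frac{|\zeta(\sigma + i\tau)-\ell|}{1+\tau^2}\,d\tau
\]
for a.e.\ $x$. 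Here $I>0$, since $\tau\mapsto \zeta(\sigma+i\tau)$ is the restriction of a nonconstant holomorphic function, so $\zeta(\sigma + i\tau)=\ell$ only on a discrete set of $\tau$ and the integrand is positive a.e.\ against the positive weight.

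Third, and this is the crux, I would transfer from the average of $h(T^{k_n}x)=|a_n-\ell|$ to the average of $c_n=|a_n-R_n|$. Fix $\epsilon>0$ and, using $R_n\to\ell$ a.e., choose $N_1=N_1(x)$ with $|R_n-\ell|<\epsilon$ for $n\ge N_1$; then $\bigl||a_n-R_n|-|a_n-\ell|\bigr|\le\epsilon$ for $n\ge N_1$, while the finitely many earlier terms are harmless in a Ces\`aro average. Letting $\epsilon\downarrow 0$ yields
\[
\frac1N\sum_{n=0}^{N-1} c_n \ \longrightarrow \ I > 0 \qquad \text{for a.e. } x.
\]
Finally I would invoke the elementary lemma: if $c_n\ge 0$ and $\frac1N\sum_{n<N}c_n\to I>0$, then $\sum_n \frac{c_n}{n+1}=\infty$. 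Writing $C_N=\sum_{n<N}c_n$, one has $C_N\sim IN$, so on each dyadic block $[M,2M)$ one gets $\sum_{M\le n<2M}\frac{c_n}{n+1}\ge \frac{C_{2M}-C_M}{2M}\to \frac I2$, and summing over infinitely many blocks forces divergence. Applying this with $c_N=|a_N-R_N|$ gives $\sum_N |R_{N+1}-R_N|=+\infty$ a.e., as claimed.

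The main obstacle I anticipate is the combined second/third step: securing that the Ces\`aro averages of $|a_n-R_n|$ converge to a \emph{strictly positive} constant. This needs both the identification of the good-universal limit of the modulus $h$ with its $\mu$-mean --- which is not covered by Theorem \ref{theorem1} itself, since $h$ is not meromorphic, and so relies on the Hartman property together with the ergodicity of $T_{1,0}$ --- and the positivity $I>0$, i.e.\ the genuine nonconstancy of $\zeta$ on the vertical line $\Re(s)=\sigma$. The telescoping identity and the final dyadic-blocking argument are routine by comparison.
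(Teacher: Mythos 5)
Your argument is correct, and it is genuinely more self-contained than what the paper does. The paper's proof of Theorem \ref{theorem1.5} is a one-line reduction: it invokes the general Lemma \ref{lemma4.15} (any non-constant $f$ in an ergodic system, averaged along a Hartman, $L^p$-good universal sequence, has divergent $\ell^1$-variation of its Ces\`aro means), a lemma which is stated in Section \ref{S4} without proof, and then specialises to $f(x)=\zeta(\sigma+ix)$. Your telescoping identity $R_{N+1}-R_N=(a_N-R_N)/(N+1)$, the identification of the two limits $R_N\to\ell$ and $\frac1N\sum_{n<N}|a_n-\ell|\to I$, the positivity $I>0$ from non-constancy of $\zeta$ on the line $\Re(s)=\sigma$, and the dyadic-block divergence lemma together constitute precisely the missing proof of that lemma (in fact your argument proves it for any non-constant $g\in L^2(\mu_{\alpha,\beta})$, since the only role of ergodicity and the Hartman property is to force the good-universal limit to equal the space average, exactly as in Section \ref{S3}). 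Two points you flag are indeed the only ones needing care, and both are fine: (i) applying the limit-identification argument to the non-meromorphic modulus $h=|\zeta(\sigma+i\cdot)-\ell|$ is legitimate because the Section \ref{S3} argument never uses meromorphy, only $h\in L^p(\mu_{1,0})$, and since $h\le|\zeta(\sigma+i\cdot)|+|\ell|$ this is exactly the integrability already required to form $R_N$ at all; (ii) positivity of $I$ follows even more simply than via discreteness of the level set --- if $\zeta$ were constant on $\Re(s)=\sigma$ the identity theorem would make it constant everywhere. So your proposal fills a genuine gap in the paper's exposition rather than duplicating it.
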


It would be interesting to get good bounds almost everywhere in $x$ for 
$$
\sum _{N=1} ^J 
\bigl|R_{N+1}( \sigma , x , \kappa )-
R_{N}( \sigma , x , \kappa )
\bigr|, 
\qquad \qquad (J=1,2 \ldots )
$$
even for specific cases of  $\kappa$.

\bigskip 

The paper is organized as follows: in {\color{black}S}ection 
\ref{S2} new characterisations of the 
(extended) 
Lindel\"of Hypothesis are obtained for the Riemann zeta function, and
for Dirichlet $L${\color{black} -functions}, 
Dedekind zeta functions of number fields,
Hurwitz zeta functions, 
as applications of Theorem
\ref{theorem1}. The sequences  {\color{black} here are} 
assumed to be Hartman uniformly distributed and
$L^p$-good universal. Replacing this assumption by
being Stoltz leads to similar limit
theorems, which are reported
in {\color{black}S}ection \ref{S8}.
The role played by sublinear sequences 
having controlled growth is investigated
in {\color{black}S}ection \ref{S9} for the Riemann  zeta function. 
Section \ref{S7} contains some examples of 
$L^p$-good universal sequences 
and Hartman uniformly distributed sequences.
Comparison between the dynamical and probabilistic 
models of Weber  \cite{weber}, resp. Srichan \cite{srichan}, is done
in {\color{black}S}ection \ref{S6}.

\section{Applications of Theorem \ref{theorem1}}
\label{S2}

From now, 
we assume that 
$(k_n)_{n\geq 1}$ satisfies the  hypothesis 
in Theorem \ref{theorem1}. 

In the sequel, for a function $f$ 
denote $f^{(0)} = f$ and $f^{(k)}$
the $k$-th derivative of $f$ and set
$$
P_k(s) := {(-1)^kk!\over i^{k+1}}\left ({1\over (\beta +i\alpha -i(s-1))^{k+1}} {\color{black}-} {1\over (\beta - i\alpha -i(s-1))^{k+1}} \right )
$$
for any non-negative integer $k$.

\subsection{The Riemann zeta function}
\label{S2.1}

\begin{theorem}
\label{theorem9}
Suppose $(k_n)_{n\geq 1}$ is 
$L^p$-good universal for some $p\in [1, \infty ]$ 
and Hartman uniformly distributed.
Then for any $k \geq 0$ and 
$s \in \mathbb{ H}_{-{1 \over 2}} \backslash \mathbb{ L}_1$ 
we have
\begin{equation}
\label{zetak_Lpgoodunivhartman}
\lim _{N \to \infty} {1\over N} \sum _{n=0}^{N-1} \zeta ^{(k)} (s+iT^{k_n}_{\alpha , \beta }( x)) 
= {\alpha \over \pi } \int _{\mathbb{ R}}
{\zeta ^{(k)} (s+ i \tau ) \over \alpha ^2 +(\tau - \beta )^2}  d\tau  
\end{equation}
for almost all $x$ in $\mathbb{ R}$.
\end{theorem}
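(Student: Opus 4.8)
The plan is to deduce Theorem~\ref{theorem9} as a direct specialization of Theorem~\ref{theorem1}, taking $f = \zeta^{(k)}$, $c = -\tfrac{1}{2}$, and $s_0 = 1$. Since $\zeta^{(k)}$ is meromorphic on all of $\mathbb{C}$ with its only singularity a pole at $s = 1$, it is meromorphic on $\mathbb{H}_{-1/2}$ with exactly one pole there; this disposes of the meromorphy requirement and fixes $\sigma_0 = 1$, $t_0 = 0$. The entire task thus reduces to verifying conditions (1), (2) and (3) of Theorem~\ref{theorem1} for $f = \zeta^{(k)}$ on $\mathbb{H}_{-1/2}$, after which \eqref{lim_Lpgood_Hartman} yields \eqref{zetak_Lpgoodunivhartman} verbatim for $s \in \mathbb{H}_{-1/2} \setminus \mathbb{L}_1$.

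For condition (1) I would fix any $c' > 1$ and use the absolutely convergent series $\zeta^{(k)}(s) = (-1)^k \sum_{n \geq 2} (\log n)^k n^{-s}$, valid for $\Re(s) > 1$, to bound $|\zeta^{(k)}(\sigma + it)|$ uniformly by $\sum_{n \geq 2} (\log n)^k n^{-c'} =: K < \infty$ on the half-plane $\{\sigma > c'\}$, independently of $t$. Condition (3) is immediate from the Laurent expansion of $\zeta$ at $s = 1$: since $\zeta(s) = (s-1)^{-1} + \gamma + O(s-1)$, the $k$-th derivative has a pole of order $m = k+1$ at $s_0 = 1$ with principal part beginning $(-1)^k k!\,(s-1)^{-(k+1)}$, so the Laurent data required in (3) is explicit.

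The substance of the proof is condition (2), the Lindel\"of-type growth bound. Here I would invoke the functional equation $\zeta(s) = \chi(s)\zeta(1-s)$, where $\chi(s) = 2^s \pi^{s-1}\sin(\pi s/2)\Gamma(1-s)$, together with Stirling's formula, giving $|\chi(\sigma + it)| \asymp (|t|/2\pi)^{1/2 - \sigma}$ as $|t| \to \infty$. Combined with the standard convexity (Phragm\'en--Lindel\"of) bounds in the critical strip, this shows that the growth exponent $\mu(\sigma) = \limsup_{|t| \to \infty} \log|\zeta(\sigma + it)| / \log|t|$ is non-increasing, equals $0$ for $\sigma > 1$, and equals $\tfrac{1}{2} - \sigma$ for $\sigma \leq 0$. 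Taking $\nu = \mu$ (or any non-increasing majorant), one checks that in a neighbourhood of $c = -\tfrac{1}{2}$ one has $\nu(\sigma) = \tfrac{1}{2} - \sigma = 1 + c - \sigma$, so the inequality $\nu(\sigma) \leq 1 + c - \sigma$ holds, unconditionally, since this regime lies in $\sigma \leq 0$ where the exponent is exact.

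The main obstacle is propagating this bound from $\zeta$ to its derivatives $\zeta^{(k)}$ without enlarging the polynomial order in $|t|$. I expect to recover $\zeta^{(k)}(s)$ from $\zeta$ via Cauchy's integral formula over a circle of fixed radius about $s$, so that $|\zeta^{(k)}(s)| \ll_k \max|\zeta|$ on that circle; as the radius is bounded, the exponent $\nu(\sigma)$ is left unchanged and only an implied constant and possible logarithmic factors are introduced. Because hypothesis (2) already tolerates an arbitrary $|t|^{\epsilon}$ factor, these logarithmic losses are harmless, and $\zeta^{(k)}$ inherits the same admissible non-increasing $\nu$ as $\zeta$. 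With (1), (2) and (3) established, Theorem~\ref{theorem1} applies directly and the proof is complete.
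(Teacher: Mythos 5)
Your proposal is correct and follows essentially the same route as the paper: verify hypotheses (1)--(3) of Theorem~\ref{theorem1} for $f=\zeta^{(k)}$ (Dirichlet series for (1), Laurent expansion at $s=1$ giving a pole of order $m=k+1$ for (3), the convexity exponent $\mu(\sigma)$ for (2)) and then apply Theorem~\ref{theorem1} with $c=-\tfrac{1}{2}$, $s_0=1$, the only difference being that the paper simply cites Titchmarsh, pp.~95--96, for the growth bound on $\zeta^{(k)}$ rather than rederiving it. One small correction to your derivation of (2): with a circle of \emph{fixed} radius $r$ the Cauchy estimate for $\zeta^{(k)}$ is governed by $\mu(\sigma-r)$, which for $\sigma\leq 0$ costs a genuine power $|t|^{r}$ rather than logarithmic factors, so you should take the radius to depend on $\epsilon$ (or shrink with $|t|$) --- harmless here, since condition (2) only requires the bound up to an arbitrary $|t|^{\epsilon}$.
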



Denote the right hand side of this limit by {\color{black}$l^{(k)}_{\alpha , \beta }$.  Then} 
\begin{equation}
\label{lk_alphabeta_zeta}
l^{(k)}_{\alpha , \beta } 
=  
\left\{
\begin{array}{cc}
\zeta ^{(k)}(s+ \alpha +i \beta) + P_k(s) , 
& \mbox{if}~  -{1\over 2} < \Re (s) < 1 , 
~ s \not= 1- \alpha - i \beta ,\\
(-1)^k\gamma _k- {{k!} \over (2\alpha )^{k+1}}  ,
& \mbox{if}~  -{1\over 2} < \Re (s) < 1 , ~
s = 1- \alpha - i \beta , \\
\zeta ^{(k)} (s+ \alpha + i \beta ), 
&  \mbox{if}~ \Re (s) > 1, 
\end{array}
\right. 
\end{equation}
where
\begin{equation}
\label{gammak_alphabeta_zeta}
\gamma _k = 
\lim _{N\to \infty} 
\left (  \sum _{n=1}^N {\log ^k n \over n} - 
{\log ^{k+1} N \over k+1} \right ).
\end{equation}
In the case $k=0$ 
we can extend the result 
to the line $\mathbb{ L}_1$ by setting
\begin{equation}
\label{lzero_alphabeta_zeta}
l^{(0)}_{\alpha , \beta }
=
l^{(0)}_{\alpha , \beta } (1+it) = 
\zeta ^{(0)} (1+\alpha +i(t+ \beta )) - {\alpha \over \alpha ^2 +(t^2+\beta ^2)}.
\end{equation}

If we set $k_n=n \ (n=1,2, \ldots )$ 
{\color{black}
Theorem} \ref{theorem9} 
appears in \cite{leeetal} 
and this is 
in the case $\alpha =1, \beta = 0$ 
and $k=1$ 
in 
\textcolor{black}{\cite{steuding2}}.

\textcolor{black}
{We also mention here that the case 
$k_n = n \,\,(n = 1,2, \ldots )$  
and $T=T_{{1,0}}$ of 
Theorems \ref{theorem11}
and 
\ref{theorem12}
to follow, 
appeared in 
\textcolor{black}{\cite{steuding2}},
of Theorem 
\ref{theorem12}
appeared first in 
\cite{balazardsaiasyor}
and of Theorems 
\ref{theorem14}
and 
\ref{theorem15}
appeared in 
\cite{leeetal}.}

\begin{proof}
Note that for 
$k\geq 0$, we know that $\zeta ^{(k)}$ is 
meromorphic and has an absolutely convergent 
Dirichlet {\color{black} series} for $\Re (s) > 1$.  
Thus the
condition (1) of Theorem \ref{theorem1} 
is satisfied for $c' > 1$.  
This is because the Laurent expansion of 
$\zeta$ near the pole at $s=1$ is known  
\cite{briggs} 
and then that the Laurent expansion of
$\zeta ^{(k)}$
has the form
$$
\zeta ^{(k)}(s) = 
{ (-1)^k \, k! \over (s-1)^{k+1}} + 
(-1)^k \gamma _k + 
\sum _{n=k}^{\infty }{(-1)^{n+1} \gamma _n \over (n-k+1)!} (s-1)^{n-k+1}.
$$
Thus if $k\geq 0$ the function 
$\zeta ^{(k)}$ has a pole of order $k+1$ at $s=1$.  
We can in addition show  (Titchmarsh
\cite{titchmarsh}, 
pp. 95--96) 
that given $\epsilon > 0$ we have
$$
\zeta ^{(k)}(\sigma + it)  \ll  |t|^{\mu (\sigma ) + \epsilon} ,
$$
where

$$
\mu  (\sigma )
=  
\left\{
\begin{array}{cc}
0 , 
& \mbox{if}~  \sigma > 1 ,\\
{(1-\sigma ) \over 2} ,
& \quad \quad \mbox{if}~  0 \leq  \sigma \leq 1 , \\
{1\over 2}- \sigma, 
&  \mbox{if}~ \sigma <0, 
\end{array}
\right. 
$$
if $k\geq 0$.  
Therefore Theorem \ref{theorem9} follows from
Theorem \ref{theorem1} 
with $c= -{1\over 2}$, $s_0=1$ and 
$m=k+1$ to $\zeta ^{(k)}(s)$ as required. 
\end{proof}


We now state a  formulation of the 
{\color{black} Lindel\"of} {\color{black} Hypothesis} in terms of 
$\zeta ^{(k)}$ from \cite{leeetal}. 


\begin{lemma}
\label{lemma10}
The {\color{black} Lindel\"of} {\color{black} Hypothesis} 
\eqref{LH}
is equivalent, {\color{black}for every $\epsilon > 0$, to}
\begin{equation}
\label{LH_k}
{\color{black} \left |
\zeta ^{(k)} ({1\over 2}+it)
\right | }
\ll |t|^{\epsilon}
\end{equation}
as $|t|$ tends to $\infty$, for any $k \geq 0$.
\end{lemma}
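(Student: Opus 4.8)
The plan is to prove the equivalence by showing that the Lindel\"of Hypothesis in its classical form \eqref{LH} implies the bound \eqref{LH_k} for every $k \geq 0$, and conversely that the case $k=0$ of \eqref{LH_k} is literally \eqref{LH}. The backward direction is immediate: taking $k=0$ in \eqref{LH_k} recovers $|\zeta(1/2+it)| \ll |t|^\epsilon$, which is \eqref{LH}. So the real content lies in the forward implication, passing from a bound on $\zeta$ itself to a bound on all its derivatives on the critical line.

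For the forward direction the natural tool is Cauchy's integral formula for derivatives. First I would fix $\epsilon > 0$ and assume \eqref{LH}. For a point $1/2 + it$ on the critical line I would integrate around a small circle of fixed radius $r$ (say $r = 1$, or any fixed constant) centered at $1/2 + it$, writing
\begin{equation}
\zeta^{(k)}\left(\tfrac{1}{2}+it\right) = \frac{k!}{2\pi i} \oint_{|s - (1/2+it)| = r} \frac{\zeta(s)}{(s - (1/2+it))^{k+1}}\, ds.
\end{equation}
The denominator is constant ($r^{k+1}$) in modulus on the contour, so the estimate reduces to bounding $|\zeta(s)|$ on the circle, uniformly as $|t| \to \infty$. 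The key step is therefore to control $\zeta$ not just on the line $\Re s = 1/2$ but on a fixed neighborhood of it. Here I would invoke the standard fact that the Lindel\"of Hypothesis propagates off the critical line: by the Phragm\'en--Lindel\"of convexity principle together with the known polynomial bounds (such as the convexity bound $\mu(\sigma)$ recorded just above in the proof of Theorem~\ref{theorem9}), the bound $\zeta(\sigma + it) \ll |t|^{\epsilon}$ holds uniformly for $\sigma$ in any fixed closed interval $[1/2 - r, 1/2 + r]$ once \eqref{LH} is assumed. Combining this with the Cauchy estimate gives $|\zeta^{(k)}(1/2 + it)| \ll_{k,r} |t|^{\epsilon}$, which is \eqref{LH_k}.

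The main obstacle I anticipate is the uniformity of the Phragm\'en--Lindel\"of step: one must check that assuming the Lindel\"of bound on the single line $\Re s = 1/2$ actually forces the same $|t|^{\epsilon}$ order of growth throughout a two-sided neighborhood, including slightly to the left of the critical line where the unconditional convexity bounds are larger. The functional equation of $\zeta$, relating values at $s$ and $1-s$, is what lets one transfer the hypothesis across the line and close this gap. A secondary point of care is the pole of $\zeta$ at $s=1$: since we work on a fixed small disc about $1/2 + it$ with $|t| \to \infty$, the pole is never enclosed, so $\zeta$ is holomorphic on and inside the contour and Cauchy's formula applies without modification. Since this lemma is cited as coming from \cite{leeetal}, I would present the argument at the level of this sketch rather than reproduce every constant, remarking that the dependence on $k$ enters only through the factor $k!\, r^{-k}$ and is harmless because $k$ is fixed and $\epsilon > 0$ is arbitrary.
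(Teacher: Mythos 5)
The paper does not actually prove Lemma \ref{lemma10}: it is quoted from \cite{leeetal}, so there is no in-house argument to compare against. Your overall strategy (Cauchy's integral formula for the forward direction, the case $k=0$ for the backward direction) is the standard one. However, as written your forward direction contains a genuine error. You claim that, assuming \eqref{LH}, the bound $\zeta(\sigma+it)\ll|t|^{\epsilon}$ holds uniformly on a fixed two-sided interval $[1/2-r,\,1/2+r]$, with $r=1$ offered as an admissible choice, and that the functional equation ``closes the gap'' to the left of the critical line. This is false: the functional equation $\zeta(s)=\chi(s)\zeta(1-s)$ with $|\chi(\sigma+it)|\asymp|t|^{1/2-\sigma}$ shows that for $\sigma<1/2$ one has $|\zeta(\sigma+it)|\gg|t|^{1/2-\sigma}$ for a set of $t$ of positive density, since the mean square of $\zeta(1-\sigma-it)$ on the line $\Re s=1-\sigma>1/2$ is a positive constant. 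Under the Lindel\"of Hypothesis the correct Lindel\"of function is $\mu(\sigma)=\max(0,\,1/2-\sigma)$, so on the left half of your radius-one circle the integrand is genuinely as large as $|t|^{1+o(1)}$, and the Cauchy estimate then yields only $\zeta^{(k)}(1/2+it)\ll|t|^{1+\epsilon}$, which is useless for \eqref{LH_k}.

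The repair is easy and you should make it explicit: shrink the contour. Either take the radius $r=r(\epsilon)$ to be a small fixed constant --- on $|s-(1/2+it)|=r$ one has $\Re s\ge 1/2-r$, hence under \eqref{LH} $|\zeta(s)|\ll_{\delta}|t|^{r+\delta}$, and Cauchy's formula gives $|\zeta^{(k)}(1/2+it)|\ll k!\,r^{-k}|t|^{r+\delta}$, which is $\ll|t|^{\epsilon}$ upon choosing $r=\delta=\epsilon/2$ --- or take the classical choice $r=1/\log|t|$, for which $|t|^{r}$ is bounded and the factor $r^{-k}=(\log|t|)^{k}$ is absorbed into $|t|^{\epsilon}$. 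With that change the forward direction is correct, and your remark about the pole at $s=1$ never being enclosed is fine. One further caveat: your backward direction only works if the lemma is read as ``\eqref{LH} is equivalent to the conjunction of \eqref{LH_k} over all $k\ge 0$'', which is the intended reading here; if one instead wanted the equivalence for each fixed $k\ge 1$ separately, deducing \eqref{LH} from a bound on $\zeta^{(k)}$ alone is not immediate (naive integration of the derivative loses a factor $|t|$) and requires an extra input such as a mean-value estimate for $\zeta$ on short intervals; an argument of exactly this flavour appears in the converse part of the proof of Theorem \ref{theorem11}.
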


 We now give a new characterization of the {\color{black} Lindel\"of} {\color{black} Hypothesis} for the Riemann zeta function, which generalizes the classical one. The classical case $k_n = n \, 
 (n= 0, 1, 2, \ldots)$
 appears in \cite{leeetal}.

\begin{theorem}
\label{theorem11}
Suppose 
$(k_n)_{n\geq 1}$ is $L^p$-good universal 
for some $p\in [1, \infty ]$ and 
Hartman uniformly distributed.  
Suppose $k$ is any non-negative integer.  
Then the statement, for any natural number 
$l$, 
\begin{equation}
\label{limitzetak_Talphabeta_iterates}
\lim _{N \to \infty} {1\over N} \sum _{n=0}^{N-1} {\color{black} \left | \zeta ^{(k)} \left  ({1\over 2}+iT^{k_n}_{\alpha , \beta }( x) \right ) \right  |}^{\color{black}2l}
= 
{\alpha \over \pi } \int _{\mathbb{ R}}
{|\zeta ^{(k)} ({\color{black}{1\over 2}}+ i \tau )|^{\color{black}2l}\over \alpha ^2 +(\tau - \beta )^2}  d\tau 
\end{equation}
for $\mu _{\alpha , \beta }$ -almost all $x$ 
in $\mathbb{ R}$, is equivalent 
to the {\color{black} Lindel\"of} {\color{black} Hypothesis}.
\end{theorem}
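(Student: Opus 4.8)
The plan is to read the left-hand side of \eqref{limitzetak_Talphabeta_iterates} as an ergodic average of the nonnegative real-valued function $g_{l}(\tau):=|\zeta^{(k)}(\tfrac12+i\tau)|^{2l}$ sampled along $(T^{k_{n}}_{\alpha,\beta}x)_{n}$, and its right-hand side as the space mean $\int_{\mathbb{R}}g_{l}\,d\mu_{\alpha,\beta}$ against the Cauchy law \eqref{mualphabeta}. Since $T_{\alpha,\beta}$ is $\mu_{\alpha,\beta}$-ergodic, the whole equivalence reduces to two things: that for each $l$ the averages $\frac1N\sum_{n=0}^{N-1}g_{l}(T^{k_{n}}_{\alpha,\beta}x)$ converge a.e. to $\int g_{l}\,d\mu_{\alpha,\beta}$ precisely when this space mean is finite, and that finiteness of all these means is equivalent to the Lindel\"of Hypothesis. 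Note that, unlike in Theorem~\ref{theorem1}, $g_{l}$ is not meromorphic, so I cannot invoke Theorem~\ref{theorem1} directly; I only borrow the ergodic-averaging mechanism from its proof, which is insensitive to meromorphy.

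First I would treat the forward direction. Assuming the Lindel\"of Hypothesis, Lemma~\ref{lemma10} gives $|\zeta^{(k)}(\tfrac12+i\tau)|\ll_{\epsilon}(1+|\tau|)^{\epsilon}$ for every $\epsilon>0$, whence $\int_{\mathbb{R}}g_{l}^{\,p}\,d\mu_{\alpha,\beta}\ll_{\epsilon}\int_{\mathbb{R}}(1+|\tau|)^{2lp\epsilon}\,(\alpha^{2}+(\tau-\beta)^{2})^{-1}d\tau$, which is finite once $\epsilon<\tfrac{1}{2lp}$; thus $g_{l}\in L^{p}(\mu_{\alpha,\beta})$ for every finite $p$ and every $l$, so the good-universality hypothesis applies at the relevant exponent. $L^{p}$-good universality of $(k_{n})$ then yields a.e. existence of the limit, while Hartman uniform distribution, through the spectral identity $F(N,z)\to\mathbf 1_{\{z=1\}}$ of \eqref{FNz_limite} together with ergodicity (the eigenvalue $1$ being simple with constant eigenfunction), forces $L^{2}$-convergence of the averages to $\int g_{l}\,d\mu_{\alpha,\beta}$. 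The a.e. limit must then coincide with the $L^{2}$-limit, giving \eqref{limitzetak_Talphabeta_iterates}.

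For the converse I would argue by contraposition. If the Lindel\"of Hypothesis fails, Lemma~\ref{lemma10} furnishes $\delta>0$ and $t_{j}\to\infty$ with $|\zeta^{(k)}(\tfrac12+it_{j})|\ge t_{j}^{\delta}$. Using the unconditional polynomial bound on $\zeta^{(k+1)}$ (of the same type as condition~(2) of Theorem~\ref{theorem1}) to control the variation of $\zeta^{(k)}$, one obtains $|\zeta^{(k)}(\tfrac12+it)|\ge\tfrac12 t_{j}^{\delta}$ on a short interval about $t_{j}$; summing these lower bounds against the Cauchy weight shows $\int_{\mathbb{R}}g_{l}\,d\mu_{\alpha,\beta}=+\infty$ once $l$ is large enough. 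For such an $l$, truncating $g_{l}^{(M)}:=\min(g_{l},M)\in L^{\infty}(\mu_{\alpha,\beta})\subseteq L^{p}(\mu_{\alpha,\beta})$ and applying the forward mechanism to the bounded function $g_{l}^{(M)}$ gives $\liminf_{N}\frac1N\sum_{n=0}^{N-1}g_{l}(T^{k_{n}}_{\alpha,\beta}x)\ge\int g_{l}^{(M)}\,d\mu_{\alpha,\beta}$ a.e.; letting $M\to\infty$ forces the averages to diverge a.e., so \eqref{limitzetak_Talphabeta_iterates} fails for that $l$. Hence the full family of limits implies the Lindel\"of Hypothesis.

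The main obstacle is the converse, specifically the passage from a pointwise lower bound $|\zeta^{(k)}(\tfrac12+it_{j})|\ge t_{j}^{\delta}$ to divergence of the Cauchy-weighted moment $\int g_{l}\,d\mu_{\alpha,\beta}$. This is precisely the mechanism behind the classical characterisation of the Lindel\"of Hypothesis via the growth of the moments $\int_{0}^{T}|\zeta(\tfrac12+it)|^{2l}\,dt$, here transcribed to the finite measure $\mu_{\alpha,\beta}$; it is what makes \emph{finiteness of all the space means} equivalent to the hypothesis rather than merely weaker, and the short-interval largeness step carries the analytic content. A secondary point, handled in the forward direction, is that Hartman uniform distribution is exactly what pins the a.e. limit to the space mean rather than to some other invariant quantity.
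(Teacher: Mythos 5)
Your proof is correct and follows essentially the same route as the paper: the forward direction runs the $L^{p}$-good-universality and Hartman-uniform-distribution machinery (the proof mechanism of Theorem \ref{theorem1}) on $|\zeta^{(k)}(\tfrac12+i\cdot)|^{2l}$, which lies in every $L^{p}(\mu_{\alpha,\beta})$ under the Lindel\"of Hypothesis by Lemma \ref{lemma10}, while the converse propagates a lower bound $|\zeta^{(k)}(\tfrac12+it_{j})|\ge t_{j}^{\delta}$ to a short interval of length $\asymp t_{j}^{-1}$ via the unconditional linear bound on the next derivative and concludes that the Cauchy-weighted $2l$-th moment is infinite once $l$ is large. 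Your two refinements --- noting that Theorem \ref{theorem1} cannot be cited verbatim because $|\zeta^{(k)}|^{2l}$ is not meromorphic, and the truncation argument showing the ergodic averages themselves diverge almost everywhere --- tighten points the paper leaves implicit but do not change the argument.
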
  

\begin{proof}
Via Lemma \ref{lemma10}
the {\color{black} Lindel\"of}
{\color{black} Hypothesis} implies that given $\epsilon > 0$ 
we have 

$$
\left |\zeta ^{(k)}{\color{black}\left ({1\over 2}+.\right )} \right |^{\color{black}2m} \in L^p (\mathbb{ R}, {\cal B},  \mu _{\alpha , \beta })  
$$
for each pair of natural numbers $k,m$.  {\color{black}Here ${.}$ represents the variable $t \in \mathbb{ R}$.}
Theorem \ref{theorem1} 
and the ergodicity of $T_{\alpha , \beta }$ 
implies that

$$
\lim _{N \to \infty} {1\over N} \sum _{n=0}^{N-1}{\color{black} \left | \zeta ^{(k)}
 \left ( {1\over 2}+iT^{k_n}_{\alpha , \beta }( x) \right ) \right |^{2l}}
= 
{\alpha \over \pi } \int _{\mathbb{ R}}
{|\zeta ^{(k)} ({\color{black}{1\over 2}}+ i \tau )|^{\color{black}2l} \over \alpha ^2 +(\tau - \beta )^2}  d\tau.
$$

{\color{black} We now prove the converse.  First}
$$
{\color{black} \zeta ^{(k)} (s) = (-1)^{k-1} \int _1^{\infty}
{[x] -x +{1\over 2} \over x^{s+1}}(\log x)^{k-1} 
(-s\log x +k)dx + {(-1)^kk! \over (s-1)^{k+1}}.}
$$
Thus there exists $C_k>0${\color{black}, dependent only on 
$k, \alpha$ and $\beta$,} such that for $|t|\geq 1$ we have
$$
|\zeta ^{(k)} ({1\over 2} + it) | <C_k|t|.
$$
Also evidently there exists $c_{\alpha , \beta } > 0$ such that if $| 
\tau | \geq 1$
$$
{1\over  \alpha ^2 + (\tau - \beta )^2} \geq c_{\alpha , \beta } {1\over 1+ \tau ^2}.
$$
{\color{black}Assuming that the Lindel\"of Hypothesis is false, implies there exists}
$\eta >0$ and $\tau _m \to \infty$  and 
$C^1_{\alpha , \beta }$ such that
$$
{\color{black} \left| \zeta ^{(m)}({1\over 2} + it)\right|} > 
C^1_{\alpha , \beta } \tau _m^{\eta}.
$$
Now{\color{black}, $|\zeta ^{(k)} ({1\over 2} + it) | <C_k|t|$} 
for any $|t| \geq 1$ with $C_k>0$, and
$$
{\color{black} \left|\zeta ^{(k)} ({1\over 2}+i\tau ) - \zeta ({1\over 2}+i\tau _m ) \right|} = \left | \int _{\tau _m}^{\tau}
|\zeta ^{(k+1)} ({1\over 2}+it )dt \right | < C^2_{\alpha , \beta }| \tau - \tau _m|  \tau.
$$
So $|\zeta ({1\over 2}+i\tau )| \geq {1\over 2}  C^1_{\alpha , \beta } \tau _m^{\eta}$ for $\tau$ with $| \tau  - \tau _m| \leq \tau _m ^{-1}$ with $m$ large enough. {\color{black}  Let $L:= {2\over 3}\tau _m$;
then the interval $I := (\tau _m - \tau _m ^{-1} , \tau _m + \tau _m  ^{-1} )$  contains the interval  $(L, 2L)$ for large $m$.}  Hence
$$
\int _L^{2L}
{\color{black} \Bigl| 
{\zeta ^{(k)}} 
\left ( {1\over 2} + i \tau \right )}
\Bigr|^{2 l} \left ({1\over 2} {d\tau \over 1+\tau ^2} \right )
\geq 
\left ( {C_1 \over 2} \right )^{2 l} \int _I \tau _m ^{2 l \eta -2}d\tau  
=
2. \left ( {C_1 \over 2} \right )^{2 l} . \tau _m ^{2 l \eta -3},
$$
which is  $\gg T^{2 l \eta -3}$, and this  is 
impossible as $l \to \infty$.  
So our theorem is proved. 
\end{proof}


We now specialize to the case $T=T_{1,0}$ 
and give a condition in terms of ergodic averages  
equivalent to the Riemann {\color{black} Hypothesis}.
\textcolor{black}{We denote by $\rho= \b +i\gamma$,  a representative complex zeros of the Riemann zeta function.
See Titschmarsh \cite{titchmarsh}, p. 30 for instance for details.}


\begin{theorem}
\label{theorem12}
Suppose $(k_n)_{n\geq 1}$ is both 
Hartman uniformly distributed and $L^p$-good universal for some $p>1$.  Then 
for almost all $x$ in $\mathbb{ R}$ with respect to Lebesgue measure we have
\begin{equation}
\label{zeta_Titerates_zeroes}
\lim _{N\to \infty} {1\over N} 
\sum _{n=1}^N \log 
{\color{black}\left | \zeta \left ({1\over 2} + 
{1\over 2}i T^{k_n}x \right )
\right |} 
=
 \sum _{\rho: \Re (\rho ) {\color{black}>} {1\over 2}}
\log \left | {\rho \over 1- \rho } \right | .
\end{equation}
\end{theorem}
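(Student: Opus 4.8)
The plan is to recognise the right-hand side of \eqref{zeta_Titerates_zeroes} as the Balázard--Saias--Yor integral and to realise the left-hand side as an ordinary ergodic average of a fixed $L^p$ function. Recall from \cite{balazardsaiasyor} the identity
\begin{equation*}
\frac{1}{2\pi}\int_{\mathbb{R}}\frac{\log\left|\zeta\left(\tfrac12+it\right)\right|}{\tfrac14+t^2}\,dt \ = \ \sum_{\rho\,:\,\Re(\rho)>\frac12}\log\left|\frac{\rho}{1-\rho}\right|,
\end{equation*}
so that it suffices to prove that the left-hand side of \eqref{zeta_Titerates_zeroes} converges, for $\mu_{1,0}$-almost every $x$, to this integral. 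Since $\mu_{1,0}$ and Lebesgue measure are mutually absolutely continuous, almost everywhere with respect to either is the same statement, so I may work throughout on the ergodic system $(\mathbb{R},\mathcal{B},\mu_{1,0},T)$ with $T=T_{1,0}$.

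First I would set $g(y):=\log\left|\zeta\left(\tfrac12+\tfrac12 i y\right)\right|$, a real-valued measurable function on $\mathbb{R}$, and regard $\tfrac{1}{N}\sum_{n=1}^N\log\left|\zeta\left(\tfrac12+\tfrac12 iT^{k_n}x\right)\right|$ as the ergodic average $\tfrac{1}{N}\sum_{n=1}^N g(T^{k_n}x)$. Granting that $g\in L^p(\mu_{1,0})$, the hypothesis that $(k_n)_{n\geq1}$ is $L^p$-good universal gives almost everywhere convergence of this average, while the Hartman uniform distribution of $(k_n)_{n\geq1}$ forces the limit onto the constants via the spectral argument behind \eqref{FNz_definition}--\eqref{FNz_limite}: the spectral measure of $g$ relative to the Koopman operator has no atom at $z=1$ once the constant part is removed, and $F(N,z)\to 0$ off $z=1$, so the non-constant component averages to $0$. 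Hence the limit equals $\int_{\mathbb{R}}g\,d\mu_{1,0}$ for almost every $x$. A change of variable $t=y/2$ then gives
\begin{equation*}
\int_{\mathbb{R}}g\,d\mu_{1,0}=\frac{1}{\pi}\int_{\mathbb{R}}\log\left|\zeta\left(\tfrac12+\tfrac12 iy\right)\right|\frac{dy}{1+y^2}=\frac{1}{2\pi}\int_{\mathbb{R}}\frac{\log\left|\zeta\left(\tfrac12+it\right)\right|}{\tfrac14+t^2}\,dt,
\end{equation*}
the factor $2$ from $dy=2\,dt$ combining with $1+y^2=4(\tfrac14+t^2)$; this is precisely the Balázard--Saias--Yor integral, and combined with the displayed identity it yields \eqref{zeta_Titerates_zeroes}.

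The main obstacle is the verification that $g\in L^p(\mu_{1,0})$ for the $p>1$ of the hypothesis, that is, $\int_{\mathbb{R}}\bigl|\log\left|\zeta\left(\tfrac12+\tfrac12 iy\right)\right|\bigr|^p\,\frac{dy}{1+y^2}<\infty$. Two features must be controlled: the logarithmic singularities of $g$ at the ordinates of the zeros of $\zeta$ on the critical line, and the growth of $g$ at infinity. The singularities are harmless locally, since $|\log|\zeta||^p$ is integrable against $dy$ near each zero; for the global estimate I would invoke Selberg-type moment bounds for $\log|\zeta(\tfrac12+it)|$, which after a Hölder interpolation to the nearest even integer moment give $\int_1^T\bigl|\log|\zeta(\tfrac12+it)|\bigr|^p\,dt\ll_p T(\log\log T)^{p/2}$, and then conclude by partial summation against the weight $(1+t^2)^{-1}$, whose decay makes the integral converge. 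This shows $g\in L^p(\mu_{1,0})$ for every finite $p$, in particular for the prescribed $p>1$, and moreover $g\in L^2(\mu_{1,0})$, which is what the spectral identification of the limit requires. A secondary point worth recording is that, although Theorem \ref{theorem1} is stated for meromorphic $f$, the averaging mechanism it rests upon applies verbatim to any $g\in L^p(\mu_{1,0})$ on the ergodic system $(\mathbb{R},\mathcal{B},\mu_{1,0},T)$, which is exactly what licenses its use here for the non-analytic integrand $\log|\zeta|$.
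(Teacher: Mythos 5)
Your proposal follows essentially the same route as the paper: reduce the right-hand side to the Bal\'azard--Saias--Yor integral (Lemma \ref{lemma13}), view the left-hand side as a subsequence ergodic average of $g(y)=\log\bigl|\zeta\bigl(\tfrac12+\tfrac12 iy\bigr)\bigr|$ on the ergodic system $(\mathbb{R},\mathcal{B},\mu_{1,0},T_{1,0})$, verify $g\in L^p(\mu_{1,0})$, and invoke $L^p$-good universality plus Hartman uniform distribution to identify the almost-everywhere limit with $\int g\,d\mu_{1,0}$, which your change of variables $t=y/2$ correctly matches to the Bal\'azard--Saias--Yor integral. The one place you genuinely diverge is the integrability check, and there your treatment is the more careful one: the paper controls $\log|\zeta|$ only from above via $|\zeta(\tfrac12+it)|\leq C|t|$ and treats the rest as a continuity statement, which does not by itself handle the $-\infty$ singularities of $\log|\zeta|$ at the critical zeros away from $s=\tfrac12$; your use of local integrability of $|\log|\zeta||^p$ near each zero together with Selberg-type moment bounds $\int_1^T|\log|\zeta(\tfrac12+it)||^p\,dt\ll_p T(\log\log T)^{p/2}$ and summation against the weight $(1+t^2)^{-1}$ closes that gap cleanly. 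Your closing remark that Theorem \ref{theorem1} is really a statement about arbitrary $L^p$ observables, not just meromorphic ones, is also the correct reading of the argument in Section \ref{S3} and is exactly what licenses applying it to the non-analytic function $\log|\zeta|$.
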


Evidently, the Riemann {\color{black} Hypothesis} follows 
if either side is zero.


To prove Theorem \ref{theorem12}, 
we need the following lemma due to  M. Balazard, E. 
Saias and M. Yor \cite{balazardsaiasyor}.


\begin{lemma}
\label{lemma13}
We have
\begin{equation}
\label{logzeta_balazard_zeroes}
{1\over 2 \pi } \int _{\Re (s) 
=
 {1\over 2} } {\log |\zeta (s) | \over |s|^2} |ds| =  
\sum _{ \rho : \Re (\rho) {\color{black}>} {1\over 2}}
 \log \left | {\rho \over 1- \rho } \right |  .
\end{equation}
\end{lemma}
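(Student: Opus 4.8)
The plan is to transport the problem from the half\-plane $\mathbb{H}_{1/2}=\{\Re s>1/2\}$ to the unit disc by a M\"obius map and then invoke Jensen's formula. I would introduce
\[
\psi(s)=\frac{1-s}{s}=\frac{1}{s}-1,\qquad \psi^{-1}(z)=\frac{1}{1+z},
\]
and observe that $|\psi(s)|=1$ exactly on the line $\mathbb{L}_{1/2}$ while $|\psi(s)|<1$ precisely when $\Re s>1/2$; hence $\psi$ is a conformal bijection of $\mathbb{H}_{1/2}$ onto the open unit disc $\mathbb{D}$, carrying the pole $s=1$ of $\zeta$ to the centre $z=0$ and the point at infinity to the boundary point $z=-1$. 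Setting $F:=\zeta\circ\psi^{-1}$, a meromorphic function on $\mathbb{D}$, reduces the lemma to a boundary computation for $F$.

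Next I would read off the divisor of $F$ inside $\mathbb{D}$. Its only pole is the image of $s=1$: since $s-1=-z+O(z^2)$ near $z=0$ and $\zeta(s)=(s-1)^{-1}+O(1)$ there, one gets $F(z)=-z^{-1}+O(1)$, a simple pole at the origin with leading coefficient $-1$. The zeros of $F$ in $\mathbb{D}$ are exactly the points $\psi(\rho)=(1-\rho)/\rho$ attached to the zeros $\rho$ of $\zeta$ with $\Re\rho>1/2$, and $|\psi(\rho)|^{-1}=|\rho/(1-\rho)|>1$. Jensen's formula for a meromorphic function with a simple pole at the origin (absorb the pole by writing $F(z)=z^{-1}g(z)$ with $g$ analytic, $g(0)=-1$, and apply ordinary Jensen to $g$) then yields
\[
\frac{1}{2\pi}\int_0^{2\pi}\log|F(e^{i\theta})|\,d\theta
=\log|{-1}|+\sum_{\rho:\,\Re\rho>1/2}\log\frac{1}{|\psi(\rho)|}
=\sum_{\rho:\,\Re\rho>1/2}\log\left|\frac{\rho}{1-\rho}\right|,
\]
which is the right\-hand side of the lemma. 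Finally, the substitution $z=\psi(1/2+it)$ gives $|d\theta|=|\psi'(s)|\,|ds|=|ds|/|s|^2$ on $\mathbb{L}_{1/2}$, and $t$ ranging over $\mathbb{R}$ traverses the circle once, so the left\-hand integral equals $\frac{1}{2\pi}\int_{\Re s=1/2}\frac{\log|\zeta(s)|}{|s|^2}|ds|$, which completes the identification.

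The hard part will be making Jensen's formula rigorous on this non\-compact configuration, since the boundary point $z=-1$ corresponds to $s=\infty$. I would need to verify two analytic facts. First, that $\log|\zeta(1/2+it)|$ is integrable against $d t/(1/4+t^2)$: the zeros on $\mathbb{L}_{1/2}$ contribute only logarithmic singularities, hence integrable ones, while the tails are handled by the convexity bound $\log|\zeta(1/2+it)|=O(\log|t|)$ for the positive part together with a bound preventing $\zeta$ from being too small too often for the negative part. Second, that the Blaschke/Nevanlinna\-class condition holds so that the zero sum converges and no boundary mass escapes at $z=-1$; here each summand is nonnegative and, for $\rho=\beta+i\gamma$ with large $|\gamma|$, behaves like $\tfrac{2\beta-1}{2\gamma^2}$, so the classical count of $O(T\log T)$ zeros up to height $T$ forces convergence. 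Technically one applies Jensen on circles of radius $R<1$ and lets $R\to1$, using these estimates to justify the passage to the limit and the vanishing of the contribution near $z=-1$; this limiting argument is the genuine content, the rest being the conformal bookkeeping above.
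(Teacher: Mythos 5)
You should first note that the paper does not prove this lemma at all: it is quoted verbatim from Balazard--Saias--Yor \cite{balazardsaiasyor}, and the proof printed after it in Section~\ref{S2.1} is the proof of Theorem~\ref{theorem12}, not of the lemma. So the relevant comparison is with the cited source, and your outline is essentially a faithful reconstruction of that argument: the M\"obius map $\psi(s)=(1-s)/s$ does carry $\mathbb{H}_{1/2}$ conformally onto the unit disc with $1\mapsto 0$ and $\infty\mapsto -1$, the divisor computation for $F=\zeta\circ\psi^{-1}$ (simple pole at $0$ with leading coefficient $-1$, zeros at $\psi(\rho)$ for $\Re\rho>1/2$, the trivial zeros lying outside the half-plane), the Jensen bookkeeping with $F(z)=z^{-1}g(z)$, and the change of variables $|d\theta|=|ds|/|s|^2$ are all correct, as is your estimate $\log|\rho/(1-\rho)|\sim(2\beta-1)/(2\gamma^2)$ guaranteeing convergence of the zero sum.

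The one place where your sketch stops short of a proof is exactly the point you flag as ``the genuine content,'' and it deserves to be named more precisely. Passing from Jensen's formula on circles of radius $R<1$ to the boundary integral is the assertion that $F$ lies in the class $N^{+}$ rather than merely in the Nevanlinna class $N$: a priori the limit of the circle means could exceed the boundary integral by the mass of a singular inner factor, which (since $F$ extends analytically across every boundary point other than $z=-1$) could only be an atom-like singular measure at $z=-1$. Your proposed handling of the lower bound on $\log|\zeta(1/2+it)|$ (``a bound preventing $\zeta$ from being too small too often'') is the vaguest step; the standard route is not a pointwise lower bound but the observation that membership in $N$ already forces $\log|F^{*}|\in L^{1}(\partial\mathbb{D})$, after which one must still rule out the singular factor at $-1$, which Balazard--Saias--Yor do using the functional equation and growth estimates for $\zeta$ in the half-plane. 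As written, your argument establishes the identity up to a nonnegative defect supported at $s=\infty$; eliminating that defect is the substantive analytic input you would need to supply to make the proof complete.
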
  

\bigskip \bigskip

\begin{proof}
We wish to use 
Theorem \ref{theorem1} 
to deduce Theorem \ref{theorem12} 
using Lemma \ref{lemma13}. 
To show Theorem \ref{theorem1} 
is relevant we need to show that 
$$
\log {\color{black} \left | \zeta\left  ({1\over 2} + i. \right ) \right |} \in L^ p ({\mathbb{ R}} , 
\mu _{1,0}),
$$
{\color{black} (where $.$ denotes $t \in \mathbb{R}$)}, i.e. that
$$
\int _{\Re (s) = 
{1\over 2} }\left  ({| \log |\zeta (s))|^ p | \over |s|^2} \right ) |ds| < \infty .
$$
We mentioned earlier that there exists $C>0$ such that if $|t|>1${\color{black},}
$$
{\color{black} \Bigl |\zeta \left ({1\over 2} + i t  \right ) \Bigr |  }
\leq C|t|.
$$
Also notice that 
${(\log |\zeta (s) |)^p \over |s|^2}$ is 
continuous on an interval on 
$\Re (s) = {1\over 2}$ centred on
$s={1\over 2}$.  
Away from that interval on $\Re (s) = {1\over 2}$  
we use the observation that 
$\left |\zeta ({1\over 2} + i t) \right |  
\leq C|t|$.  
Hence {\color{black}(for $s ={1\over 2} + it$),} given $\delta > 0$ we have
$$
{(\log |\zeta (s) |)^p \over |s|^2}  
\ll 
{1\over  |s|^{2-\delta}}.
$$
This means 
$$
{\color{black} \log \left | \zeta \left  ({1\over 2} + i.  \right ) \right | }\in L^ p (\mathbb{ R} , \mu _{1,0}),
$$
as required.  Using the fact that $T$ preserves the measure $\mu _{1,0}$ and is ergodic with respect to this measure, we have
$$
\lim _{N\to \infty} {1\over N} \sum _{n=1}^N 
\log 
{\color{black} \left| \zeta \left ({1\over 2} + {1\over 2}i T^{k_n}x \right )
\right | }
= {1\over 2 \pi}  \int _{\Re(s) {\color{black} >}
{1\over 2 }} {\log |\zeta (s) | \over |s|^2} 
|ds|. 
$$
$\mu _{1,0}$ almost everywhere.  
Theorem \ref{theorem12} 
now follows from Lemma \ref{lemma13}. 
\end{proof}


\subsection{Dirichlet $L${\color{black} -functions}}
\label{S2.2}



\begin{theorem}
\label{theorem14}
Let $L(s, \chi )$ denote the $L${\color{black} -functions} 
associated to the character $\chi$. 
Suppose $(k_n)_{n\geq 1}$ is
Hartman uniformly distributed and $L^p$-good universal for some $p>1$.
Then, for $k \geq 1$,

(i) if $\chi$ is non-principal, 
for $s \in \mathbb{ H}_{-{ 1\over 2}}$
we have
$$
\lim _{N \to \infty} {1\over N} 
\sum _{n=0}^{N-1} 
L ^{(k)} (s+iT^{k_n}_{\alpha , \beta }( x), \chi ) = 
{\alpha \over \pi } \int _{\mathbb{ R}}
{L ^{(k)} (s+ i \tau, \chi  ) \over \alpha ^2 +(\tau - \beta )^2}  d\tau 
\hspace{3cm}\mbox{}
$$
\begin{equation}
\label{lseries_chinonprincipal}
\mbox{} \hspace{2cm}
=
  ~{L ^{(k)} (s +\alpha +i\beta  ,  \chi  )} 
\qquad \quad \mbox{for almost all $x$ in $\mathbb{ R}$},
\end{equation}
(ii) if $\chi (= \chi_0)$ is principal, for 
$s \in \mathbb{ H}_{-{1\over 2}}\backslash
\mathbb{ L}_{1}$ we have
\begin{equation}
\label{lseries_chiprincipal}
\lim _{N \to \infty} {1\over N} 
\sum _{n=0}^{N-1} 
L ^{(k)} (s+iT^{k_n}_{\alpha , \beta }( x), \chi ) = 
{\alpha \over \pi } \int _{\mathbb{ R}}
{L ^{(k)} (s+ i \tau, \chi  ) \over \alpha ^2 +
(\tau - \beta )^2}  d\tau ,
\end{equation}
for almost all $x$ in $\mathbb{ R}$.
\end{theorem}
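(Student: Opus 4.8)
The plan is to apply Theorem \ref{theorem1} to the meromorphic function $f(s) = L^{(k)}(s,\chi)$ on $\mathbb{H}_{-1/2}$, exactly as the proof of Theorem \ref{theorem9} applies it to $\zeta^{(k)}$. I would first verify the three hypotheses of Theorem \ref{theorem1} with $c = -\frac12$. For condition (1) I would use that $L(s,\chi) = \sum_{n=1}^\infty \chi(n) n^{-s}$ is an absolutely convergent Dirichlet series for $\Re(s) > 1$; termwise differentiation gives $L^{(k)}(s,\chi) = \sum_{n=1}^\infty \chi(n)(-\log n)^k n^{-s}$, again absolutely convergent, so $|L^{(k)}(s,\chi)|$ is bounded by some $K$ on every half-plane $\{\sigma + it : \sigma > c'\}$ with $c' > 1$. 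For condition (3) I would split into the two cases: if $\chi$ is non-principal then $L(s,\chi)$ is entire, hence $L^{(k)}(s,\chi)$ is holomorphic on $\mathbb{H}_{-1/2}$ and $m = 0$; if $\chi = \chi_0$ is principal modulo $q$ then $L(s,\chi_0) = \zeta(s)\prod_{p\mid q}(1 - p^{-s})$, with the finite Euler product holomorphic and nonzero at $s=1$, so $L(s,\chi_0)$ has a simple pole at $s=1$ and $L^{(k)}(s,\chi_0)$ has a pole of order $m = k+1$ at $s_0 = 1$, with a Laurent expansion of the same shape as the one displayed for $\zeta^{(k)}$ in the proof of Theorem \ref{theorem9}.

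The analytically substantive step is condition (2), the growth estimate. Here I would invoke the functional equation for $L(s,\chi)$ together with the Phragm\'en--Lindel\"of convexity principle to obtain, for every $\epsilon > 0$, the bound $L(\sigma + it, \chi) \ll_{\chi,\epsilon} |t|^{\mu(\sigma)+\epsilon}$ as $|t|\to\infty$, with the same convexity exponent as for $\zeta$: namely $\mu(\sigma) = 0$ for $\sigma > 1$, $\mu(\sigma) = (1-\sigma)/2$ for $0 \leq \sigma \leq 1$, and $\mu(\sigma) = \frac12 - \sigma$ for $\sigma < 0$. The conductor $q$ enters only through the implied constant, not through the exponent of $|t|$, since the archimedean gamma factor coincides with that of $\zeta$ up to the parity of $\chi$. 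Passing from $L$ to its derivatives $L^{(k)}$ costs only powers of $\log|t|$ --- either by differentiating the integral representation analogous to the one used in the proof of Theorem \ref{theorem11}, or by applying Cauchy's formula on a disc of bounded radius to the bound for $L$ --- and these logarithmic factors are absorbed into $|t|^\epsilon$. Thus $L^{(k)}(\sigma + it, \chi) \ll |t|^{\mu(\sigma)+\epsilon}$ with $\nu = \mu$ non-increasing on $(-\frac12, \infty)$; near $c = -\frac12$ one has $\mu(\sigma) = \frac12 - \sigma = 1 + c - \sigma$, so the required inequality $\nu(\sigma) \leq 1 + c - \sigma$ holds.

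With the three hypotheses verified, I would apply Theorem \ref{theorem1} --- using that $\kappa = (k_n)_{n\geq 1}$ is $L^p$-good universal and Hartman uniformly distributed --- to conclude that for $s$ in $\mathbb{H}_{-1/2}$ (and, in the principal case, $s \notin \mathbb{L}_1$) the ergodic average converges to $\frac{\alpha}{\pi}\int_{\mathbb{R}} L^{(k)}(s+i\tau,\chi)/(\alpha^2 + (\tau-\beta)^2)\,d\tau$ for $\mu_{\alpha,\beta}$-almost all $x$. Since $\mu_{\alpha,\beta}$ has the strictly positive Cauchy density and hence shares its null sets with Lebesgue measure, this holds for Lebesgue-almost all $x \in \mathbb{R}$, as stated. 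Finally, in the non-principal case (i) there is no pole, so the contour-integration evaluation of $l_{\alpha,\beta}$ recorded after Theorem \ref{theorem1} collapses to $L^{(k)}(s + \alpha + i\beta, \chi)$, giving the second displayed equality. I expect the growth bound in condition (2) --- specifically confirming that the derivatives $L^{(k)}$ inherit the exponent $\mu(\sigma)$ uniformly enough on approach to the line $\Re(s) = -\frac12$ --- to be the only step requiring genuine care; the remaining verifications are formal consequences of the standard analytic theory of Dirichlet $L$-functions.
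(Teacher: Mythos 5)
Your proposal is correct and follows essentially the same route as the paper: verify conditions (1)--(3) of Theorem \ref{theorem1} for $L^{(k)}(s,\chi)$ with $c=-\tfrac12$, using the absolutely convergent Dirichlet series for $\Re(s)>1$, the convexity exponent $\mu(\sigma)$, and the dichotomy that $L^{(k)}(s,\chi)$ is entire for non-principal $\chi$ while for $\chi=\chi_0$ it has a pole of order $k+1$ at $s_0=1$, then apply Theorem \ref{theorem1} with $m=0$ or $m=k+1$. The paper simply cites references for the growth bound and the Laurent coefficients where you supply the Phragm\'en--Lindel\"of and Euler-product details, but the argument is the same.
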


Denote this limit, i.e. the
right hand side of 
\eqref{lseries_chiprincipal}, by
{\color{black}$l^{(k)}_{\alpha , \beta } (s, \chi_0)$.  Then}
\begin{equation}
\label{lk_alphabeta_dirichlet}
l^{(k)}_{\alpha , \beta } (s, \chi_0)
=  
\left\{
\begin{array}{cc}
L ^{(k)}(s+ \alpha +i \beta , \chi _0) 
+ \gamma _{-1} (\chi _0) P_k(s) , 
& \mbox{if}~  -{1\over 2} < \Re (s) < 1 , ~
s \not= 1- \alpha - i \beta ,\\
{\color{black}\gamma _k (\chi _0)} - 
{{k!} \gamma_{-1} (\chi _{0})  \over 
(2\alpha )^{k+1}}  ,
& \mbox{if}~  -{1\over 2} < \Re (s) < 1 , ~
s = 1- \alpha - i \beta , \\
 L^{(k)} (s+ \alpha + i \beta , \chi _0 ), 
 &  \mbox{if}~ \Re (s) > 1,
 \end{array}
 \right.  
\end{equation}
where $\gamma _{-1} (\chi _0), 
\gamma _k (\gamma _0)$, are constants 
that depend on $\chi _0$. 
These are  the coefficients of the Laurent 
expansion of $L^{(k)}(s, \chi _0)$ near $s=1$.  
If $k=0$ we can extend the result to the line
$\mathbb{ L}_1$ 
by defining
$$l^{(0)}_{\alpha , \beta }
=
l^{(0)}_{\alpha , \beta }(1+ it, \chi _0) 
=
L(1+ \alpha +i(t+\beta ), \chi _0) - 
{\alpha \gamma _{-1}(\chi _0) \over 
\alpha ^2 +(t^2 + \beta ^2)}.
$$


\begin{proof}
We know that $L^{(k)}{\color{black}(s, \chi )}$ has a Dirichlet {\color{black}series}
expansion for $\Re (s) >1$, 
for each non-negative integer $k$.   
From this we can show that
$$
\bigl|L^{(k)} (s, \chi )\bigr| \ll _k, _{\epsilon} 
|t|^{\mu ( \sigma) + \epsilon }
$$
where
$$
\mu  (\sigma )
=  \left\{
\begin{array}{cc}
0 , 
& \mbox{if}~  \sigma > 1 ,\\
{(1-\sigma ) \over 2} ,
& \mbox{if}~  0 \leq  \sigma \leq 1 ,\\
{1\over 2}- \sigma, 
&  \mbox{if}~ \sigma <0.
\end{array}
\right. 
$$
If $\chi$ is non-principal 
then $L^{(k)}(s, \chi )$ is entire 
for all $k\geq 0$, so $L^{(k)}(s, \chi )$ 
satisfies Theorem \ref{theorem1} 
for all $s \in \mathbb{ H}_{-{1\over 2}}$.  
If $\chi = \chi _0$ is principal, 
$L^{(k)}(s, \chi _0 ) \ (k\geq 1)$ 
has a pole of order $k+1$ at $s=1$.  
We can therefore apply  
Theorem \ref{theorem1} 
with $c= -{1\over 2}$ , $s_0 = 1$ and 
$m=k+1$, to $L^{(k)} (s, \chi _0)$ with Laurent 
coefficients coming from 
\cite{ishibashikanemitsu}. 
\end{proof}


\subsection{The Dedekind zeta function of a number field}
\label{S2.3}

We now consider the 
Dedekind $\zeta _{\mathbb{ K}}(s)$ function of a 
number field $\mathbb{ K}$ of degree 
$d_{\mathbb{ K}}$ over the rationals 
$\mathbb{ Q}$ which is defined by as follows. 
For $s \in \mathbb{C}$ such that 
$\Re (s) > 1$, let
$$
\zeta _{K}(s) = 
\sum _{I \subset \mathcal{O}_{\mathbb{K}}} 
{1\over (N_{\mathbb{K}/\mathbb{ Q}}(I))^s}.
$$  
Here $I$ runs over the ideals contained in 
the ring of 
integers of $K$ denoted ${{\mathcal{O}}_K} $ 
and $N_{K/{\mathbb{Q}}}(I)$ denotes the 
absolute norm of $I$ in $K$, which is 
the cardinality of the quotient 
${{{\cal O}_K}/ I}$.  
In the case $\mathbb{K} = {\mathbb{Q}}$, 
the function $\zeta _{\mathbb{K}} (s)$ reduces to 
the Riemann zeta function $ \zeta (s)$.  
The complex function $\zeta _{\mathbb{K}} (s)$ can 
be extended meromorphically to the 
entire complex plane with a simple pole 
at $s=1$.  See \cite{cohen}  p. 216, for more details.


\begin{theorem}
\label{theorem15}
Suppose $(k_n)_{n\geq 1}$ is 
$L^p$-good universal for some 
$p\in [1, \infty ]$ and Hartman uniformly 
distributed. For any 
$k\geq 0$ and 
any $s \in \mathbb{ H}_{{1\over 2}-{1\over d_{\mathbb{ K}}} }\backslash \mathbb{ L}_1$ 
we have
\begin{equation}
\label{limDedekindk_iteratesTalphabeta}
\lim _{N \to \infty} {1\over N} \sum _{n=0}^{N-1} \zeta ^{(k)}_{\mathbb{ K}}  
(s+iT^{k_n}_{\alpha , \beta }( x)) 
=
 {\alpha \over \pi } \int _{\mathbb{ R}}
{\zeta ^{(k)}_{\mathbb{ K}} (s+ i \tau ) \over \alpha ^2 +(\tau - \beta )^2}  d\tau  
\end{equation}
for almost all $x$ in $\mathbb{ R}$.
\end{theorem}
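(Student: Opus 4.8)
The plan is to show that $f=\zeta^{(k)}_{\mathbb{K}}$ satisfies the three hypotheses of Theorem~\ref{theorem1} with $c=\tfrac12-\tfrac1{d_{\mathbb{K}}}$, $s_0=1$ and $m=k+1$, and then to invoke that theorem directly, exactly as in the proofs of Theorems~\ref{theorem9} and \ref{theorem14}. Condition~(1) is immediate: the Dirichlet series $\zeta_{\mathbb{K}}(s)=\sum_{I}N(I)^{-s}$ converges absolutely and uniformly on each half-plane $\{\sigma\ge c'\}$ with $c'>1$, and termwise differentiation (which only introduces bounded factors $(-\log N(I))^{k}$) keeps this true, so $\zeta^{(k)}_{\mathbb{K}}$ is bounded on $\{\sigma>c'\}$. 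Condition~(3) follows from the fact that $\zeta_{\mathbb{K}}$ continues meromorphically to $\mathbb{C}$ with a single simple pole at $s=1$ (see \cite{cohen}); differentiating its Laurent expansion $k$ times shows that $\zeta^{(k)}_{\mathbb{K}}$ has a pole of order exactly $m=k+1$ at $s_0=1$ and is holomorphic elsewhere on $\mathbb{H}_c$, with Laurent coefficients computable from those of $\zeta_{\mathbb{K}}$.

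The heart of the matter, and the step I expect to be the main obstacle, is condition~(2). First I would record a convexity estimate $\zeta_{\mathbb{K}}(\sigma+it)\ll_{\mathbb{K},\epsilon}|t|^{\mu_{\mathbb{K}}(\sigma)+\epsilon}$ as $|t|\to\infty$, where
\[
\mu_{\mathbb{K}}(\sigma)=
\begin{cases}
0, & \sigma>1,\\
\tfrac{d_{\mathbb{K}}}{2}(1-\sigma), & 0\le\sigma\le1,\\
d_{\mathbb{K}}\bigl(\tfrac12-\sigma\bigr), & \sigma<0.
\end{cases}
\]
This rests on the functional equation $\Lambda_{\mathbb{K}}(s)=\Lambda_{\mathbb{K}}(1-s)$ for the completed Dedekind zeta function, whose $d_{\mathbb{K}}$ archimedean $\Gamma$-factors, treated through Stirling's formula, transfer the trivial bound $O(1)$ on $\{\sigma>1\}$ to the exponent $d_{\mathbb{K}}(\tfrac12-\sigma)$ on $\{\sigma<0\}$; the Phragm\'en--Lindel\"of principle then interpolates linearly across the strip. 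To pass from $\zeta_{\mathbb{K}}$ to its derivative I would apply Cauchy's integral formula on a circle of fixed radius about $\sigma+it$, so that each differentiation costs only a bounded factor and at worst a power of $\log|t|$, harmlessly absorbed into $\epsilon$.

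It remains to choose $\nu$ and verify the boundary inequality of condition~(2). One takes $\nu=\mu_{\mathbb{K}}$, which is non-increasing, and checks that $\nu(\sigma)\le 1+c-\sigma$ for $\sigma$ just above $c=\tfrac12-\tfrac1{d_{\mathbb{K}}}$. This is precisely where the degree of the field enters: the relevant branch has $d_{\mathbb{K}}(\tfrac12-c)=1$, so the two functions agree at $\sigma=c$, and since the controlling exponent there has slope $-d_{\mathbb{K}}\le -1$ against the slope $-1$ of $\sigma\mapsto 1+c-\sigma$, the inequality persists for $\sigma>c$; the same computation guarantees that the weighted integral on the right of \eqref{limDedekindk_iteratesTalphabeta} converges. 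The delicate point I would have to be most careful about is that it is genuinely this reflected exponent $d_{\mathbb{K}}(\tfrac12-\sigma)$, rather than the weaker convexity value, that governs the growth in the relevant right-neighbourhood of the line $\Re(s)=c$; establishing the estimate all the way up to this edge is where the real work lies.

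With the three hypotheses verified, Theorem~\ref{theorem1} applies with $c=\tfrac12-\tfrac1{d_{\mathbb{K}}}$, $s_0=1$ and $m=k+1$. Since $(k_n)_{n\ge1}$ is $L^p$-good universal and Hartman uniformly distributed, and $T_{\alpha,\beta}$ preserves and is ergodic for $\mu_{\alpha,\beta}$, the conclusion \eqref{lim_Lpgood_Hartman} specialises to \eqref{limDedekindk_iteratesTalphabeta} for every $s\in\mathbb{H}_c\setminus\mathbb{L}_1$ and almost every $x\in\mathbb{R}$, the excluded line $\mathbb{L}_1=\mathbb{L}_{\sigma_0}$ being exactly where the pole of $\zeta^{(k)}_{\mathbb{K}}$ lies. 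This completes the argument.
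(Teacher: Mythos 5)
Your overall strategy is the same as the paper's: verify conditions (1)--(3) of Theorem~\ref{theorem1} for $\zeta^{(k)}_{\mathbb{K}}$ and apply it with $c=\tfrac12-\tfrac1{d_{\mathbb{K}}}$, $s_0=1$, $m=k+1$. Conditions (1) and (3) are handled exactly as the paper does. But your verification of condition (2) --- the step you yourself flag as ``where the real work lies'' --- contains a genuine error. You evaluate your exponent function at $\sigma=c$ using the branch $d_{\mathbb{K}}(\tfrac12-\sigma)$, which by your own piecewise definition is only valid for $\sigma<0$. Since $c=\tfrac12-\tfrac1{d_{\mathbb{K}}}\ge 0$ for $d_{\mathbb{K}}\ge 2$, with strict inequality once $d_{\mathbb{K}}\ge 3$, the branch that actually governs a right-neighbourhood of $c$ is the convexity branch $\tfrac{d_{\mathbb{K}}}{2}(1-\sigma)$, and there $\nu(c)=\tfrac{d_{\mathbb{K}}}{2}\bigl(\tfrac12+\tfrac1{d_{\mathbb{K}}}\bigr)=\tfrac{d_{\mathbb{K}}}{4}+\tfrac12$, which exceeds the required bound $1+c-c=1$ as soon as $d_{\mathbb{K}}\ge 3$. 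So with the convexity estimate alone, condition (2) holds only for $\Re(s)>1-\tfrac{2}{d_{\mathbb{K}}}$, not down to $\tfrac12-\tfrac1{d_{\mathbb{K}}}$; your claim that the ``reflected exponent'' $d_{\mathbb{K}}(\tfrac12-\sigma)$ governs the growth just to the right of $\Re(s)=c$ is an assertion of a subconvexity bound (indeed, for $\sigma>0$ it is strictly smaller than the convexity exponent), and the functional equation plus Phragm\'en--Lindel\"of, which is all you invoke, does not deliver it.

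The paper sidesteps this entirely: its proof of Theorem~\ref{theorem15} does not derive the growth estimate but imports ``a bound for $L$ on the half line'' from Steuding \cite{steuding} (and the Laurent coefficients from \cite{hashimotoetal}), and then runs the Theorem~\ref{theorem9} argument verbatim. What is needed near $c$ is an exponent of the shape $\nu(\sigma)\le\tfrac12+\tfrac{d_{\mathbb{K}}}{2}\bigl(\tfrac12-\sigma\bigr)$, which equals $1$ exactly at $\sigma=\tfrac12-\tfrac1{d_{\mathbb{K}}}$ and has slope $-\tfrac{d_{\mathbb{K}}}{2}\le-1$; this is the content of the cited estimate, not of the convexity bound you wrote down. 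To repair your argument you must either quote that stronger bound (as the paper does) or restrict the stated half-plane to $\Re(s)>\max\bigl(\tfrac12-\tfrac1{d_{\mathbb{K}}},\,1-\tfrac{2}{d_{\mathbb{K}}}\bigr)$, which for $d_{\mathbb{K}}\ge 3$ is strictly larger than the half-plane in the statement. The remainder of your write-up (the Cauchy-formula passage from $\zeta_{\mathbb{K}}$ to $\zeta^{(k)}_{\mathbb{K}}$ and the final appeal to Theorem~\ref{theorem1}) is fine and agrees with the paper.
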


Denote the right hand side of this limit by 
$ l^{(k)}_{\mathbb{K}_{\alpha , \beta }}$.  
We have
\begin{equation}
\label{lk_alphabeta_dedekind}
l^{(k)}_{\mathbb{ K} _{\alpha , \beta }}(s)
=  
\left\{
\begin{array}{cc}
\zeta ^{(k)}_{\mathbb{ K}} (s+ \alpha +i \beta) 
+\gamma _{-1}({\mathbb{ K}})  P_k(s) , 
& \mbox{if}~  {1\over 2} -{1\over d_{\mathbb{ K}}} 
< \Re (s) < 1 , ~ 
s \not = 1- \alpha - i \beta ,\\
k!\gamma _k( \mathbb{ K})- {k! \gamma_{-1}(\mathbb{ K} ) \over {(2 \alpha )^{k+1}}}, 
& \mbox{if}~  {1\over 2}-{1\over d_{\mathbb{ K}}} 
< \Re (s) < 1 , ~
s = 1- \alpha - i \beta ,\\
\zeta ^{(k)} (s+ \alpha + i \beta ), 
&  \mbox{if}~ \Re (s) > 1.
\end{array}
\right.
\end{equation}
Here $\gamma _{-1}( \mathbb{ K} )$ and 
$\gamma _{k}( \mathbb{ K} )$  are constants 
{\color{black}dependent} only on $\mathbb{ K}$, 
which are coefficients of the Laurent expansion 
of $\zeta _{\mathbb{ K}}^{(k)}$ near $s=1$. 
Also  in the case $k=0$ we can extend 
the result to the line 
$\mathbb{ L}_1$ by setting
\begin{equation}
\label{lzero_alphabeta_dedekind}
l^{(0)}_{\mathbb{ K}_{\alpha , \beta }} (1+it) 
=
 \zeta ^{(0)} (1+\alpha +i(t+ \beta )) - 
 {\alpha \gamma _{-1}(\mathbb{ K}) \over \alpha ^2 +(t^2+\beta ^2)}.
\end{equation}

\begin{proof}
We refer to 
\cite{steuding} 
for a bound 
for $L$ on the half line 
and to \cite{hashimotoetal} 
for the {\color{black}coefficients} of the Laurent  
expansion $\zeta _{\mathbb{ K}}(s)$ 
near the pole $s=1$.  
We then proceed {\color{black} as earlier} with 
Theorem \ref{theorem9}
{\color{black},}  $c={1\over 2}-{1\over d_{\mathbb{ K}}}$, 
$s_0=1$ and $m=k+1$.
\end{proof}

\begin{remark} 
Results analogous to the ergodic 
characterisation of the {\color{black} Lindel\"of}
{\color{black} Hypothesis} just given,
can be proved
for 
$L${\color{black} -functions} {\color{black}associated to primitive} 
characters and Dedekind  $\zeta$ function.  
In both cases, the statements 
of ergodic characterisation are of the type:
there exist constants $k$, $\varepsilon >0 $ such that
$$
 f{\color{black} \left ({1\over 2} + it \right )} \ll _{f,k, \varepsilon  } |t|^{\epsilon},
$$
as $|t| \to \infty$, for suitable {\color{black} $L$} {\color{black} -functions}  $f$.
This inequality when $f$ is an $L$-{\color{black} function} 
\textcolor{black}{arising from Dirichlet series}
is called the 
{\color{black} Generalized} {\color{black} Lindel\"of} {\color{black} Hypothesis}.
\end{remark}


\subsection{The Hurwitz zeta function}
\label{S2.4}

Recall that the Hurwitz zeta function is 
defined for $a>0$ and $s \in {\mathbb{C}}$ 
with $\Re (s) >1 $ by  
$$\zeta (s, a) =
\sum _{n=1}^{\infty} {1\over (n+a)^s}.$$  
It is continued meromorphically to the 
whole of ${\mathbb{ C}}$ with a single pole at $s=1$.

\begin{theorem}
\label{theorem16}
Suppose $(k_n)_{n\geq 1}$ is 
$L^p$-good universal for some $p\in [1, \infty ]$ 
and Hartman uniformly distributed.
Then for any $s$ such that  $ \Re (s) > -{1\over 2}, 
s\not= 1$, with  $0 \leq a < 1$  
and $k$ a  non-negative integer,  we have
\begin{equation}
\label{limHurwitzk_iteratesTalphabeta}
\lim _{N \to \infty} 
{1\over N} \sum _{n=0}^{N-1} 
\zeta ^{(k)} (s+iT^{k_n}_{\alpha , \beta}( x), a) 
= 
{\alpha \over \pi } \int _{\mathbb{ R}}
{\zeta ^{(k)} (s+ i \tau , a ) \over \alpha ^2 +
(\tau - \beta )^2}  d\tau  ,
\end{equation}
for almost all $x$ in $\mathbb{ R}$.
\end{theorem}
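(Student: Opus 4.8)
The plan is to check that, for each fixed $a \in [0,1)$ and each non-negative integer $k$, the function $f(s) = \zeta^{(k)}(s,a)$ fulfils hypotheses (1), (2) and (3) of Theorem \ref{theorem1} with $c = -\tfrac{1}{2}$, $s_0 = 1$ and $m = k+1$, and then to apply that theorem verbatim, exactly as was done for $\zeta^{(k)}$ in Theorem \ref{theorem9} and for $\zeta_{\mathbb{K}}^{(k)}$ in Theorem \ref{theorem15}.

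First I would record the analytic structure. The Hurwitz zeta function $\zeta(s,a) = \sum_{n\geq 1}(n+a)^{-s}$ continues meromorphically to $\mathbb{C}$ with a single simple pole at $s=1$ of residue $1$, independent of $a$; hence $\zeta^{(k)}(s,a)$ has a pole of order $k+1$ at $s=1$, with a Laurent expansion whose coefficients are (up to sign and factorials) the generalized Stieltjes constants attached to $a$. This gives hypothesis (3); a reference for the explicit coefficients, in the spirit of the citation used in Theorem \ref{theorem15}, would be supplied. For hypothesis (1), the series $\sum_{n\geq 1}(n+a)^{-s}$ and its termwise $k$-th derivative $\sum_{n\geq 1}(-\log(n+a))^k (n+a)^{-s}$ converge absolutely and uniformly on any half-plane $\{\sigma + it : \sigma > c'\}$ with $c' > 1$, so $\zeta^{(k)}(\cdot, a)$ is bounded there.

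The substantive point is hypothesis (2), the growth in $t$. Since $\zeta(s,a)$ obeys a Hurwitz--Lerch functional equation with the same order of growth in $t$ as the Riemann zeta function, the standard convexity estimate yields the same exponent
$$\mu(\sigma) = \begin{cases} 0, & \sigma > 1,\\ (1-\sigma)/2, & 0 \leq \sigma \leq 1,\\ 1/2 - \sigma, & \sigma < 0, \end{cases}$$
as in the proofs of Theorems \ref{theorem9} and \ref{theorem15}; differentiating $k$ times only inserts powers of $\log|t|$, which are absorbed by $|t|^{\epsilon}$, so $\zeta^{(k)}(\sigma + it, a) \ll_{k,\epsilon,a} |t|^{\mu(\sigma)+\epsilon}$ as $|t| \to \infty$. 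Taking $\nu = \mu$, one verifies that $\nu$ is non-increasing and that near $c = -\tfrac{1}{2}$ one has $\mu(\sigma) = \tfrac{1}{2} - \sigma = 1 + c - \sigma$, so the requirement $\nu(\sigma) \leq 1 + c - \sigma$ holds (with equality), completing (2).

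With (1)--(3) verified, Theorem \ref{theorem1} applied with $c = -\tfrac{1}{2}$, $s_0 = 1$, $m = k+1$ delivers the limit identity for every $s \in \mathbb{H}_{-1/2} \setminus \mathbb{L}_1$, the excluded line $\Re(s)=1$ being precisely $\mathbb{L}_{\sigma_0}$ in the notation of Theorem \ref{theorem1}. The only genuine obstacle is securing the convexity bound uniformly in the parameter $a$; once that reference is in place the argument is identical to the Riemann and Dedekind cases, and the piecewise evaluation of the limit $l^{(k)}_{\alpha,\beta}$ follows from the residue computation already recorded after Theorem \ref{theorem1}.
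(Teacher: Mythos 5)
Your proposal follows exactly the paper's own route: the paper's proof of Theorem \ref{theorem16} simply says to follow the proof of Theorem \ref{theorem9} with $c=-\tfrac12$, $s_0=1$, $m=k+1$, citing \cite{steuding} for the half-line growth bound and the Laurent coefficients at $s=1$, which is precisely the verification of hypotheses (1)--(3) of Theorem \ref{theorem1} that you carry out in more detail. Your write-up is correct and, if anything, more explicit than the paper's about the convexity exponent and the check that $\nu(\sigma)\leq 1+c-\sigma$ near $c=-\tfrac12$.
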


\bigskip

Denote the right hand side of this limit by {\color{black}$l^{(k)}_{\alpha , \beta  }(s,a)$.  Then}
\begin{equation}
\label{lk_alphabeta_hurwitz}
l^{(k)}_{\alpha , \beta  }(s,a) 
=  
\left\{
\begin{array}{cc}
\zeta ^{(k)}(s+ \alpha +i \beta , a) + P_k(s) , 
& \mbox{if}~  -{1\over 2} < \Re (s) < 1 , ~
s \not= 1- \alpha - i \beta ,\\
(-1)^k\gamma _k(a) - {{k!} \over (2\alpha )^{k+1}}  ,
& \mbox{if}~  -{1\over 2} < \Re (s) < 1 , ~
s = 1- \alpha - i \beta , \\
\zeta ^{(k)} (s+ \alpha + i \beta ,a ), 
&  \mbox{if}~ \Re (s) > 1, 
\end{array}
\right. 
\end{equation}
where
\begin{equation}
\label{gammak_alphabeta_hurwitz}
\gamma _k(a) = 
\lim _{N\to \infty} \left (  
\sum _{n=1}^N {\log ^k (n+a) \over n+a} - 
{\log ^{k+1} (N+a) \over k+1} \right ).
\end{equation}
In the case $k=0$ 
we can extend the result to the 
line $\mathbb{ L}_1$ by setting
\begin{equation}
\label{lzero_alphabeta_hurwitz}
l^{(0)}_{\alpha , \beta } (1+it,a) 
= 
\zeta ^{(0)} (1+\alpha +i(t+ \beta ),a) - 
{\alpha \over \alpha ^2 +(t^2+\beta ^2)}.
\end{equation}
If we set $k_n=n \ (n=1,2, \ldots )$ 
Theorem \ref{theorem9} 
appears in \cite{leeetal} 
and this is the case 
$\alpha =1, \beta = 0$, and $k=1$  
in \textcolor{black}{\cite{steuding2}}.

\begin{proof}
Following the proof of Theorem 
\ref{theorem9} 
choose $c=-{1\over 2}, s_0 =1$ and $m=k+1$.  For the bound on 
$|\zeta ^{(k)}(s, a)|$ on the half line 
and the
coefficients of the Laurent series {\color{black} of}
$\zeta ^{(k)}(s,a)$ near $s=1$
we 
refer to \cite{steuding}. 
\end{proof}

\section{Proof of Theorem \ref{theorem1}}
\label{S3}

The proof is in the continuation, 
and {\color{black} a generalization},
of a Theorem of \cite{leeetal} in which {\color{black} conditions}
(1), (2) and (3) are used.
We first recall a special case of a Theorem of S. Sawyer \cite{sawyer}:

Suppose for a dynamical system 
$(\mathbb{ R} , \mathcal{B}, \mu _{\alpha , \beta }  , 
T _{\alpha , \beta})$ that 
$g \in L^p(\mathbb{ R}, \mathcal{B}, \mu _{\alpha , \beta })$ and let 
$\|g\|= (\int _X |f|^p 
d\mu _{\alpha , \beta })^{1\over p}$.  
Set
$$
Mg(x) = \sup _{N\geq 1}
\Bigl|
{1\over N} \sum _{n=1}^Ng(s+iT_{\alpha , \beta } ^{k_n}(x) ) 
\Bigr|.  \eqno (N=1,2, \dots )
$$
If $(k_n)_{n\geq 1}$ is 
$L^p$-good universal for $p > 1$, 
then there exists $C > 0$ such that 
\begin{equation}
\label{(3.1)}
\bigl\|Mg\bigr\|_p  \leq C \|g\|_p. 
\end{equation}
Because $\left  |{1\over N} 
\sum _{n=1}^Ng( s+ 
i T_{\alpha , \beta} ^{k_n}(x) ) \right | 
\leq Mg(x)$ $(N =1,2, \ldots)$ 
and $Mg \in L^p$, the dominated convergence 
theorem implies
$$
h(x) = \lim _{N\to \infty} {1\over N} 
\sum _{n=1}^Nh( s+ iT^{k_n}_{\alpha , \beta} (x) ) 
$$
exists in  $L^p$-norm.  
Our next order of business is to show 
that $h(s+ i T_{\alpha , \beta } (x)) =h(s+ ix)$.  
Let $U_{\alpha  , \beta } g(s+ i x) = 
g(s+ i T_{\alpha , \beta } ( x))$.  
This is a norm preserving operator on $L^p$ 
as $T_{\alpha , \beta }$ is $\mu _{\alpha , \beta}$ 
measure preserving.  
Also let $U^{-1}_{\alpha , \beta }$ denote the 
$L^2$ adjoint of $U_{\alpha , \beta }$.  
Recall that we say any sequence  
$(c_n)_{n \in \mathbb{ Z}}$ is positive 
definite if given a bi-sequence of complex numbers 
$(z_n)_{n\in  \mathbb{  Z}}$, 
only finitely many of whose terms are 
non-zero, we have 
$\sum _{n,m \in \mathbb{ Z}}c_{n-m}z_n 
{\overline {z_m}} \geq 0$.   
Here $\overline z$ is the conjugate of the complex 
number $z$.   
Let $\left< a,b\right > = \int _{\mathbb{ R}}  
a{ \overline b} d\mu _{\alpha , \beta }$ 
(i.e. the standard inner product on $L^2$).  
Notice that 
$\bigl(\left< U_{\alpha , \beta }^ng, 
g \right> \bigr)_{n\in \mathbb{ Z}}$ 
is positive definite.  
Recall that the Bochner-Herglotz theorem 
\cite{katznelson} 
says that there is a measure 
$\omega _g$ on ${\mathbb{ T}}$, 
such that
$$
\left< U_{\alpha , \beta } ^ng, g \right> 
= 
\int _{\mathbb{ T}}z^n d\omega _g (z). \eqno 
(n \in \mathbb{ Z})
$$ 
This tells us that
$$
\Bigl\|  {1\over N} 
\sum _{n=1}^Ng(s+ iT^{k_n+1}_{\alpha , \beta }(x) ) 
-  
{1\over N} 
\sum _{n=1}^Ng(s+ iT^{k_n}_{\alpha , \beta }(x) ) 
\Bigr\|_2
$$
$$
=\int _{\mathbb{ T}}(2-z-z^{-1}) 
\Bigl| {1\over N} 
\sum _{n=1}^Nz^{k_n} 
\Bigr| ^2d\omega _g(z)
$$
using the parametrization 
$z = e^{2\pi i \theta }$ 
for $\theta \in [0,1)$, this is
$$
=
4\int _{\mathbb{ T}} \sin ^2\left ( 
{\theta \over 2 } \right ) 
\Bigl| 
{1\over N} \sum _{n=1}^Nz^{k_n} 
\Bigr|^2 
d\omega _g (z).
$$
Using the fact that 
$\sin {\theta \over 2} = 0$ if 
$ \theta =0$ and the fact that 
$(k_n)_{n\geq 1}$ is Hartman 
uniformly distributed,
by
\eqref{FNz_definition} and \eqref{FNz_limite},
 we see that
$g(s+ iT_{\alpha , \beta }(x)) = g(x)$.
A standard fact from ergodic theory is that if 
$T_{\alpha , \beta }$ is ergodic and 
$g(s+ iT_{\alpha , \beta } (x)) =g(x)$ for 
measurable $g$ then $g(x)$ is constant, 
which must be 
$\int _{\mathbb{ R}} g d\mu _{\alpha , \beta }$.  
This extends to the $L^p$-norm  for all $p>1$.
\par
All we have to do now is to show that the 
pointwise limit is 
the same as the norm limit,  
i.e. that 
${\overline g(x)} = g(x) = 
\int _{\mathbb{  R}} gd\mu _{\alpha , \beta }$.  
We consider the sequence of natural numbers 
$(N_t)_{t\geq 1}$ such that
$$
\Bigl|{1\over N_t} 
\sum _ {n=1}^{N_t}g(s+ iT_{\alpha , \beta }^{k_n}(x)) -
\int _{\mathbb{ R}}  g(x) d \mu _{\alpha , \beta } 
\Bigr| _p \le {1\over t}.
$$
Thus
$$
\sum _{t=1}^{\infty}\int _X 
\Bigl|{1\over N_t} 
\sum _ {n=1}^{N_t}g(s+ iT_{\alpha , \beta }^{k_n}(x)) -
\int _{\mathbb{ R}}  g(x) d \mu_{\alpha , \beta } 
\Bigr|^p d\mu  < \infty .
$$
Fatou's lemma gives
$$
\int _{\mathbb{ R}} 
\Bigl( \sum _{t=1}^{\infty} 
\Bigl|{1\over N_t} 
\sum _ {n=1}^{N_t}g(s+ i T_{\alpha , \beta }^{k_n}(x)) -
\int _{\mathbb{ R}} g(x)  d \mu _{\alpha , \beta } 
\Bigr|^p 
\Bigr) d\mu  < \infty .
$$
This implies 
$$
 \sum _{t=1}^{\infty}
 \Bigl|{1\over N_t} 
 \sum _ {n=1}^{N_t}g(s+ i T_{\alpha , \beta }^{k_n}(x)) -
 \int _{\mathbb{ R}}  g(x) d \mu _{\alpha , \beta } 
 \Bigr|^p   < \infty .
$$
almost everywhere.  
This means

$$
\Bigl|{1\over N_t} 
\sum _ {n=1}^{N_t}g (s+ i T_{\alpha , \beta }^{k_n}(x)) -
\int _X g(x) d \mu _{\alpha , \beta } 
\Bigr| 
= o(1) .
$$
$\mu  _{\alpha , \beta }$ almost everywhere. 
As $(k_n)_{n\geq 1}$ is $L^p$-good 
universal we must have 
$$\lim _{N \to \infty} {1\over N} 
\sum _{n=0}^{N-1} g(s+iT^{k_n}_{\alpha , \beta } x) =
\int _{\mathbb{ R}}  g(s+ix) d\mu _{\alpha , \beta }$$ 
$\mu _{\alpha , \beta }$ almost everywhere as 
required to prove Theorem \ref{theorem1}.

\bigskip

\section{Proof of Theorems \ref{theorem4} to \ref{theorem1.5}}
\label{S4}

We begin by recalling the spectral regularization 
method of Lifshits and Weber 
\cite{lifshitsweber}
\cite{lifshitsweber2},
and
some technical preliminaries.
Assume $(X, \beta , \nu )$ is a measure space with 
finite measure $\nu$ and that 
$S$ is a $\nu$ measure preserving 
transformation of  $(X, \beta , \nu )$.  
For $f \in L^1(X, \beta , \nu )$ let
\begin{equation}
\label{A_N_definition}
B_N(f) = {1\over N}\sum _{k=0}^{N-1}f \circ T^k. 
\qquad \qquad (N=1,2, \ldots)
\end{equation}

We need the following lemma, due to R. Jones, 
R. Kaufman, J. Rosenblatt and M. Wierdl 
\cite{jonesetal}. 


\begin{lemma}
\label{lemma6}
Suppose $(N_p)_{p\geq }$ is a strictly 
increasing sequence of positive integers.  
Then there exists an absolute 
constant $C>0$ such that
\begin{equation}
\label{equation6}
\sum _{p=1}^{\infty} 
\bigl\|  
\sup _{N_p \leq N < N_{p+1}}|B_N(f ) - B_{N_p}(f)| \  \bigr\|_2^2 
\, \leq 
\,C \| f\| _2.
\end{equation}
\end{lemma}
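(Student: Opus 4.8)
The plan is to prove the equivalent square-function estimate $\|\mathcal{O}f\|_2 \le C\|f\|_2$, where
$$
\mathcal{O}f := \Bigl( \sum_{p\ge 1}\, \sup_{N_p \le N < N_{p+1}} |B_N f - B_{N_p}f|^2 \Bigr)^{1/2};
$$
squaring this and interchanging the sum with the integral by Tonelli returns the asserted inequality. This is the oscillation inequality of Jones, Kaufman, Rosenblatt and Wierdl, whose decisive feature is that $C$ is independent of $(N_p)$. Writing $Uf = f\circ S$ for the Koopman operator, which is an $L^2(\nu)$-isometry since $S$ preserves $\nu$ (replace $U$ by its minimal unitary dilation if $S$ is not invertible), we have $B_N f = \frac1N\sum_{k=0}^{N-1}U^k f$; as in the proof of Theorem \ref{theorem1}, the Bochner--Herglotz theorem produces a spectral measure $\sigma_f$ on $\mathbb{T}$ with $\|B_N f - B_M f\|_2^2 = \int_{\mathbb{T}}|\phi_N(\lambda)-\phi_M(\lambda)|^2\,d\sigma_f(\lambda)$, where $\phi_N(\lambda)=\frac1N\sum_{k=0}^{N-1}\lambda^k$. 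Every norm estimate below will be reduced, through this identity, to a uniform-in-$\lambda$ bound on the symbols $\phi_N$, which satisfy the decay $|\phi_N(\lambda)|\lesssim \min\{1,(N|\lambda-1|)^{-1}\}$ away from the frequency $\lambda = 1$.

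I would then split the oscillation along the dyadic skeleton $\{2^j\}$. Inserting the dyadic endpoints $2^{\lfloor\log_2 N\rfloor}$ and $2^{\lfloor\log_2 N_p\rfloor}$ and using the triangle inequality, a standard accounting --- at most one $p$-gap straddles each dyadic point, and the $p$-gaps lying inside a fixed block aggregate to that block's $2$-variation --- gives the pointwise domination
$$
\mathcal{O}f \ \lesssim \ \mathcal{S}f \ + \ \mathcal{O}_{\mathrm{lac}}f ,
$$
where $\mathcal{S}f := \bigl(\sum_{j}V_2(B_N f: 2^j\le N<2^{j+1})^2\bigr)^{1/2}$ is the aggregate of the short (within-block) $2$-variations and $\mathcal{O}_{\mathrm{lac}}f$ is the oscillation of the lacunary family $(B_{2^j}f)_j$ along the blocks met by $(N_p)$. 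The point of the split is that $V_2$ is admissible across the bounded range of a single block but must be avoided across the full lacunary scale, where the $2$-variation of ergodic averages is known to be unbounded on $L^2$.

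For the short term I would bound each within-block supremum by the block $2$-variation and apply a Rademacher--Menshov decomposition of $[2^j,2^{j+1})$ into $O(j)$ levels of nested dyadic sub-blocks; this converts the supremum into a square function over the sub-block increments at the cost of a factor $\sim j$. By Plancherel, $\|B_{r(I)}f - B_{l(I)}f\|_2^2 = \int_{\mathbb{T}}|\phi_{r(I)}(\lambda)-\phi_{l(I)}(\lambda)|^2 \, d\sigma_f(\lambda)$, and since the two windows have common length $\sim 2^j$ and differ only by the scale $2^l$ of the sub-block, summing these symbol differences over the sub-blocks at a fixed level yields a geometric gain in $j-l$, uniform in $\lambda$. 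Summing the geometric series in $l$ and then in $j$ absorbs the logarithmic loss and gives $\sum_j \|V_2(B_N f:2^j\le N<2^{j+1})\|_2^2 \le C\|f\|_2^2$, that is, $\|\mathcal{S}f\|_2 \le C\|f\|_2$.

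The main obstacle is the lacunary term $\|\mathcal{O}_{\mathrm{lac}}f\|_2\le C\|f\|_2$, precisely because this is an $L^2$ endpoint estimate at which the naive bound by the increment square function $\bigl(\sum_j|B_{2^{j+1}}f - B_{2^j}f|^2\bigr)^{1/2}$ fails, since the oscillation is not pointwise controlled by that square function. Here I would establish instead the $L^2$ jump inequality for the lacunary averages --- that the $\lambda$-jump counting function $\mathcal{N}_\lambda$ of $(B_{2^j}f)_j$ obeys $\sup_{\lambda>0}\lambda^2\|\mathcal{N}_\lambda\|_1\le C\|f\|_2^2$ --- and then deduce the oscillation bound from it by the standard implication from jumps to oscillations. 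The jump inequality itself is proved spectrally, using the concentration of the symbols $\phi_{2^j}$ near $\lambda = 1$ and their geometric decay elsewhere to control the distribution of jumps uniformly in $\lambda$. Combining the short and lacunary estimates through the pointwise domination then yields $\|\mathcal{O}f\|_2\le C\|f\|_2$ with $C$ absolute, as required.
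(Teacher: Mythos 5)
First, a remark on the ground truth: the paper itself offers no proof of Lemma \ref{lemma6} --- it is quoted verbatim from Jones, Kaufman, Rosenblatt and Wierdl \cite{jonesetal} --- so your attempt has to be judged against that source rather than against anything in the text. (Note in passing that homogeneity forces the right-hand side of \eqref{equation6} to read $C\|f\|_2^2$; your reformulation silently makes this correction.) The first half of your architecture is correct and is essentially the one in \cite{jonesetal}: the reduction to $\|\mathcal{O}f\|_2\le C\|f\|_2$, the dyadic splitting into short (within-block) and lacunary oscillations with the bookkeeping you indicate, and the short-variation estimate via Rademacher--Menshov together with the refined symbol bound $|\phi_N(\lambda)-\phi_M(\lambda)|\lesssim \frac{N-M}{N}\min\{1,N|1-\lambda|\}$, which supplies the geometric gain that absorbs the logarithmic loss. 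That part is sound in outline.

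The genuine gap is in the lacunary half, which you rightly call the main obstacle but do not actually close. Both steps you invoke there conceal the difficulty. A ``spectral'' proof of the $\lambda$-jump inequality is not available in the form you describe: the Bochner--Herglotz calculus only evaluates $L^2$ norms of fixed linear combinations of the $B_Nf$, so it produces square functions such as $\sum_j\|B_{2^{j+1}}f-B_{2^j}f\|_2^2\lesssim\|f\|_2^2$, whereas the jump counting function $\mathcal{N}_\lambda$ is defined by a stopping time and --- exactly like the oscillation itself, as your own discussion shows --- cannot be dominated by such a square function without an unbounded Cauchy--Schwarz loss over long blocks. And the ``standard implication from jumps to oscillations'' at the single exponent $p=2$ is not a layer-cake computation (the integral $\int_0^\infty\lambda\,\|\mathcal{N}_\lambda\|_1\,d\lambda$ diverges at both ends); the known implication is a substantive later result requiring jump estimates in an open range of exponents and interpolation. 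The ingredient that actually closes this case in \cite{jonesetal}, and which is absent from your sketch, is the comparison with the dyadic martingale $E_jf$: pointwise one has $\mathcal{O}_{\mathrm{lac}}f\lesssim \mathcal{O}\bigl((E_jf)_j\bigr)+\bigl(\sum_j|B_{2^j}f-E_jf|^2\bigr)^{1/2}$, the martingale oscillation is bounded with an absolute constant by applying Doob's maximal inequality to the increments $E_{j_{p+1}}f-E_{j_p}f$ and using their orthogonality, and the remaining difference term is an honest square function amenable to your symbol estimates. Without this (or an equivalent martingale device such as L\'epingle's inequality), the lacunary oscillation bound with a constant independent of $(N_p)$ --- the whole point of the lemma, as opposed to Bourgain's earlier $J$-dependent version --- is not established.
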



Write $\log_{+}(u)= \max (1, \log (u ))$ 
for $u\geq 1$.  
Let $\omega$ denote 
the spectral measure associated 
to the element $f$ and define its regularised 
measure $\hat {\omega }$  
via its Radon-Nykodim 
derivative by
$$
{d \hat{\omega } \over dx}(x) = 
\int _{-\pi}^{ \pi} Q (\theta , x) 
\omega ( d \theta ),
$$
where
$$
Q( \theta , x) \ = \ 
\left\{
\begin{array}{cc}
|\theta |^{-1} \log_{+}^ 2 
(\left |{ \theta \over x } \right | ) 
& \mbox{if}~ |x| < | \theta |,\\
\theta^2 |x|^{-3} 
& \mbox{if}~ |\theta | \leq |x| \leq \pi .
\end{array}
\right.
$$
The following is a theorem of 
Lifshits and Weber 
\cite{lifshitsweber}. 


\begin{lemma}
\label{lemma7}
Suppose $N_0$ and $N_1$ with $N_0< N_1$  are positive integers.  
Then there exists an absolute 
constant $C>0$ such that
\begin{equation}
\label{equation4.2}
\bigl\|  \sup _{N_0 \leq N < N_{1}}
|B_N(f ) - B_{N_0}(f )| \  
\bigr\|_2^2 
~\leq~ 
\, C \hat {\omega} 
\bigl(
[{1\over N_1}{1\over N_0} )
\bigr).
\end{equation}
\end{lemma}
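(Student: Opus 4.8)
The plan is to pass to the spectral side of the Koopman operator $U$ defined by $Ug = g\circ T$, which is an isometry of $L^2(X,\beta,\nu)$ whose spectral measure at $f$ is precisely $\omega$, so that for every bounded Borel multiplier $m$ on $\mathbb{T}$ one has $\|m(U)f\|_2^2 = \int_{-\pi}^{\pi}|m(e^{i\theta})|^2\,\omega(d\theta)$. Writing $\Phi_N(\theta)=\frac{1}{N}\sum_{k=0}^{N-1}e^{ik\theta}$ for the Fej\'er-type multiplier, we have $B_N(f)=\Phi_N(U)f$ and hence $B_N(f)-B_{N_0}(f)=(\Phi_N-\Phi_{N_0})(U)f$. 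By Tonelli's theorem the right-hand side of \eqref{equation4.2} equals $C\int_{-\pi}^{\pi}\bigl(\int_{1/N_1}^{1/N_0}Q(\theta,x)\,dx\bigr)\omega(d\theta)$, so it suffices to dominate, frequency by frequency, the contribution of the maximal function to the left-hand side by the inner integral $\int_{1/N_1}^{1/N_0}Q(\theta,x)\,dx$.

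The core difficulty, and the reason the estimate is nontrivial, is that the supremum over $N$ does not commute with the spectral integral $\int d\omega$; one cannot simply pull $\sup_N|\Phi_N(\theta)-\Phi_{N_0}(\theta)|$ outside. I would handle this by a dyadic chaining of Rademacher--Menshov type. Decompose $[N_0,N_1)$ into dyadic blocks; telescoping the increments $B_{M+1}(f)-B_M(f)$ within a block $[a,b]$ collapses to $(\Phi_{b+1}-\Phi_a)(U)f$, whose squared $L^2$-norm is computed exactly by $\int_{-\pi}^{\pi}|\Phi_{b+1}(\theta)-\Phi_a(\theta)|^2\,\omega(d\theta)$. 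Bounding the block maxima and summing over the dyadic scales produces a square function weighted by a logarithmic factor. The essential point, which is what makes the kernel $Q$ with its $\log_+^2(|\theta/x|)$ factor appear, is that for a fixed $\theta$ only the scales $N\lesssim 1/|\theta|$ and the logarithmically many scales lying between $x=1/N$ and the critical scale $|\theta|$ contribute, so that the crude global penalty $\log(N_1-N_0)$ must be replaced by the frequency-localized penalty $\log_+(|\theta|/x)$.

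With the chaining in place the proof reduces to a deterministic estimate on the multiplier. Fixing $\theta$ I would split the range of scales $x=1/N\in[1/N_1,1/N_0)$ at the transition $x=|\theta|$. On the coarse range $|\theta|\le|x|$ (that is, $N\le 1/|\theta|$) the averages satisfy $\Phi_N(\theta)=1+O(N|\theta|)$, so the one-step increments $\Phi_{N+1}(\theta)-\Phi_N(\theta)$ are of order $|\theta|$ and their chained contribution is comparable to $\int\theta^2|x|^{-3}\,dx$, consistent with the absence of any logarithm in the coarse branch of $Q$; on the fine range $|x|<|\theta|$ (that is, $N>1/|\theta|$) the decay $|\Phi_N(\theta)|\le\min\bigl(1,(N|\sin(\theta/2)|)^{-1}\bigr)\lesssim (N|\theta|)^{-1}$ together with the localized logarithmic weight reproduces $\int|\theta|^{-1}\log_+^2(|\theta/x|)\,dx$. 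Combining the two ranges yields $\int_{1/N_1}^{1/N_0}Q(\theta,x)\,dx$ up to an absolute constant, and integrating against $\omega$ while reversing the use of Tonelli gives $C\,\hat\omega\bigl([\tfrac{1}{N_1},\tfrac{1}{N_0})\bigr)$, as required.

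The step I expect to be the main obstacle is the second one: arranging the chaining so that the logarithmic loss is localized to $\log_+(|\theta|/x)$ rather than the global $\log(N_1-N_0)$. A naive Rademacher--Menshov bound overcharges the coarse frequencies, whose critical scale $1/|\theta|$ may lie outside $[1/N_1,1/N_0)$, and produces a spurious logarithmic factor that the coarse branch $\theta^2|x|^{-3}$ of the kernel $Q$ cannot absorb. Obtaining the sharp, scale-localized penalty is exactly the content of the Lifshits--Weber spectral regularization, and this is where the detailed work of the proof lies.
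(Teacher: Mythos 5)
The paper does not prove this lemma at all: it is quoted verbatim as a theorem of Lifshits and Weber with a citation to \cite{lifshitsweber}, so there is no internal argument to compare yours against. Your outline is the right strategy and is in fact the strategy of the cited reference: transfer to the spectral measure via the Koopman isometry (note $\|p(U)f\|_2^2=\int_{-\pi}^{\pi}|p(e^{i\theta})|^2\,\omega(d\theta)$ holds for isometries, not just unitaries, since $U^*U=I$), identify $B_N(f)-B_{N_0}(f)$ with the multiplier $\Phi_N-\Phi_{N_0}$, unwind $\hat\omega\bigl([\tfrac1{N_1},\tfrac1{N_0})\bigr)$ by Tonelli into $\int\bigl(\int_{1/N_1}^{1/N_0}Q(\theta,x)\,dx\bigr)\omega(d\theta)$, and control the supremum by dyadic chaining. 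You also correctly locate the crux: the supremum does not commute with the spectral integral, and a global Rademacher--Menshov penalty of order $\log^2(N_1/N_0)$ is not acceptable; it must be replaced by the frequency-localized weight $\log_+^2(|\theta|/x)$ built into $Q$.

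The gap is that this decisive step is described but not executed, and you say so yourself. As written, the argument reduces the lemma to the assertion that the chained contribution of each frequency $\theta$ is dominated by $\int_{1/N_1}^{1/N_0}Q(\theta,x)\,dx$ up to an absolute constant, and that assertion is exactly the content of the Lifshits--Weber spectral regularization inequality -- i.e., of the lemma itself. Two of your heuristics also need correction in the detailed bookkeeping. First, in the coarse regime $N_1\le 1/|\theta|$ the level sums in the dyadic decomposition decay geometrically (at level $j$ one gets roughly $2^j\cdot\bigl((N_1-N_0)2^{-j}|\theta|\bigr)^2=(N_1-N_0)^2\theta^2 2^{-j}$), so the Cauchy--Schwarz over levels costs no logarithm there at all; the coarse branch $\theta^2|x|^{-3}$ is not the place where a spurious logarithm threatens. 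The genuine difficulty sits in the fine and transitional regimes, where the level sums do not decay and where one must show that only $O\bigl(\log_+(N|\theta|)\bigr)$ dyadic levels charge a given $\theta$ nontrivially. Second, the statement that only scales $N\lesssim 1/|\theta|$ contribute is too strong: all scales contribute, with the fine scales damped by $|\Phi_N(\theta)|\lesssim (N|\theta|)^{-1}$, and it is precisely the interplay of this decay with the level count that produces the $\log_+^2$ in the kernel. To make the proof self-contained you would need to carry out this frequency-by-frequency accounting explicitly; otherwise the correct disposition is to do what the paper does and cite \cite{lifshitsweber}.
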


Suppose $0< p < \infty $.  
For a sequence  of real numbers 
$\underline{x}=\{ x_n :  n\geq 1 \}$ 
the quantity
$
\|\underline {x} \| _{p, \infty}$
is defined 
in \eqref{underline_x}.
Also if $r< p$ we have
$$
\| \underline {x} \| _{p, \infty} \leq \| \underline {x} \| _{p} 
\leq 
\left ( {p\over p-r} \right )^{1\over p} \| \underline {x} \| _{p, \infty}.
$$

We also consider
$$
N^*_{\underline{x}}=\sup_{m\geq 1} {\# \{ n: {x_n \over n}>{1\over m} \}\over m}.
$$

In our considerations, 
$x_n = f(\tau^n(x)) \ (n=1,2, \ldots )$  
and \,$\underline{x} =O_f(x)= 
(f(\tau ^n(x))_{n\geq 1}$  which is the value of $f$ 
along the orbit of a point $x \in X$.  
Also let $N_f(x) = N^*_{O_f(x)}$.  We have the 
following Lemma due to I. Assani \cite{assani} 
(see 
also \cite{weber} - 
we refer to Jamison, Orey and Pruitt 
\cite{jamisonoreypruitt} 
for more on the random variable case,
and \cite{raoren} for 
Birnbaum-Orlicz spaces and the ``$L \log_+ L$" notation).


\begin{lemma}
\label{lemma8}
(i) For any non-negative $f \in L^1( \mu )$ the 
function $N_f(x)$ is weak-$(p,p)$ 
for all $p \in (1, \infty)$.  Further
\begin{equation}
\label{equation4.3i}
\lim _{m\to \infty} {\# \{ n: {f(\tau ^n(x))  
\over n}>{1\over m} \}\over m} 
=
\int _X f d\mu ,
\end{equation}
$\mu$ almost everywhere and also in 
$L^1(\mu )$-norm.

(ii)  For $f \in L \log_+ L$ 

\begin{equation}
\label{equation4.3ii}
\left \| \left \| \left \{ {f(\tau ^n (x)) \over n}   \right \} \right  \| _{1, \infty} \right  \|_1 < \infty .
\end{equation}
\end{lemma}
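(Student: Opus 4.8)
The plan is to exploit the elementary comparison between $N_f(x)=N^*_{O_f(x)}$ and the weak-$\ell^1$ quasinorm of the orbit sequence. Writing $\Phi(f)(x):=\|\{f(\tau^n(x))/n:n\ge1\}\|_{1,\infty}$ as in \eqref{underline_x}, one checks directly from the definitions that $N_f(x)\le\Phi(f)(x)\le 2N_f(x)$: the left inequality by restricting the supremum defining $\Phi$ to $t=1/m$, the right one by comparing a general $t\in(1/(m+1),1/m]$ with the level $1/(m+1)$. Hence it suffices to work with $\Phi$, a positively homogeneous, monotone and (up to a multiplicative constant) subadditive operator commuting with $\tau$. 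The proof then splits into four steps: (a) the limit \eqref{equation4.3i} for bounded $f$; (b) a weak-type maximal estimate for $\Phi$; (c) the passage to general $f\in L^1(\mu)$ in (i); and (d) the $L\log_+L$ integrability \eqref{equation4.3ii}.

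Step (a) is soft. For $0\le f\le M$ only indices $n<mM$ contribute to $\frac1m\#\{n:f(\tau^n(x))>n/m\}$. Fix $\delta>0$ and levels $u_j=j\delta$; for $n$ with $n/m\in[u_j,u_{j+1})$ one has the sandwich $\mathbf 1[f(\tau^n(x))>u_{j+1}]\le\mathbf 1[f(\tau^n(x))>n/m]\le\mathbf 1[f(\tau^n(x))>u_j]$. Summing over the block $n\in[mu_j,mu_{j+1})$ and applying Birkhoff's ergodic theorem to each bounded indicator $\mathbf 1[f>u_j]$, the $j$-th block contributes $\delta\,\mu\{f>u_j\}$ in the limit $m\to\infty$. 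Summing over $j$ and letting $\delta\to0$ squeezes both the $\limsup$ and the $\liminf$ to $\int_0^M\mu\{f>u\}\,du=\int f\,d\mu$, the exceptional null set being the countable union of Birkhoff sets for the rational levels; since $\frac1m\#\{\cdots\}\le M$, dominated convergence upgrades this to $L^1$ convergence.

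The main obstacle is Step (b): the weak-type bound $\|\Phi(f)\|_{L^{p,\infty}(\mu)}\le C_p\|f\|_{L^p(\mu)}$ for $p\in(1,\infty)$. For a single threshold the estimate is trivial, since $\int_X\#\{n:f(\tau^n(x))>nt\}\,d\mu=\sum_{n\ge1}\mu\{f>nt\}\le t^{-1}\|f\|_1$ yields a weak-$(1,1)$ bound for $f\mapsto t\,\#\{n:f(\tau^n(\cdot))>nt\}$; the difficulty is the supremum over $t$, a genuinely nonlinear maximal operator. My plan is to invoke Calder\'on's transference principle — legitimate here because $\Phi$ is monotone, commutes with $\tau$, and is well approximated by its truncation to $n\le N$ — to reduce the inequality to the integer-shift model, where $\Phi$ becomes the weak-$\ell^1$ norm of $(a_{j+n}/n)_{n\ge1}$ for $a\in\ell^p(\mathbb Z)$, and to prove that combinatorial estimate by a rising-sun/Vitali covering argument on the indices. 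One expects the endpoint $p=1$ to fail (this is precisely why (i) is restricted to $p>1$ and why (ii) needs a stronger hypothesis), so no interpolation down to $L^1$ is available and the covering argument must be run for each $p>1$ separately.

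For Step (c), the lower bound $\liminf_m\frac1m\#\{n:f(\tau^n(x))>n/m\}\ge\int f\,d\mu$ is immediate by monotonicity, approximating $f$ from below by $f\wedge M$ and using Step (a). For the matching upper bound one writes $f=(f\wedge M)+(f-M)_+$ and, for $0<\eta<1$, splits $\{f(\tau^n(x))>n/m\}$ into $\{(f\wedge M)(\tau^n(x))>(1-\eta)n/m\}$ and $\{(f-M)_+(\tau^n(x))>\eta n/m\}$; the first contributes $\tfrac1{1-\eta}\int f$ by Step (a), while the second is dominated uniformly in $m$ by $\eta^{-1}\Phi((f-M)_+)(x)$, which the maximal estimate of Step (b) renders negligible in measure as $M\to\infty$. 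Running the standard ``maximal inequality $\Rightarrow$ a.e.\ convergence'' scheme forces the oscillation to vanish $\mu$-almost everywhere and identifies the limit as $\int f\,d\mu$. Finally (ii) follows by integrating the distribution function, $\int_X\Phi(f)\,d\mu=\int_0^\infty\mu\{\Phi(f)>\lambda\}\,d\lambda$, feeding in the weak-type bounds of Step (b) applied to the truncations of $f$ at each height $\lambda$; the arithmetic is exactly the computation that a weak-type operator maps $L\log_+L$ into $L^1$ (mirroring the fact that the ergodic maximal function of an indicator of a set of measure $\varepsilon$ has $L^1$-norm $\asymp\varepsilon\log(1/\varepsilon)$), so $f\in L\log_+L$ is precisely the integrability threshold making the integral finite.
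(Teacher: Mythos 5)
The paper does not actually prove this lemma: it is quoted as a theorem of I.~Assani \cite{assani} (with pointers to \cite{jamisonoreypruitt} and \cite{weber}), so there is no internal argument to measure yours against. Judged on its own terms, your Step (a) --- the bounded case via the layer-cake sandwich and Birkhoff --- is correct and standard, but the rest of the outline has genuine gaps. First, the claimed two-sided comparison $N_f(x)\le\Phi(f)(x)\le 2N_f(x)$ is false: $\Phi$ takes the supremum over \emph{all} $t>0$, and the range $t>1$ is not controlled by $N_f$. If $f(\tau x)=A$ is large and $f(\tau^n x)=0$ for $n\ge 2$, then $N_f(x)=\sup_m \tfrac1m\cdot 1=1$ while $\Phi(f)(x)=\sup_{t<A}t\cdot 1=A$. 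Only the supremum restricted to $t\le 1$ is comparable to $N_f$; the excess $\sup_{t\ge 1}t\,\#\{n:f(\tau^nx)>nt\}$ is bounded by $\sum_n n^{-1}f(\tau^nx)\mathbf{1}[f(\tau^nx)>n]$, whose integral is comparable to $\int f(1+\log_+ f)\,d\mu$ --- this is exactly where the $L\log_+L$ hypothesis of part (ii) comes from, and conflating the two quantities hides it. (The paper's own remark following the lemma makes the same overstatement, but that does not rescue your reduction.)

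Second, and more seriously, Step (b) --- the weak-$(p,p)$ maximal inequality --- is the entire content of the lemma, and ``Calder\'on transference plus a rising-sun/Vitali covering argument'' is a plan, not a proof: the shift-model combinatorial inequality it reduces to \emph{is} Assani's theorem, not an elementary covering exercise. Third, Step (c) cannot close as written: to pass from bounded $f$ to general non-negative $f\in L^1(\mu)$ you propose to control the tail $(f-M)_+$ by the maximal estimate of Step (b), but that estimate is weak-$(p,p)$ only for $p>1$, while $(f-M)_+$ is small only in $L^1$ and for generic $f\in L^1$ lies in no $L^p$ with $p>1$; since, as you yourself note, the weak-$(1,1)$ endpoint fails, you have no inequality that makes the bad part negligible, and the Banach-principle scheme does not apply. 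The same defect affects Step (d): the ``weak-type operator maps $L\log_+L$ into $L^1$'' arithmetic requires a weak-$(1,1)$ bound, which is unavailable; one must instead estimate $\int\Phi(f)\,d\mu$ directly, splitting at $t=1$ as above and handling $N_f$ separately. A self-contained proof should follow Assani's route --- write $\#\{n:f(\tau^nx)>n/m\}=\sum_{k}\#\{n\le k:\tau^nx\in A_{k,m}\}$ with $A_{k,m}=\{k/m<f\le (k+1)/m\}$ and compare with Birkhoff maximal functions of the indicators $\mathbf{1}_{A_{k,m}}$ --- or simply cite \cite{assani} as the paper does.
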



Note that, for two positive constants $C_1 , C_2$, 
$$
C_1 N_f (x) \leq 
\left \| \left \{ {f(\tau ^n (x)) \over n} , n\geq 1\right \} \right  \|_{1, \infty}
\leq
C_2 N_f (x).
$$


We now turn to the proof of 
Theorem \ref{theorem4}. 


By Lemma \ref{lemma6}, 
$$
\sum _{k=1}^{\infty} 
\bigl\|  \sup _{N_k \leq N < N_{k+1}}
|Y_N(\sigma ) - Y_{N_k}(\sigma )| \  \bigr\|_2^2 \leq 
{C\over \pi} \int _{\mathbb{R}}
{|\zeta (\sigma + i \tau )|^2 \over 1+ \tau ^2} d \tau ,
$$
where as before $C$ is universal.  
We note that, for 
$\sigma \in ( {1\over 2} , 1 )$,
$$
 \int _{\mathbb{ R}}
{|\zeta (\sigma + i t )|^2 \over 1+ t^2} dt =  
\int _0^{\infty} {\{x\} \over  x^{2 \sigma +1}}
dx
= 
-{1\over \sigma} \left ( 
{ \zeta (2 \sigma ) \over 2} + 
{ \zeta (2 \sigma -1) \over 2 \sigma -1} \right ).
$$
Here  {\color{black} again} $\{ x \} = x- [x]$ 
denotes the fractional 
part of $x$.  
The quantity in the bracket on the 
right hand side is negative.  
Further,
$$
{1\over 2 \pi} \int _{\mathbb{ R}}{ |\zeta ({1\over 2} + it) |^2 \over {1\over 4} +t^2} dt 
= \log ( 2 \pi ) - \gamma .
$$
where $\gamma$ is Euler's constant.  
See Prop. 7,  Cor. 8 and (1.26) in Coffey 
\cite{coffey}. 
Thus
\begin{equation}
\label{CsurPi}
 {C\over \pi} \int _{\mathbb{ R}}
{|\zeta (\sigma + i \tau )|^2 \over 1+ \tau ^2} d \tau \leq   { C \over \pi \sigma}
\left | {\zeta (2\sigma ) \over 2}+ {\zeta ( {2\sigma -1}) \over 2\sigma -1} \right |.
\end{equation}
completing the proof of Theorem \ref{theorem4}.  
$\square$


We now turn to the proof of 
Theorem \ref{theorem5}. 
The first assertion is a consequence 
of Lemma \ref{lemma8} 
(i).  The limit is
$$
{1\over \pi} \int _{\mathbb{ R}}
{|\zeta (\sigma + i t )|^2 \over 1+ \tau ^2} d t .
$$
This integral is a special case of integrals 
calculated in \cite{steuding}.
As a consequence of 
Theorem 1.1 in
\cite{steuding2},
$$
{1\over \pi} \int _{\mathbb{ R}}
{|\zeta (\sigma + i t )|^2 \over 1+ \tau ^2} d t  = \zeta (\sigma +1) - {2 \over \sigma ( 2 - \sigma )}.
$$

For the second assertion, it follows from 
Lemma \ref{lemma8} 
(ii).  
The function 
$\zeta (s) \in L \log_{+} L$ 
is integrable, with respect to the 
Cauchy  measure, because of the estimate
$$
\zeta (\sigma + it ) \ll _{\epsilon} t^{{1-\sigma \over 2}+ \epsilon}.
$$
See  Titchmarsh, \cite{titchmarsh} 
section 5.1, for the details of this.  
Therefore  Theorem \ref{theorem5} 
is proved as required.  
$\square$

To prove Theorem \ref{theorem1.4} 
we need the following  two lemmas. 

Suppose $(X,\beta ,\mu )$ is a measure space and
that $T : X \to X$ is a measure preserving map.  
Let $(a_k)_{k= 0}^{\infty}$ be a
sequence of natural numbers and for any 
measurable $f$ on $X$ set
\begin{equation}
\label{CNfak_definition}
\mbox{} \qquad \qquad C_Nf(x) \ 
:= 
\ {1\over N}\sum _{k=0}^{N-1}
f(T^{a_k}x), 
\qquad \qquad (N=1,2,\cdots)
\end{equation}
that is the ergodic averages 
corresponding to the sequence and let
$$
Mf(x) \ 
:= 
\ \sup _{N\geq 1}|C_Nf(x)|.
$$
Further from the data $(X,\beta , \mu , T)$  
and $f$ we construct the
ergodic $q$-variation function
$$
V_qf(x) \ = \ (\sum _{N\geq 1}
|C_{N+1}f(x)-C_Nf(x)|^q)^{1\over q}.
\eqno (q \ \geq \ 1)
$$

Our first lemma is Theorem 1 
from 
\cite{nairweber1}.

\begin{lemma}
\label{lemma4.4}
Suppose for a sequence of natural numbers 
$(a_k)_{k=0}^{\infty}$ that for some 
$p \ > \ 1$ and 
$\widetilde{C_p} \ > \ 0$ {\color{black}depending 
only} on $p$ and $(X,\beta , \mu , T)$
we have 
\begin{equation}
\label{leqno4.1}
\bigl\|Mf \bigr\|_p \ \leq \ 
\widetilde{C_p} \|f\|_p. 
\end{equation}
Then there exists another constant $D_p \ > \ 0$ {\color{black}depending only} on $p$
and $(X,\beta , \mu , T)$ such that if $q \ > \ 1$ then
\begin{equation}
\label{leqno4.2}
\bigl\|V_qf \bigr\|_p \ \leq \ 
D_p \|f\|_p. 
\end{equation}
Further suppose there is a constant 
$\widehat{C} \ > \ 0$ {\color{black}depending only} 
on $(X,\beta, \mu , T)$ such that we have
\begin{equation}
\label{leqno4.3}
\mu (\{ x : Mf (x) \ > \ \lambda \}) \ \leq \ 
{\widehat{C} \over \lambda }
\int _X|f|\, d\mu. 
\end{equation}
Then  there is a constant $D \ > \ 0$ {\color{black}depending only} on
$(X,\beta , \mu , T)$ such that if $q \ > \ 1$ then 
\begin{equation}
\label{leqno4.4}
\mu (\{ x : V_qf (x) \ > \ \lambda \}) \ \leq \ 
{D\over \lambda }\int _X|f|\, d\mu. 
\end{equation}
\end{lemma}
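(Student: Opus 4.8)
The plan is to reduce both assertions to corresponding bounds for a single auxiliary $\ell^q$-valued operator, by means of an exact telescoping identity. Writing $f_N := f\circ T^{a_N}$, one checks directly from \eqref{CNfak_definition} that
\[
C_{N+1}f(x)-C_Nf(x)=\frac{1}{N+1}\bigl(f_N(x)-C_Nf(x)\bigr),\qquad (N=1,2,\ldots).
\]
Since $|f_N-C_Nf|\le |f_N|+Mf$ and, for $q>1$, $c_q:=\bigl(\sum_{N\ge1}(N+1)^{-q}\bigr)^{1/q}<\infty$, the triangle inequality in $\ell^q$ gives the pointwise domination
\[
V_qf(x)\ \le\ c_q\,Mf(x)\ +\ Gf(x),\qquad Gf(x):=\Bigl(\sum_{N\ge1}\frac{|f_N(x)|^q}{(N+1)^q}\Bigr)^{1/q}.
\]
Because $\|(d_N)\|_{\ell^q}\le\|(d_N)\|_{\ell^{q_0}}$ whenever $q\ge q_0$, the operator $V_q$ is nonincreasing in $q$; thus it suffices to treat any convenient $q_0\in(1,\min\{q,p\}]$, and everything reduces to estimating $G$.

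For the first (strong-type) assertion the hypothesis \eqref{leqno4.1} controls the $Mf$ term, so I only need $\|Gf\|_p\le C\|f\|_p$. Taking $q_0\le p$, I would apply Minkowski's inequality in $L^{p/q_0}$ (legitimate since $p/q_0\ge1$) to $\sum_N (N+1)^{-q_0}|f_N|^{q_0}$ and use that $T$ is measure preserving, so $\bigl\||f_N|^{q_0}\bigr\|_{p/q_0}=\|f\|_p^{q_0}$ for every $N$. This yields $\|Gf\|_p\le c_{q_0}\|f\|_p$, and combining with \eqref{leqno4.1} gives \eqref{leqno4.2} with $D_p=c_{q_0}(\widetilde{C_p}+1)$.

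For the second (weak-type) assertion, \eqref{leqno4.3} handles the $Mf$ term, and the task is the weak-$(1,1)$ bound $\mu(Gf>\lambda)\le (D/\lambda)\|f\|_1$ that yields \eqref{leqno4.4}. Here I would use a Calder\'on--Zygmund splitting at height $c\lambda$: write $f=g+b$ with $g=f\mathbf{1}_{\{|f|\le c\lambda\}}$ and $b=f\mathbf{1}_{\{|f|>c\lambda\}}$, so that $\mu(\{b\ne0\})\le\|f\|_1/(c\lambda)$ and $\|g\|_{q_0}^{q_0}\le(c\lambda)^{q_0-1}\|f\|_1$. The good part is immediate from the strong bound just proved: by Chebyshev and $\|Gg\|_{q_0}\le c_{q_0}\|g\|_{q_0}$ one gets $\mu(Gg>\lambda/2)\le C\lambda^{-1}\|f\|_1$.

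The bad part carries the real difficulty, and it is where I expect essentially all the work to lie. It cannot be handled by the $L^{q_0}$ bound alone since $b$ has no $L^{q_0}$ control, and, crucially, $Gf$ is genuinely \emph{not} dominated pointwise by any maximal average: tall, sparse configurations along an orbit can make $Gf\gg Mf$. The mechanism I would exploit is measure preservation together with the weight decay. Decompose $b$ into dyadic height layers $b_i=f\mathbf{1}_{\{2^ic\lambda<|f|\le 2^{i+1}c\lambda\}}$, with $\mu(\mathrm{supp}\,b_i)\lesssim \|f\|_1/R_i$ where $R_i\asymp 2^i\lambda$; for each layer split the defining sum at an index $N_i$. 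The part $N<N_i$ is nonzero only on $\bigcup_{N<N_i}T^{-a_N}(\mathrm{supp}\,b_i)$, of measure $\le N_i\,\mu(\mathrm{supp}\,b_i)$ by a union bound and measure preservation, while the tail $N\ge N_i$ is controlled in mean using the summable weights and the bound $|b_i|\le R_i$; optimising $N_i$ makes the per-layer estimate $\mu(Gb_i>\tau_i)\lesssim \|f\|_1/\tau_i$, pleasantly independent of the height $R_i$. The hard part will be summing these layer estimates, because one cannot choose thresholds with $\sum_i\tau_i^{q_0}\le(\lambda/2)^{q_0}$ while simultaneously keeping $\sum_i\tau_i^{-1}$ of order $1/\lambda$; the crude union over layers is too lossy, and one must instead estimate $\mu\bigl(\sum_i (Gb_i)^{q_0}>\cdot\bigr)$ with the superposition kept intact --- for instance through a single stopping-time/good-$\lambda$ argument organised by the maximal function, or, when $(a_k)$ is regular enough, by transferring $G$ to an $\ell^{q_0}$-valued convolution operator on $\mathbb{Z}$ and invoking vector-valued Calder\'on--Zygmund theory once a H\"ormander condition is read off from the decay of the weights. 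This across-scales weak-$(1,1)$ estimate for the bad part is the main obstacle; the strong-type half and the good part are routine.
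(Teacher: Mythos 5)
The paper itself contains no proof of this lemma: it is imported verbatim as Theorem~1 of Nair and Weber \cite{nairweber1}, and the proof environment that follows the statement in Section~\ref{S4} is actually the proof of Theorem~\ref{theorem1.4}. So your attempt can only be measured against \cite{nairweber1} and on its own terms. On its own terms, the strong-type half is correct and complete: the telescoping identity $C_{N+1}f-C_Nf=\frac{1}{N+1}(f\circ T^{a_N}-C_Nf)$ is right, the reduction to an exponent $q_0=\min\{q,p\}>1$ via monotonicity of $\ell^q$-norms is legitimate, and the bound $\|Gf\|_p\le c_{q_0}\|f\|_p$ obtained from Minkowski's inequality in $L^{p/q_0}$ together with measure preservation does yield \eqref{leqno4.2} with $D_p=c_{q_0}(\widetilde{C_p}+1)$. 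That part is a clean, self-contained argument.

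The weak-type assertion \eqref{leqno4.4} is not proved, and you say so yourself; this is a genuine gap rather than a routine omission, because the estimate $\mu(\{Gb>\lambda\})\lesssim \lambda^{-1}\|f\|_1$ for the bad part is essentially the whole content of that assertion, and none of the routes you sketch closes it. Concretely: (i) the obstruction you identify in summing the per-layer weak bounds is real and is not fixable by a cleverer choice of thresholds, and the natural H\"older step $\sum_N t_N^{q_0}\le(\sup_N t_N)^{q_0-1}\sum_N t_N$ is also doomed, since $\sum_N |b(T^{a_N}x)|/(N+1)$ diverges almost everywhere for any $b\ge 0$ that is not a.e.\ zero; (ii) the height truncation $b=f\mathbf{1}_{\{|f|>c\lambda\}}$ destroys the mean-zero structure of the bad part on which every Calder\'on--Zygmund proof of a weak $(1,1)$ bound for an operator of this kind relies, so even after transference to $\mathbb{Z}$ the standard machinery does not apply to your decomposition; (iii) the vector-valued Calder\'on--Zygmund/H\"ormander route you mention presupposes regularity of $(a_k)$ that the lemma does not assume --- its only hypotheses are the two maximal inequalities for an otherwise arbitrary sequence. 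The passage from the maximal inequalities \eqref{leqno4.1} and \eqref{leqno4.3} to the variational bounds is precisely the work done in \cite{nairweber1}; it cannot be recovered by the good/bad splitting as you have set it up, so the second half of the lemma remains unestablished in your proposal.
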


\begin{proof}
Suppose $(X, {\cal L}, \eta)$ denote a 
finite measure space i.e. 
$\eta (X) < \infty$.  
Now, let $(T_n)_{n\geq 1}$ denote a sequence of 
linear transformations of 
$L^p(X, {\cal L}, \eta )$ into measurable 
functions on $X$, such that each $T_n$ is 
continuous in measure, that is, such 
that if $||f-f_m||_p $ tends to $0$ 
as $m$ tends to $\infty$, then 
$||T_n f-T_nf_m||_p $  also tends 
to $0$ as
$m$ tends to $\infty$.  
Set $T^*f(x)= \sup _{n\geq 1}|T_nf(x)|$ 
for $f \in L^p(X, {\cal L}, \eta )$.  
We will say the family
$(T_n)_{n\geq 1}$ commutes with 
$w:X \to X$ if  $T^*f(w(x)) \leq  T^*g (x)$ 
where $g(x) = f(w(x))$ on $X$.  Now suppose
$\cal F$ is a family of ergodic $\eta$ 
preserving transformations on $X$, closed under 
composition.  
We will call $(T_n)_{n\geq 1}$ distributive 
if it commutes with all the elements of 
some family $\cal F$ on $X$.   We have the 
following result of S. Sawyer 
\cite{sawyer}.

\bigskip
\begin{lemma}
\label{lemma4.5}
Let $(T_n)_{n\geq 1}$ be a distributive 
sequence of linear operators on 
$L^p(X, {\cal L}, \eta )$, where 
each $T_n$ is 
continuous in measure and 
maps  $L^p(X, {\cal L}, \eta )$ to measurable 
functions on $X$.  
If $p \in [1,2]$ and if 
$T^*f(x) < \infty$ $\eta$ almost everywhere, 
then there exists a uniform 
constant $C>0$ such that
\begin{equation}
\label{equation4.5}
\eta 
\bigl( \{ x : T^*f(x) \geq \lambda  \} 
\bigr) 
\leq {C\over \lambda ^p} \int _X|f(x)|^pd\eta ,
\end{equation}
for all $\lambda >0$ and 
$f \in L^p(X, {\cal L}, \eta )$. 
\end{lemma}

\bigskip
The conclusion of Lemma \ref{lemma4.5} 
is that if $T^*f(x) < \infty$ $\eta$ almost everywhere, then the operator $T^*$ satisfies a weak$^* (p,p)$.  
If there is a $C>0$ such that 
$||T^*f||_p \leq C||f||_p$ we say 
$T^*$ satisfies a 
{\it strong $(p,p)$ inequality}.  
It is easy to check that strong $(p,p)$ 
inequalities imply the corresponding 
weak $(p,p)$ inequalities for 
the operator $T^*$.  
On the other hand it follows 
from the Marcinkiewicz interpolation 
theorem 
\cite{steinweiss}
that  a weak $(p,p)$ inequality 
implies a strong $(q,q)$ inequality 
if $q>p$.  
We now specialize \eqref{A_N_definition}
to the situation 
where $f$ is defined on $X=\mathbb{R}$,
${\cal L}$ is the Lebesgue algebra 
on $X=\mathbb{ R}$
and $\eta = \mu _{\alpha , \beta }$:
$$
T_N f(x) 
:={1\over N} 
\sum _{n=1}^N f(T_{\alpha , \beta}^{k_n}(x)) 
\eqno (n=1,2, \dots )
$$
with $f(x) = \zeta (\sigma + ix)$.  
One checks readily that 
$(T_N)_{N\geq 1}$
commutes with $(T^n_{\alpha , \beta })_{n\geq 1}$ 
and is therefore distributive. 
In light of 
Lemma \ref{lemma4.5}, 
we see that Theorem 
\ref{theorem1.4} 
follows by 
recalling our estimate 
$$
\int _{{\mathbb{ R}}} |\zeta (\sigma + i x) |^2 
d\mu _{\alpha , \beta } 
~\leq~ 
{C\over \pi} 
\int _{{\mathbb{R}}}
{|\zeta (\sigma + i \tau )|^2 \over 1+ \tau ^2} 
d \tau 
~\leq~   { C \over \pi \sigma}
\left | {\zeta (2\sigma ) \over 2}+ 
{\zeta ( {2\sigma -1}) \over 2\sigma -1} \right |
$$
arising in \eqref{CsurPi}
in the Proof of 
Theorem \ref{theorem5}, 
Theorem \ref{theorem1.4} 
is proved. 
\end{proof}

It is possible to say more about 
specific sequences and families of sequences.  
Suppose ${\cal N} =(N_k)_{k\geq 1}$
and  
$(a_n)_{n\geq 1}$ are
sequences of natural numbers.
Consider the definition of $C_N$
in \eqref{CNfak_definition}.
Let
$$
Sf(x) 
= S( {\cal N} ,f) (x)  
:= 
\ \Bigl( \sum _{k\geq 1} 
\bigl|C_{N_{k+1}}f(x)-C_{N_k}f(x)\bigr|^2
\Bigr)^{1\over 2}.
$$ 
In the first instance, we are interested in 
conditions under which there are constants $C>0$ 
such that
\begin{equation}
\label{leqno4.5}
\bigl\|Sf \bigr\|_2 \leq C \|f\|_2. 
\end{equation}

We have the following lemma taken from 
\cite{nair2}. 

\begin{lemma}
\label{lemma4.6}
Suppose 
$1 \ < \ a \ \leq \ {N_{k+1}\over N_k}
\ \leq \ b \ < \ \infty$ and 
$a_k \ = \ \phi (n)$, 
where $\phi$ is a non-constant
polynomial mapping the natural numbers to 
themselves.  
Then there is a constant
$C \ > \ 0$ such that 
\eqref{leqno4.5} 
holds.
\end{lemma}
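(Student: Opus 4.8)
The plan is to peel \eqref{leqno4.5} off into a purely numerical estimate on exponential–sum multipliers by spectral theory, and then to prove that estimate by the Hardy--Littlewood circle method, the convergence of the resulting series being forced by the lacunary growth of $(N_k)$. First I would introduce the Koopman operator $Uf=f\circ T$, an isometry of $L^2(X,\beta,\mu)$, write $d=\deg\phi$, and for $z=e^{2\pi i\theta}\in\mathbb{T}$ set
$$
m_N(z)=\frac1N\sum_{k=0}^{N-1}z^{a_k},\qquad a_k=\phi(k),
$$
so that $C_Nf=\frac1N\sum_{k=0}^{N-1}U^{a_k}f$ is $m_N$ with $U$ substituted for $z$. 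Exactly as in the proof of Theorem \ref{theorem1} in Section \ref{S3}, the sequence $n\mapsto\langle U^nf,f\rangle$ $(n\ge0)$, extended by conjugation, is positive definite, so the Bochner--Herglotz theorem supplies a finite spectral measure $\omega_f$ on $\mathbb{T}$ with $\omega_f(\mathbb{T})=\|f\|_2^2$ and $\langle U^nf,f\rangle=\int_{\mathbb{T}}z^n\,d\omega_f(z)$. Since $U$ is an isometry, every inner product $\langle U^{a_i}f,U^{a_j}f\rangle$ collapses to $\int_{\mathbb{T}}z^{a_i-a_j}\,d\omega_f$, whence
$$
\|C_{N_{k+1}}f-C_{N_k}f\|_2^2=\int_{\mathbb{T}}\bigl|m_{N_{k+1}}(z)-m_{N_k}(z)\bigr|^2\,d\omega_f(z).
$$
Summing in $k$ and using Tonelli gives $\|Sf\|_2^2=\int_{\mathbb{T}}\Phi(z)\,d\omega_f(z)$ with $\Phi(z)=\sum_{k\ge1}|m_{N_{k+1}}(z)-m_{N_k}(z)|^2$, so \eqref{leqno4.5} holds with $C^2=\sup_z\Phi(z)$, provided I establish the uniform multiplier bound $\sup_{z\in\mathbb{T}}\Phi(z)<\infty$.

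For that bound I would run the circle method scale by scale, guided by the model case $d=1$. There $m_N(e^{2\pi i\theta})$ is a normalized geometric sum, and with $\psi=\|c_1\theta\|$ one has the two complementary estimates $|m_N|\lesssim(N\psi)^{-1}$ (cancellation) and $|m_{N'}-m_N|\lesssim(N'-N)\psi$ (the average is Lipschitz in the scale), so each difference is $\lesssim\min(N_k\psi,(N_k\psi)^{-1})$. For general $d$ these two mechanisms persist but are furnished by deeper inputs: away from rationals of small denominator, Weyl's inequality gives power decay $|m_N(e^{2\pi i\theta})|\ll N^{-\delta}$, so both $m_{N_k}$ and $m_{N_{k+1}}$ are tiny and their difference is negligible; near a rational $p/q$ of small denominator the sum factors as a Gauss-sum coefficient $\mathfrak{S}(p/q)$, which is \emph{independent of $N$}, times a smooth oscillatory integral factor $I\!\left(N^{d}(\theta-p/q)\right)$. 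Because $\mathfrak{S}(p/q)$ cancels in the difference, on such a major arc $|m_{N_{k+1}}-m_{N_k}|$ is governed only by the scale variation of $I$ at the dilation parameter $u_k:=N_k^{d}(\theta-p/q)$, where $I$ is Lipschitz for bounded argument and decays like $u^{-1/d}$ for large argument.

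Finally I would sum in $k$ using the hypothesis $a\le N_{k+1}/N_k\le b$. The lower bound $a>1$ forces $N_k$, hence $u_k$, to grow by the fixed geometric factor $(N_{k+1}/N_k)^{d}\ge a^{d}>1$, while the upper bound $b<\infty$ keeps neighbouring scales comparable so that each major-arc jump is $\lesssim|u_{k+1}-u_k|\lesssim u_{k+1}$. For a fixed $\theta$ lying in the major arc of a given $p/q$ there is thus a single transition index $k_0$ with $u_{k_0}\asymp1$: the terms $k<k_0$ are dominated by $a^{-d(k_0-k)}$ (small dilation, Lipschitz regime) and those $k\ge k_0$ by $a^{-(k-k_0)}$ (large dilation, decay regime), so their squares sum to $\lesssim\sum_{j\in\mathbb{Z}}a^{-2|j|}<\infty$ uniformly; the minor-arc scales contribute a geometrically summable $\sum_k N_k^{-2\delta}$, and the points where $m_N$ is stationary contribute $0$. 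The main obstacle is precisely this uniform major/minor organisation for $d\ge2$: unlike the elementary geometric estimate available when $d=1$, one must make the Gauss-sum/smooth-factor approximation and the Weyl decay uniform over the continuum of scales $N\in[N_k,N_{k+1}]$ and over all rationals simultaneously, arranging the contributions of the successive rational approximations of $\theta$ (via Dirichlet's theorem) into the single-rational geometric picture above. This is exactly the technical content imported from \cite{nair2}, and it is where the bulk of the work lies.
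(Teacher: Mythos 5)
The paper does not actually prove this lemma: it is imported verbatim from \cite{nair2}, so there is no in-text argument to compare against. Your opening reduction is correct and is precisely the first step of that reference. Since the Koopman operator $U$ is an isometry, $(\langle U^nf,f\rangle)_{n}$ is positive definite, Bochner--Herglotz produces the spectral measure $\omega_f$ of total mass $\|f\|_2^2$, and Tonelli converts \eqref{leqno4.5} into the purely analytic statement $\sup_{\theta}\sum_{k\ge1}\bigl|m_{N_{k+1}}(e^{2\pi i\theta})-m_{N_k}(e^{2\pi i\theta})\bigr|^2<\infty$. That half of your write-up is complete and rigorous, and your treatment of the degree-one model case is correct.

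The gap is in the second half. For $d=\deg\phi\ge 2$ the ``single transition index $k_0$'' picture is valid only relative to one fixed rational $p/q$: a fixed irrational $\theta$ drifts through the major arcs of an infinite sequence of rationals as $N_k$ grows (Dirichlet supplies a new approximation at each scale), so your geometric series $\sum_j a^{-2|j|}$ must be summed again over every rational that ever becomes relevant, and that outer sum converges only because of the Gauss-sum decay $|\mathfrak{S}(p/q)|\ll q^{-\delta}$ --- which you mention but never use quantitatively. As written, $\mathfrak{S}(p/q)$ ``cancels in the difference'' and then plays no further role, so nothing in your estimate prevents the total over all rationals from diverging. Similarly, the error in the major-arc approximation $m_N\approx\mathfrak{S}(p/q)\,I(N^{d}(\theta-p/q))$ is of size about $q/N$ and must be shown square-summable along $N_k$ uniformly in the scale-dependent choice of $q$, and Weyl's inequality on the minor arcs yields a bound depending on the denominator of the current approximant rather than a clean $N^{-\delta}$ for fixed $\theta$. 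You correctly identify all of this as ``where the bulk of the work lies'' and defer it to \cite{nair2}; that is an accurate description of the proof's architecture, but it leaves the proposal a correct strategy rather than a proof --- which, to be fair, is no less than the paper itself supplies for this lemma.
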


The following lemmas are taken from 
\cite{nairweber1}.

\begin{lemma}
\label{lemma4.7}
Suppose $1 \ < \ a \ \leq \ {N_{k+1}\over N_k}
\ \leq \ b \  < \ \infty$ and $a_k \ = 
\ \phi (p_n)$, where $\phi$ is as in
Lemma \ref{lemma4.6} 
and $p_n$ is the $n^{th}$ rational prime.
Then there is a constant
$C \ > \ 0$  such that 
\eqref{leqno4.5} 
holds.
\end{lemma}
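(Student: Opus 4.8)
The plan is to follow the proof of Lemma \ref{lemma4.6}, replacing the Weyl-sum estimates for the polynomial sequence $(\phi(n))_{n\ge1}$ by Vinogradov-type estimates for $(\phi(p_n))_{n\ge1}$, and to reduce the whole matter to a single uniform multiplier bound. Let $Uf = f\circ T_{\alpha,\beta}$, an $L^2$-isometry since $T_{\alpha,\beta}$ is measure preserving. Exactly as in Section \ref{S3}, the sequence $(\langle U^n f, f\rangle)_{n\in\mathbb{Z}}$ is positive definite, so by the Bochner--Herglotz theorem \cite{katznelson} there is a measure $\omega_f$ on $\mathbb{T}$ with $\langle U^n f, f\rangle = \int_{\mathbb{T}} z^n\, d\omega_f(z)$ and $\omega_f(\mathbb{T}) = \|f\|_2^2$. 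Writing $z = e^{2\pi i\theta}$ and
\[
m_N(\theta) := \frac{1}{N}\sum_{n=1}^{N} e^{2\pi i \phi(p_n)\theta},
\]
one gets $\|C_{N_{k+1}}f - C_{N_k}f\|_2^2 = \int_{\mathbb{T}} |m_{N_{k+1}}(\theta) - m_{N_k}(\theta)|^2\, d\omega_f(\theta)$, whence, by Tonelli,
\[
\|Sf\|_2^2 = \int_{\mathbb{T}} G(\theta)\, d\omega_f(\theta) \le \|G\|_\infty\,\|f\|_2^2, \qquad G(\theta) := \sum_{k\ge1}\bigl|m_{N_{k+1}}(\theta) - m_{N_k}(\theta)\bigr|^2.
\]
Thus \eqref{leqno4.5} reduces to the \emph{uniform} bound $\sup_\theta G(\theta) < \infty$, the symbol being the only thing that has changed from Lemma \ref{lemma4.6}.

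Next I would pass from the average over the first $N$ primes to a von Mangoldt weighted sum. Taking $x = p_N$, so that $\pi(x) = N$, partial summation together with the prime number theorem yield
\[
m_N(\theta) = \frac{1}{x}\sum_{m\le x}\Lambda(m)\, e^{2\pi i \phi(m)\theta} + \mathcal{E}_N(\theta),
\]
where $\mathcal{E}_N$ absorbs the prime-power terms and the summation-by-parts kernel. Using the lacunarity $a \le N_{k+1}/N_k \le b$, the point to verify is that $\sum_k |\mathcal{E}_{N_{k+1}}(\theta) - \mathcal{E}_{N_k}(\theta)|^2 \le C$ uniformly in $\theta$, so that the errors may be discarded. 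One is then reduced to the exponential sums $\frac{1}{x}\sum_{m\le x}\Lambda(m)\,e^{2\pi i\phi(m)\theta}$, to which I would apply the Hardy--Littlewood circle method: for fixed $\theta$, Dirichlet's theorem gives $\theta = a/q + \eta$ with $(a,q)=1$ and $|\eta|\le 1/(qQ)$.

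On the minor arcs, where $q$ is large relative to the scale, Vinogradov's and Hua's estimates for $\Lambda$-weighted polynomial exponential sums give a power saving $|m_N(\theta)| \ll_{\phi} x^{-\delta}$ for some $\delta = \delta(\deg\phi) > 0$; since the scales are lacunary, $\sum_k |m_{N_{k+1}}(\theta) - m_{N_k}(\theta)|^2 \ll \sum_k N_k^{-2\delta}$ converges as a geometric series, with bound independent of $\theta$. On the major arcs the sum has a main term of the form (a Ramanujan-sum/singular-series factor) times a smooth factor in $(\eta, x)$; here I would estimate each difference $m_{N_{k+1}} - m_{N_k}$ through a derivative bound in the scale variable and again use lacunarity to sum the resulting telescoping geometric series. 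Splitting $\{k\}$ at the index where $N_k \asymp q$ and combining the two regimes gives $G(\theta) \le C$ with $C$ depending only on $\deg\phi$, $a$ and $b$, which completes the reduction.

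The main obstacle is the minor-arc input: one needs Vinogradov--Hua exponential-sum bounds for $\phi$ at primes that are uniform in $\theta$ and strong enough that the geometric series over the lacunary scales converges, and one must carefully control the error $\mathcal{E}_N$ produced in passing between the count over the first $N$ primes, the prime-counting weight, and the von Mangoldt weight. These prime exponential-sum estimates are precisely where the argument is genuinely harder than the integer case of Lemma \ref{lemma4.6}, in which Weyl's inequality alone suffices.
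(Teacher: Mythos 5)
First, a point of comparison: the paper does not prove this lemma at all --- it is quoted verbatim from \cite{nairweber1} (``The following lemmas are taken from\dots''), so there is no internal proof to measure your attempt against. Your overall architecture is nonetheless the right one, and it is the one used in that literature: transfer via the Bochner--Herglotz theorem to the uniform multiplier bound $\sup_\theta \sum_k |m_{N_{k+1}}(\theta)-m_{N_k}(\theta)|^2 < \infty$ with $m_N(\theta)=N^{-1}\sum_{n\le N} e^{2\pi i \phi(p_n)\theta}$, and then attack the resulting exponential sums over primes by the circle method. The reduction in your first paragraph is correct and is exactly the standard transference step.

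The gap is in the arc analysis, which is where all the work lies. Your minor-arc claim ``$|m_N(\theta)|\ll_\phi x^{-\delta}$'' cannot be uniform in $\theta$: Vinogradov--Hua bounds give a power saving only when $\theta$ lies in the minor arcs \emph{at scale $x$}, and for $\theta$ at or near a rational $a/q$ with small denominator there is no saving at any scale (for instance $m_N(a/q)$ tends to a nonzero arithmetic constant when $\phi$ is linear and $q$ is small). More seriously, your major-arc treatment uses a single Dirichlet approximation $\theta = a/q+\eta$ ``for fixed $\theta$,'' but $q$ then depends on the scale parameter $Q$, so ``splitting $\{k\}$ at the index where $N_k\asymp q$'' is not well defined: for Liouville-type $\theta$ the relevant denominator changes infinitely often as $k$ grows, and the telescoping/derivative argument does not close. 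The standard way to make the bound genuinely uniform in $\theta$ (Bourgain, Wierdl, and the argument behind \cite{nairweber1}) is to construct a global approximating multiplier of the form $\sum_{q\le Q}\sum_{(a,q)=1} \frac{\tau(a,q)}{\varphi(q)}\,\Phi_N(\theta-a/q)$ supported on a union of major arcs, prove $\sup_{\theta}|m_N(\theta) - (\text{approximant})(\theta)| \ll N^{-\delta}$ via Siegel--Walfisz on the major arcs and Vinogradov on the minor arcs, and then bound the square function of the approximants using the $\ell^2$-summability over $q$ of the arithmetic factors $|\tau(a,q)|/\varphi(q)$ together with the smoothness of the kernels $\Phi_N$ and the lacunarity of $(N_k)$. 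Without that multi-$q$ decomposition (or an equivalent device), your argument does not produce a constant independent of $\theta$, and that uniformity is the entire content of the lemma.
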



Let
$\theta \ = \ (\theta _k )_{k\geq 1}$ 
be a ${\color{black}{\mathbb Z}}$ valued
sequence of independent identically distributed 
random variables with basic probability space 
$(\Omega , {\cal B}, P)$.  
We assume the $\sigma$-algebra
${\cal B}$ is $P$ complete and that 
there exists $\gamma \ > \ 0$ such that
${\color{black}{\mathbb E }}\bigl((\theta _1^+)^{\gamma }
\bigr) \ < \ \infty$.  
Here we have used
$\theta _1^+$ to denote $\max (\theta _1,0)$.  
Consider a strictly
increasing sequence $(q_k)_{k\geq 1}$ of natural 
numbers for which
$S$ is a bounded map from $L^2$ to itself 
when $a_k \ = \ q_k$ and
$N_k \ = \ [\rho ^k ]$ for some  $\rho \ > \ 0$, 
that is  such that
\begin{equation}
\label{leqno4.6}
\Bigl\|
\bigl(\sum _{k\geq 1} 
\bigl|
C_{N_{k+1}}(f) \ - 
\ C_{N_k}(f)
\bigr|^2(x)
\bigr)^{1\over 2}
\Bigr\|_2
\ \leq \  
C \bigl\|f \bigr\|_2. 
\end{equation}
Also assume there exists $\delta \ \in \ (0,1)$ 
such that
\begin{equation}
\label{leqno4.7}
q_k \ = \ o(2^{k^{\delta }}) 
\end{equation}
with
\begin{equation}
\label{leqno4.8}
\limsup _{k\to \infty} 
{\log \ q _{2k} \over \log \ q_k} \ 
< \ \infty 
\end{equation}
and 
$$
P(q_1 \ + \ \theta _1 \ \geq \ 0) \ = \ 1.
$$
Let
$$
C_N^{\theta}(f) \ 
:= \ {1\over N}
\sum _{k=1}^Nf(T^{q_k+\theta _k}x).
$$

\begin{lemma}
\label{lemma4.8}
If $(q_k)_{k\geq 1}$ 
satisfies \eqref{leqno4.6}, \eqref{leqno4.7}
and \eqref{leqno4.8}, 
then there is a set $\Omega _0$ contained 
in $\Omega $ of full $P$ measure
such that if $\omega \ \in \ \Omega _0$ there exists
a constant
$C>0$ such that
\begin{equation}
\label{equation4.8}
\Bigl\|
\bigl(\sum _{k\geq 1} 
\bigl|C_{N_{k+1}}^{\theta }(f) \ - \ 
C_{N_k}^{\theta }(f)
\bigr|^2(x)
\bigr)^{1\over 2}
\Bigr\|_2
\ \leq \  C\|f\|_2.
\end{equation}
\end{lemma}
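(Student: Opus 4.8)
The plan is to pass to the spectral (frequency) side, split the perturbed averaging symbol into a deterministic ``mean'' part and a random ``fluctuation'' part, and then show that the fluctuation is summable uniformly in frequency for $P$-almost every $\omega$. First I would invoke the spectral theorem for the Koopman operator $Uf=f\circ T$ on $L^2(X,\beta,\mu)$: to each $f$ corresponds a finite positive spectral measure $\nu_f$ on $\mathbb{T}=[0,1)$ with $\|f\|_2^2=\nu_f(\mathbb{T})$, and writing $D_k^{\theta}(t)=\frac{1}{N_{k+1}}\sum_{j=1}^{N_{k+1}} e^{2\pi i(q_j+\theta_j)t}-\frac{1}{N_k}\sum_{j=1}^{N_k} e^{2\pi i(q_j+\theta_j)t}$ for the symbol of $C_{N_{k+1}}^{\theta}-C_{N_k}^{\theta}$, the left-hand side of \eqref{equation4.8} (call it $S^{\theta}f$) satisfies $\|S^{\theta}f\|_2^2=\int_{\mathbb{T}}\big(\sum_{k\ge1}|D_k^{\theta}(t)|^2\big)\,d\nu_f(t)$. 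Since \eqref{equation4.8} is required for every measure preserving system, a transference (Calder\'on) argument shows it is equivalent to the uniform symbol estimate $\sup_t\sum_{k\ge1}|D_k^{\theta}(t)|^2<\infty$ for $P$-almost every $\omega$; and the hypothesis \eqref{leqno4.6} is exactly the statement $\sup_t\sum_k|D_k(t)|^2\le C^2$, where $D_k(t)$ is $D_k^{\theta}(t)$ with every $\theta_j$ set to $0$.

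Next I would decompose via $e^{2\pi i\theta_j t}=\psi(t)+\xi_j(t)$, where $\psi(t)=\mathbb{E}[e^{2\pi i\theta_1 t}]$ is the common characteristic function and $\xi_j(t)=e^{2\pi i\theta_j t}-\psi(t)$. This gives $D_k^{\theta}(t)=\psi(t)D_k(t)+\Delta R_k(t)$, where $\Delta R_k=R_{N_{k+1}}-R_{N_k}$ and $R_N(t)=\frac1N\sum_{j=1}^N \xi_j(t)e^{2\pi i q_j t}$ is a normalized sum of independent, mean-zero, uniformly bounded ($|\xi_j|\le2$) processes in $t$. As $|\psi(t)|\le1$, the mean part is controlled by \eqref{leqno4.6}, so the problem reduces to showing $\sup_t\sum_k|\Delta R_k(t)|^2<\infty$ almost surely; using $|\Delta R_k|\le|R_{N_{k+1}}|+|R_{N_k}|$ together with Tonelli, it suffices to prove $\sum_k \mathbb{E}\big[\sup_t|R_{N_k}(t)|^2\big]<\infty$, after which monotone convergence delivers the almost sure conclusion.

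The core estimate is on $\mathbb{E}[\sup_t|R_N(t)|^2]$. Here I would first truncate: since $P(q_1+\theta_1\ge0)=1$ forces $\theta_j\ge-q_1$, and since $\mathbb{E}((\theta_1^+)^{\gamma})<\infty$, choosing thresholds $T_j=j^{2/\gamma}$ makes $\sum_j P(|\theta_j|>T_j)<\infty$, so by Borel--Cantelli the truncated variables coincide with $(\theta_j)$ for all large $j$, almost surely. On the truncated variables the increments obey $|\xi_j(s)e^{2\pi iq_js}-\xi_j(t)e^{2\pi iq_jt}|\ll(q_j+T_j)|s-t|$, whence a chaining (Dudley) bound for the sum of independent bounded processes $t\mapsto R_N(t)$ is governed by the number of frequencies rather than their magnitude, giving $\mathbb{E}[\sup_t|R_N(t)|^2]\ll\frac{\log(q_N+T_N)}{N}$. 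Feeding in \eqref{leqno4.7} in the form $\log q_N\ll N^{\delta}$ yields $\mathbb{E}[\sup_t|R_N(t)|^2]\ll N^{\delta-1}$, and along $N=N_k=[\rho^k]$ this is $\ll\rho^{k(\delta-1)}$, whose sum converges because $\delta<1$; the regularity hypothesis \eqref{leqno4.8} is used to keep $\log q_{N_{k+1}}$ and $\log q_{N_k}$ comparable so that this geometric summation is legitimate.

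The step I expect to be the main obstacle is exactly this uniform-in-$t$ control of $R_N$: one must obtain the metric-entropy bound involving $\log q_N$ (the logarithm of the largest frequency) rather than $q_N$ itself, while simultaneously handling the facts that the ``coefficients'' $\xi_j(t)$ themselves depend on $t$ and that $\theta_1$ is only assumed to have a finite moment of some possibly small order $\gamma>0$. The truncation at $T_j=j^{2/\gamma}$, the chaining in the natural $L^2(\Omega)$ increment metric, and the subexponential growth $q_k=o(2^{k^{\delta}})$ with $\delta<1$ must be balanced against the geometric sparsity $N_k=[\rho^k]$ to close the Borel--Cantelli argument; arranging these quantitative trade-offs to fit together is the crux.
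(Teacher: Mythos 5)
The paper does not actually prove Lemma \ref{lemma4.8}: it is imported from \cite{nairweber1} with no argument supplied, so there is no in-paper proof to measure your attempt against; the comparison has to be with the method of the cited source, and your outline is essentially that method. The route you describe --- transfer \eqref{leqno4.6} to a uniform bound on the symbols $D_k(t)$, split $e^{2\pi i(q_j+\theta_j)t}$ into the mean part $\psi(t)e^{2\pi i q_j t}$ and a centred fluctuation, absorb the mean part into the deterministic hypothesis since $|\psi|\le 1$, and kill the fluctuation almost surely via a Salem--Zygmund/Dudley bound $\mathbb{E}\sup_t|R_N(t)|^2\ll \log(q_N+T_N)/N$ plus Borel--Cantelli along $N_k=[\rho^k]$ --- is sound in outline and is the standard one for such random-perturbation square-function results. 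Three details still need to be nailed down. First, the equivalence of \eqref{leqno4.6} with $\sup_t\sum_k|D_k(t)|^2\le C^2$ holds only if \eqref{leqno4.6} is read as a hypothesis valid for every dynamical system (so that it can be tested against point spectral measures); as literally written in the paper it is an $L^2$ bound for a single system, and you should state explicitly that you are taking the universal reading, which is what the source intends. Second, after truncating at $T_j=j^{2/\gamma}$ the summands are no longer centred at $\psi(t)$; you must either recentre at $\mathbb{E}\,e^{2\pi i\tilde\theta_j t}$ and control the drift, which is at most $2P(|\theta_j|>T_j)\ll j^{-2}$ and hence harmless, or invoke the almost-sure coincidence of the truncated and untruncated sequences beyond a random finite index --- one of these has to be written out. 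Third, your stated use of \eqref{leqno4.8} (keeping $\log q_{N_{k+1}}$ and $\log q_{N_k}$ comparable) is not obviously where that hypothesis earns its keep; in arguments of this type the doubling condition on $\log q_k$ is what allows the entropy integrals to be summed over dyadic blocks of the index $j$, and your single-scale chaining as described never visibly invokes it, which suggests the quantitative bookkeeping in the final step has not actually been carried through. None of these is a wrong turn, but the last point is where I would expect a fully written proof to require real work.
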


Consider two strictly increasing sequences of 
natural numbers $(q_k)_{k\geq 1}$
and ${\cal N} \ = \ (N_k)_{k\geq 1}$ such that 
if $a_k \ = \ q_k$ then
\eqref{leqno4.6} 
holds.  
Let $I_q \ = \ [2^{c_q},2^{c_{q+1}})$ 
denote the $q^{th}$ interval
of the form $[2^a,2^{a+1})$ containing an element 
of ${\cal N}$.  Let
$\Phi (N) \ = \ 
(\log [q_n \ + \ q \ + 2] )^{1\over 2}$ 
if $N \ \in \ I_q$
and assume
\begin{equation}
\label{leqno4.9}
\sum _{N \in \ {\cal N}} {\Phi ^2(N) \over N} \ 
< \ \infty . 
\end{equation}
Let $\phi \ = \ (\phi _k )_{k \geq 1}$ be a 
sequence of independent, 
identically distributed random variables defined on 
a basic probability space $(\Omega ,
\beta , P)$ with $\phi _1 \ \in 
\ L^1(\Omega , \beta , P)$ such that
\begin{equation}
\label{leqno4.10}
{\color{black}{\mathbb E}}\Bigl\{ \sup _{N\in \ 
\textcolor{black}{\mathbb{N} \setminus\{0\}}
}
\bigl({\sum _{k=1}^N(\phi _k \ - 
\ {\color{black}{\mathbb E}}(\phi _k))^2 \over N}
\bigr)^{1\over 2}
\Bigr\} \ 
< \ \infty .
\end{equation}
Let
$$
w_N^{\phi}(f) \ 
:= \ {1\over N} \sum _{k=1}^N
\phi _kf(T^{q_k}x).
\eqno (N \ = \ 1,2,\cdots )
$$
\begin{lemma}
\label{lemma4.9}
Suppose 
$(q_k)_{k\geq 1}$, ${\cal N } \ 
= \
(N_k)_{k\geq 1}$ and $\phi \ 
= \ (\phi _ k)_{k \geq 1}$ satisfy 
\eqref{leqno4.8}, \eqref{leqno4.9}
and
\eqref{leqno4.10}.
Then for 
almost all $\omega$ in $\Omega$
\begin{equation}
\label{equation4.9}
\bigl\|(\sum _{k\geq 1} |w_{N_{k+1}}^{\phi _k}(f) \ - \ 
w_{N_k}^{\phi _k}(f)|^2)^{1\over 2}
\bigr\|_2
\ \leq \  C\|f\|_2.
\end{equation}
\end{lemma}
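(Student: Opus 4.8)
The plan is to separate the random weights into their mean and a centred fluctuation, dispose of the mean part by the standing hypothesis, and concentrate all the real work on the fluctuation. Write $m=\mathbb{E}(\phi_1)$ and $\psi_k=\phi_k-\mathbb{E}(\phi_k)$, so the $\psi_k$ are i.i.d., centred, independent of the dynamics, and
$$
w_N^{\phi}(f)(x) = m\, C_N f(x) + \frac{1}{N}\sum_{k=1}^N \psi_k\, f(T^{q_k}x),
$$
where $C_N$ is the unweighted average \eqref{CNfak_definition} with $a_k=q_k$. Denoting the centred term by $\widetilde{w}_N(f)(x)$ and $\widetilde{S}f=\bigl(\sum_{k\geq 1}|\widetilde{w}_{N_{k+1}}(f)-\widetilde{w}_{N_k}(f)|^2\bigr)^{1/2}$, Minkowski's inequality bounds the square function of $w^{\phi}$ along $\mathcal{N}$ by $|m|$ times that of $C$ plus $\widetilde{S}f$. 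The first contribution is at most $|m|\,C\|f\|_2$ at once by \eqref{leqno4.6}, so the entire problem reduces to proving, for almost every $\omega$, a bound $\|\widetilde{S}f\|_2\leq C\|f\|_2$ uniformly in $f$.

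For the fluctuation I would first record the annealed estimate. Fixing $f$ and using Fubini together with the independence of $(\Omega,\beta,P)$ from $(X,\mu)$, the centring and independence of the $\psi_k$ kill all off-diagonal terms, leaving
$$
\mathbb{E}_{\omega}\,\bigl\|\widetilde{S}f\bigr\|_2^2 = \mathbb{E}(\psi_1^2)\int_X \sum_{k\geq 1}\Bigl[\bigl(\tfrac{1}{N_{k+1}}-\tfrac{1}{N_k}\bigr)^2\sum_{j\leq N_k}|f(T^{q_j}x)|^2 + \tfrac{1}{N_{k+1}^2}\sum_{N_k<j\leq N_{k+1}}|f(T^{q_j}x)|^2\Bigr]\,d\mu(x).
$$
Condition \eqref{leqno4.10} forces $\mathbb{E}(\psi_1^2)<\infty$ (otherwise the supremum in \eqref{leqno4.10} diverges by the strong law). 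Since $T$ is measure preserving, each $\int_X|f(T^{q_j}x)|^2\,d\mu$ equals $\|f\|_2^2$, and the resulting numerical series telescopes (both summands are dominated by $\tfrac{1}{N_k}-\tfrac{1}{N_{k+1}}$), so $\mathbb{E}_{\omega}\|\widetilde{S}f\|_2^2\leq C\|f\|_2^2$ with $C$ independent of $f$.

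The genuine content is to upgrade this to a single full-measure set $\Omega_0$ on which the operator $f\mapsto\widetilde{S}f$ is bounded uniformly in $f$: the annealed bound only gives, for each fixed $f$, that $\|\widetilde{S}f\|_2<\infty$ on an $f$-dependent full-measure set, and one cannot interchange $\mathbb{E}_\omega$ with the supremum over $f$ defining the operator norm. Here the plan is a maximal-function argument over the dyadic blocks $I_q=[2^{c_q},2^{c_{q+1}})$: within each block the supremum of the centred weighted sums is controlled by a union bound over the realisations of the $\psi_k$, whose cost is exactly the factor $\log[q_n+q+2]$ appearing in $\Phi$; the growth-regularity \eqref{leqno4.8} keeps the number of relevant scales per block under control, the maximal second-moment bound \eqref{leqno4.10} caps the per-block weight energy uniformly in $N$, and the Menshov--Rademacher type summability \eqref{leqno4.9}, $\sum_{N\in\mathcal{N}}\Phi^2(N)/N<\infty$, makes the block contributions summable. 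A Borel--Cantelli argument then selects $\Omega_0$ of full $P$ measure, and the density of a countable family in $L^2$ together with continuity in measure of the operators extends \eqref{equation4.9} from that family to all $f\in L^2$.

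The main obstacle is precisely this last passage from the annealed to the quenched operator-norm bound: controlling, simultaneously over all $f$ and for almost every $\omega$, the supremum of the randomly weighted partial sums inside each dyadic block. This is where the logarithmic entropy factor in $\Phi$ is unavoidable and where \eqref{leqno4.8}, \eqref{leqno4.9} and \eqref{leqno4.10} must be used in concert; by comparison the mean part and the measure-preservation reduction in the annealed step are routine.
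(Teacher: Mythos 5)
The paper itself does not prove this lemma --- it is quoted from \cite{nairweber1} --- so there is no internal proof to compare against; judged on its own, your proposal gets the routine parts right but leaves the actual content of the lemma unproved. The decomposition $\phi_k=\mathbb{E}(\phi_1)+\psi_k$, the disposal of the mean part via the standing hypothesis \eqref{leqno4.6}, and the annealed computation $\mathbb{E}_{\omega}\|\widetilde{S}f\|_2^2\leq C\,\mathbb{E}(\psi_1^2)\|f\|_2^2$ (including the telescoping bound and the observation that \eqref{leqno4.10} forces $\mathbb{E}(\psi_1^2)<\infty$) are all correct. But everything after that is a description of the difficulty rather than an argument, and the two devices you invoke to close it would not work as stated: a ``union bound over the realisations of the $\psi_k$'' is not a meaningful operation for real-valued random variables, and the final passage from a countable dense family of $f$'s to all of $L^2$ is circular, since $\widetilde{S}$ is sublinear and extending $\|\widetilde{S}f\|_2\leq C\|f\|_2$ from a dense set requires $\|\widetilde{S}(f-f_n)\|_2\to 0$, which presupposes the very operator bound being proved.

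The missing idea is spectral transference. By the Bochner--Herglotz/spectral theorem (which the paper itself uses in Section \ref{S3}), for any $f\in L^2$ one has
$$
\bigl\|\widetilde{w}_{N_{k+1}}(f)-\widetilde{w}_{N_k}(f)\bigr\|_2^2
=\int_{\mathbb{T}}\Bigl|\frac{1}{N_{k+1}}\sum_{j\leq N_{k+1}}\psi_j z^{q_j}-\frac{1}{N_k}\sum_{j\leq N_k}\psi_j z^{q_j}\Bigr|^2\, d\omega_f(z),
$$
where $\omega_f$ is a positive measure of total mass $\|f\|_2^2$. The quenched bound, uniformly over all $f$ simultaneously, therefore follows from a \emph{single} almost-sure event, namely that $\sum_{k\geq 1}|\cdots|^2$ is bounded uniformly in $z\in\mathbb{T}$; this decouples $f$ from $\omega$ entirely and is what makes ``for almost all $\omega$, for all $f$'' achievable. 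The remaining work is a chaining/Borel--Cantelli estimate on the random exponential polynomials $\frac{1}{N}\sum_{j\leq N}\psi_j z^{q_j}$ uniformly in $z$, and it is precisely there that the entropy factor $\Phi$, the growth regularity \eqref{leqno4.8} and the summability \eqref{leqno4.9} enter --- roughly in the roles you guessed, but on the spectral side rather than on the dynamical side. This is the route taken in \cite{nairweber1} and in the spectral regularization papers \cite{lifshitsweber}, \cite{lifshitsweber2}; without the transference step your outline cannot be completed.
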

Some remarks about the nature of condition 
\eqref{leqno4.10} 
are in order.  If $N_k  \ =
\ [k^{\epsilon}]$ $(k \ = \ 1,2, \ldots )$, 
with $\epsilon \ > \ 1$ then
\eqref{leqno4.10} 
reduces to
$$
\sum _{k\geq 1}
{\log (q_{[k^{\epsilon}]}+ \log k) \over k^{\epsilon}}
\ < \ \infty,
$$
which is realised if $q_k \ = \ O(2^{k^{\delta}})$ 
for $\delta \ > \ 0$.  
Also if $N_k \ = \ 2^k$
$(k \ = \ 1,2,\ldots )$ then 
\eqref{leqno4.10}
reduces to
$$
\sum _{k\geq 1}{\log (q_{2^k}+ \log k) \over 2^k}
\ < \ \infty,
$$
which is realized if $q_k \ = \ O(2^{k^{\gamma}})$. 
Given the earlier Lemma of this section these two 
conditions are 
not difficult to {\color{black}satisfy.


Set} $k_n \ = \ 
[g(n)]$ $(n \ = \ 1,2,\ldots )$ where $g$ is a 
differentiable
function from $[0,\infty )$ to itself whose 
derivative increases with
its argument.  Let $Z_M$ denote the 
cardinality of 
the set
$\{ n \ : \ k_n \ \leq \ M \}$ and suppose 
for some function
$a \ : \ [1,\infty ) \ \to \ [1,\infty )$ 
increasing to infinity
as its argument does, that we set
$$
b(M) \ 
:= \ \sup _{{\color{black}\{\alpha \}} \in [{1\over a(M)},
{1\over 2})}
{\color{black}\left |\sum _{n:k_n \leq M} e^{2\pi ik_n \alpha } \right |.}
$$
Suppose also for some decreasing function 
$c \ : \ (1,\infty ) \ \to
\ (0,\infty )$ that
$$
{b(M) \ + \ Z_{[a(M)]} \ + \ 
{M\over a(M)} \over Z_M}
\ \leq \ C \,c(M).
$$
Then if we have 
$$
\sum _{s=1}^{\infty} c(\rho ^s) \ < \infty ,
$$
for every $\rho \ > \ 1$ we say $(k_n)_{n\geq 1}$ 
satisfies
{\em condition H}
(see examples in \textcolor{black}{S}ection \ref{S7}). 
Let $b_k \ = 
\ g^{-1}(k) \ - \ g^{-1}(k-1)$ where
$g^{-1}$ here denotes the inverse function 
of $g$ on its set of
definition.  Also suppose that 
there is a constant $C$ such that
\begin{equation}
\label{leqno4.11}
\bigl(g^{-1}({\color{black} \left [{1\over \{\alpha \}} \right ]}
)\bigr)^2
\Bigl(\sum _{{\color{black}\left  [{1\over \{\alpha \}} \right ]}N_k\geq 1}
\Bigl({1\over g^{-1}(N_k)}
\Bigr)^2
\Bigr)
\ \leq \ C. 
\end{equation}
{

\begin{lemma}
\label{lemma4.11}
Suppose $(k_n)_{n\geq 1}$ 
satisfies condition H,
and \eqref{leqno4.11} 
and that $1 \ < \ a \ 
\leq {N_{k+1}\over N_k} \ < \ b$.  
Then
\eqref{leqno4.8} 
holds for the corresponding square function.
\end{lemma}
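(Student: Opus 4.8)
The conclusion to aim at is the $L^2$ square-function estimate \eqref{leqno4.6} for $Sf=\bigl(\sum_{k\ge1}|C_{N_{k+1}}f-C_{N_k}f|^2\bigr)^{1/2}$, since that is what ``holds for the corresponding square function.'' The plan is to pass to the spectral side, exactly as in Section \ref{S3}. Writing $U$ for the (isometric) Koopman operator $Uf=f\circ T$, the Bochner--Herglotz theorem \cite{katznelson} gives a finite spectral measure $\omega_f$ on $\mathbb T$ with $\omega_f(\mathbb T)=\|f\|_2^2$ and $\langle U^{\,n}f,f\rangle=\int_{\mathbb T}z^{n}\,d\omega_f(z)$. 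Since $C_Nf=\frac1N\sum_{n=0}^{N-1}U^{k_n}f$, expanding the inner products and using $F(N,z)$ from \eqref{FNz_definition} yields
$$
\|Sf\|_2^2=\int_{\mathbb T}\Bigl(\sum_{k\ge1}\bigl|F(N_{k+1},z)-F(N_k,z)\bigr|^2\Bigr)\,d\omega_f(z).
$$
Hence it is enough to prove the single uniform bound
$$
\Lambda:=\sup_{z\in\mathbb T}\ \sum_{k\ge1}\bigl|F(N_{k+1},z)-F(N_k,z)\bigr|^2<\infty,
$$
after which \eqref{leqno4.6} follows with $C=\Lambda^{1/2}$.

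Writing $z=e^{2\pi i\theta}$, the frequency $\theta=0$ is trivial because $F(N,1)=1$, so fix $\theta\in(0,\tfrac12]$. The two-sided gap $1<a\le N_{k+1}/N_k<b$ gives $a^{k}\lesssim N_k\lesssim b^{k}$, so that $k$ behaves as a dyadic scale: each block $[2^{j},2^{j+1})$ contains only $O(1)$ of the $N_k$, which is exactly what lets a sum indexed by $k$ be compared with one of the form $\sum_s c(\rho^s)$. For each $k$ set $M_k:=k_{N_k-1}$, the largest sampled value contributing to $F(N_k,z)$; then $Z_{M_k}=N_k$ and $N_kF(N_k,z)=\sum_{n:\,k_n\le M_k}e^{2\pi ik_n\theta}$, while the convexity of $g$ (its increasing derivative) together with $N_k\gtrsim a^k$ forces $M_k\ge\rho^{k}$ for some $\rho>1$. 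I would then split the $k$-sum at the resolution scale of $\theta$, into the \emph{resolved} range $\{k:\{\theta\}\ge 1/a(M_k)\}$ and the \emph{unresolved} range $\{k:\{\theta\}<1/a(M_k)\}$.

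On the resolved range the defining inequality of condition~H applies directly: $|N_kF(N_k,z)|\le b(M_k)$, so $|F(N_k,z)|\le b(M_k)/Z_{M_k}\le C\,c(M_k)$, and likewise at $N_{k+1}$. Assuming without loss that $c\le1$, we get $|F(N_{k+1},z)-F(N_k,z)|^2\le C\,c(M_k)$, and since $M_k\ge\rho^k$ and $c$ is decreasing the resolved contribution is at most $C\sum_k c(\rho^{k})<\infty$, uniformly in $\theta$; this is precisely the summability built into condition~H. On the unresolved range the phases no longer oscillate fast, so van der Corput's first-derivative test for the convex phase is useless; here one instead controls the transition of $F(N_k,z)$ away from the value $1$ by the crude bound $|F(N_k,z)-1|\le \frac1{N_k}\sum_{n<N_k}|e^{2\pi ik_n\theta}-1|\ll g(N_k)\{\theta\}$. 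Squaring the resulting estimate $|F(N_{k+1},z)-F(N_k,z)|\ll g(N_{k+1})\{\theta\}$ and summing over the unresolved $k$ (those with $g(N_k)\lesssim 1/\{\theta\}$) produces a sum majorised by the left-hand side of \eqref{leqno4.11}, which condition \eqref{leqno4.11} bounds by $C$, again uniformly in $\theta$. Adding the two ranges gives $\Lambda<\infty$.

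The step I expect to be the real obstacle is the unresolved low-frequency estimate and its matching with the resolved one at the transition scale $\{\theta\}\sim 1/a(M_k)$: one must quantify how fast $F(N_k,z)$ leaves the value $1$ as $k$ increases through this scale, convert that into the square-summable form encoded by \eqref{leqno4.11}, and verify that all implied constants are independent of $\theta$. Once each of the two ranges is shown to contribute $O(1)$ uniformly in $\theta$, the spectral reduction of the first paragraph delivers \eqref{leqno4.6}, and the lemma follows.
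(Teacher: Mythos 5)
You should first note that the paper itself contains no proof of this lemma: it is quoted verbatim from \cite{nairweber1}, so your attempt can only be measured against the method of that source. Your overall strategy is the right one and is the standard one for such results: the Bochner--Herglotz transference in your first paragraph is correct and reduces the lemma (whose conclusion must be read as the square-function bound \eqref{leqno4.5}/\eqref{leqno4.6} --- the reference to \eqref{leqno4.8} in the statement is evidently a mis-citation, as you correctly inferred) to the single uniform estimate $\sup_{z\in\mathbb T}\sum_{k\ge1}|F(N_{k+1},z)-F(N_k,z)|^2<\infty$ with $F$ as in \eqref{FNz_definition}. Your resolved-range argument is also essentially complete: condition H gives $|F(N_k,z)|\le b(M_k)/Z_{M_k}\le C\,c(M_k)$ there, the convexity of $g$ together with $N_k\ge a^kN_1$ gives $M_k\gtrsim \rho^k$, and $\sum_k c(\rho^k)<\infty$ closes that case.

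The genuine gap is in the low-frequency analysis, and it is exactly the point you flag yourself. For fixed $\theta$ your two effective estimates cover two \emph{disjoint but non-exhaustive} regimes: the exponential-sum bound $b(M_k)$ applies only when $\{\theta\}\ge 1/a(M_k)$, while the crude bound $|F(N_k,z)-1|\le \frac{2\pi\{\theta\}}{N_k}\sum_{n<N_k}k_n\ll M_k\{\theta\}$ is only useful when $M_k\{\theta\}\lesssim 1$, i.e.\ $1/\{\theta\}\gtrsim M_k$ (and in that very-low regime the sum $\sum\{\theta\}^2g(N_{k+1})^2$ is $O(1)$ simply because $g(N_{k+1})\ge a\,g(N_k)$ grows geometrically; no extra hypothesis is needed there). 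Since $a(M)$ in general grows far more slowly than $M$, there remains a nonempty middle range $a(M_k)\le 1/\{\theta\}\lesssim M_k$ in which the bulk of the phases $k_n\theta$ are already large yet $b(M_k)$ carries no information, and your argument says nothing there. That middle range is precisely where the remaining two terms of condition H, namely $Z_{[a(M)]}/Z_M\le C\,c(M)$ and $(M/a(M))/Z_M\le C\,c(M)$, and the hypothesis \eqref{leqno4.11} must enter, by splitting $\sum_{n:k_n\le M_k}e^{2\pi i k_n\theta}$ at the threshold $k_n\approx 1/\{\theta\}$ and counting the low block by $Z_{[1/\{\theta\}]}\approx g^{-1}([1/\{\theta\}])$ --- note that \eqref{leqno4.11} is expressed in terms of $g^{-1}([1/\{\alpha\}])$ and $g^{-1}(N_k)$, which does not match the quantity $\sum g(N_{k+1})^2\{\theta\}^2$ you propose to majorise by it. Until this middle range is controlled, the uniform bound $\Lambda<\infty$, and hence the lemma, is not established.
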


In \cite{nair} 
it is shown that examples of sequences of 
integers which
satisfy conditions H include those given by 
$g(n) \ = \ n^{\omega}$, for
non-integer $\omega \ > 1$, $g(n) \ = 
\ e^{(\log  n)^{\gamma}}$ for
$\gamma \ \in (1, {3\over 2})$ and $g(n) \ 
= \ \alpha _k n^k \ + \ \cdots
\ + \ \alpha _1 n \ + \ \alpha _0$, where the real
numbers $\alpha _ 1, \cdots , \alpha _k$ are not 
all multiples of the 
same real number.  
The reader will readily verify 
that these examples
also satisfy conditions H and 
\eqref{leqno4.10}. 
An important point of note
is that, as shown in \cite{nair}, 
the ergodic averages, 
for $a_n \ = \
k_n$ and $f$ in $L^p$ with $p \ > \ 1$ converge 
to a $T$ invariant limit.

We want to show that if $(k_n)_{n\geq 1}$
satisfies conditions H, 
\eqref{leqno4.10} 
then \eqref{leqno4.5} 
is satisfied with
$1 \ < \ \ a \ \leq {N_{k+1}\over N_k} \ \leq \ b$.

By identifying it with its characteristic 
function  we may view a strictly
increasing sequence of natural numbers as a point 
in the power set of the
natural numbers \textcolor{black}{(i.e. $2^{\mathbb{N}}$)}, 
or as a point 
in the Cartesian product 
$\Pi _{n=1}^{\infty}X_n$ where for each natural 
number $n$, we have
set $X_n \ = \ \{ 0,1 \}$.  As 
a consequence we may put a probability measure on 
the space of strictly
increasing sequences of integers, as a product 
measure $\pi$ by setting
$\pi _n (\{1 \}) \ = \ \sigma _n$ for 
$\sigma _n \ \in \ [0,1]$ and
$\pi _n (\{ 0 \}) \ = \ 1 \ - \ \sigma _n$ 
and defining $\pi$ to be the
Cartesian product measure 
$\Pi _{n=1}^{\infty}\pi _n$.  For a strictly
increasing sequence of integers $(N_k)_{k\geq 1}$ 
suppose also that
\begin{equation}
\label{leqno4.12}
\sum _{k\geq 1} 
\Bigl({\log _2 \ N_k \over 
\sum _{n\leq N_k} \sigma _n}
\Bigr)
 \ 
 < \ \infty , 
\end{equation}
for arbitrary real $\alpha $ that we have some 
constant $C$ dependent
only on $\pi$ such that
\begin{equation}
\label{leqno4.13}
{1\over \left(\sum _{n\leq {\color{black} \left [{1\over \{\alpha \} } \right ]}}
\sigma _n
\right)^2}
\,
\displaystyle
\sum _{N_k{\color{black} \left [{1\over \{\alpha \}} \right ]}\geq 1}
\Bigl({1\over 
\sum _{n\leq N_k}\sigma _n}
\Bigr)^2 \ 
< \ C. 
\end{equation}
Then with regard to the probability measure 
just defined, we have 
the following lemmas. 

\begin{lemma}
\label{lemma4.12}
For ergodic averages with regard to almost all
strictly increasing sequences $(a_k)_{k\geq 1}$ 
with respect to $\pi$,
if $(N_k)_{k\geq 1}$ and $(\sigma _k )_{k\geq 1}$ 
satisfy 
\eqref{leqno4.12} and \eqref{leqno4.13}
above then \eqref{leqno4.8} 
holds.
\end{lemma}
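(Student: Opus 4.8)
The plan is to establish, for $\pi$-almost every strictly increasing sequence $(a_k)_{k\geq 1}$, the square-function bound sought in the paragraph preceding Lemma~\ref{lemma4.11}, namely \eqref{leqno4.5} for the operator $S$ built from the averages $C_N$ of \eqref{CNfak_definition}. The decisive first step is to pass to the frequency side by the spectral theorem, exactly as in the proof of Theorem~\ref{theorem1}: associating to $f$ its spectral measure $\nu_f$ on $\mathbb{T}$ with respect to the isometry induced by $T$, and writing $e(\theta):=e^{2\pi i\theta}$ and $m_N(\xi)=\frac1N\sum_{j=1}^{N}e(a_j\xi)$, one has
\begin{equation}
\bigl\|Sf\bigr\|_2^2 = \int_{\mathbb{T}} \Bigl(\sum_{k\geq 1}\bigl|m_{N_{k+1}}(\xi)-m_{N_k}(\xi)\bigr|^2\Bigr)\, d\nu_f(\xi).
\end{equation}
Since $\nu_f(\mathbb{T})=\|f\|_2^2$ while $\nu_f$ depends on $f$ and on the (arbitrary) system, it is not enough to bound this integral on average in $\pi$; I must produce a single $\pi$-null set off which the multiplier is uniformly bounded, i.e. show that for $\pi$-a.e. realization there is a constant $C$ with
\begin{equation}
W(\xi):=\sum_{k\geq 1}\bigl|m_{N_{k+1}}(\xi)-m_{N_k}(\xi)\bigr|^2 \ \leq\ C \qquad (\xi\in\mathbb{T}).
\end{equation}
This uniform-in-$\xi$ demand is what makes the statement nontrivial.

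Next I would reduce $m_N$ to genuinely independent random Weyl sums. Writing $\omega_n$ for the Bernoulli coordinates, set $D_M(\xi)=\sum_{n\leq M}\omega_n e(n\xi)$, $Z_M=D_M(0)$, and $s_M=\sum_{n\leq M}\sigma_n$; then $Z_M^{-1}D_M(\xi)$ is the average of $e(a_j\xi)$ over the terms $a_j\leq M$, so $m_N$ is recovered by matching the index $N$ to the count $Z_M\approx s_M$. The only probabilistic inputs are the two moments $\mathbb{E}[D_M(\xi)]=\sum_{n\leq M}\sigma_n e(n\xi)$ and $\mathrm{Var}(D_M(\xi))=\sum_{n\leq M}\sigma_n(1-\sigma_n)\leq s_M$, so after normalisation the fluctuation at a fixed frequency is of order $s_M^{-1/2}$, together with the Bernstein inequality for the trigonometric polynomial $D_M$, which has degree $M$.

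I would then split $\mathbb{T}$ into a low-frequency (major arc) part, where $\{\alpha\}$ is small and $[1/\{\alpha\}]$ is large, and the complementary minor arcs. On the major arc the randomness does not help: $\mathbb{E}[m_N]$ is close to $1$ and varies slowly, so the increments $m_{N_{k+1}}-m_{N_k}$ are governed by the slow variation of the mean plus the variance term. On the minor arcs one exploits the square-root cancellation of $D_M(\xi)$: estimating the second moment of each increment and summing over the lacunary scales $N_k$, the resulting series is controlled by the quantity in \eqref{leqno4.13}, namely $s_{[1/\{\alpha\}]}^{-2}\sum_{N_k}\bigl(s_{N_k}^{-1}\bigr)^2$, whose finiteness is exactly the hypothesis. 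Combining the two regimes and invoking Borel--Cantelli over the dyadic scales then delivers the full-measure set of sequences.

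The hard part is the passage from frequencywise control to the uniform bound $\sup_{\xi}W(\xi)\leq C$. The moment and concentration estimates above are sharp at any single $\xi$, or on a fixed finite net, but not simultaneously over all frequencies. To close this gap I would fix, at each scale $k$, a net of spacing comparable to $N_{k+1}^{-1}$ and use the Bernstein inequality to bound the oscillation of $m_{N_{k+1}}$ between adjacent net points by its derivative, so that the supremum over $\mathbb{T}$ is comparable to the maximum over the $O(N_{k+1})$ net points. A union bound over the net costs a subgaussian factor of size $\sqrt{\log N_{k+1}}$ in the deviation threshold, and the Borel--Cantelli series over $k$ must still converge; this is precisely where the $\log_2 N_k$ numerator in \eqref{leqno4.12} enters, being the entropy of the net, and where the summability of $\log_2 N_k/\sum_{n\leq N_k}\sigma_n$ is exactly what absorbs the growth in net cardinality against the decay $s_{N_k}^{-1}$ of the per-frequency fluctuation. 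Managing this competition uniformly, so that one null set serves the supremum over all $\xi$, is the crux of the argument.
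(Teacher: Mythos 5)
First, a point of reference: the paper itself contains no proof of Lemma~\ref{lemma4.12}. The entire block of Lemmas~\ref{lemma4.6}--\ref{lemma4.13} is imported from \cite{nair2} and \cite{nairweber1}, so there is no in-paper argument to measure yours against; one can only judge the proposal on its own terms. (You correctly read the conclusion ``\eqref{leqno4.8}'' as a misprint for the square-function bound \eqref{leqno4.5}/\eqref{leqno4.6}.) Your opening moves are sound and standard: the spectral (Bochner--Herglotz) transference reducing $\|Sf\|_2\leq C\|f\|_2$ for all systems to a bound on the multiplier $W(\xi)=\sum_k|m_{N_{k+1}}(\xi)-m_{N_k}(\xi)|^2$ that is uniform in $\xi$ and holds off a single $\pi$-null set, and the identification of $m_N$ with the normalised random exponential sum $D_M(\xi)/Z_M$, are exactly the right framework, of the Bourgain type underlying \cite{nairweber1}.

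The gap is that the decisive estimate is announced rather than proved --- and you say so yourself (``the crux of the argument''). Two specific problems. First, the division of labour between the hypotheses is partly misassigned: you attach \eqref{leqno4.13} to the minor-arc second moments, but the random fluctuation $D_M(\xi)-\mathbb{E}D_M(\xi)$ has variance $\leq s_M$ \emph{uniformly in} $\xi$ and knows nothing about $[1/\{\alpha\}]$; the $\alpha$-dependence of \eqref{leqno4.13} through $\sum_{n\leq[1/\{\alpha\}]}\sigma_n$ signals that it must control the \emph{deterministic} part $s_M^{-1}\sum_{n\leq M}\sigma_n e(n\alpha)$ of the multiplier near integer frequencies, via an Abel-summation bound of the shape $|\sum_{n\leq M}\sigma_n e(n\alpha)|\ll\sum_{n\leq[1/\{\alpha\}]}\sigma_n$, while the random part is paid for entirely by \eqref{leqno4.12}. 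Neither of these two estimates is actually carried out: the mean-part increment bound and its summation over the scales with $N_k$ beyond $[1/\{\alpha\}]$ is missing, and the chaining step (Bernstein on a net of spacing $\sim N_{k+1}^{-1}$, union bound over $O(N_{k+1})$ points, Borel--Cantelli in $k$, and the verification that the surviving constant $\sup_\xi W(\xi)$ is a.s.\ finite rather than merely finite scale-by-scale) is described but not executed. Since the whole content of the lemma is precisely that this bookkeeping closes under \eqref{leqno4.12}--\eqref{leqno4.13}, what you have is a correct plan of attack, not a proof.
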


\begin{lemma}
\label{lemma4.13}
Fix a natural number 
$d \ >  \ 2$ and let $(k_n)_
{n\geq 1}$ denote the set 
\begin{equation}
\label{equation4.13}
\Lambda  \ = \ \{ n \ \in \ {\mathbb{ N}}: n \ 
= \ \sum _{j\geq 1}
q_j d^j\ with \ q_j \ \in \{ 0,1 \} \},
\end{equation}
ordered by size.  
Then if 
$N_k \ = \ d^k \ (k\ = \ 1,2,\cdots )$ 
then \eqref{leqno4.5} 
holds.
\end{lemma}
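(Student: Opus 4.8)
The plan is to reduce the operator inequality \eqref{leqno4.5} to a single scalar multiplier estimate on the circle via the spectral theorem, and then to exploit the self-similar (Riesz--product) structure of $\Lambda$. Write $U$ for the Koopman operator $Uf=f\circ T$ on $L^2(X,\beta,\mu)$ and, for the increasing enumeration $\Lambda=\{a_0<a_1<\cdots\}$, set
$$
m_N(z)={1\over N}\sum_{n=0}^{N-1}z^{a_n},
\qquad z\in\mathbb{T}.
$$
Then the averages $C_N$ of \eqref{CNfak_definition} are given by $C_Nf=m_N(U)f$ through the functional calculus, so $C_{N_{k+1}}f-C_{N_k}f=\bigl(m_{N_{k+1}}(U)-m_{N_k}(U)\bigr)f$. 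Letting $\omega_f$ be the spectral measure of $f$ furnished by the Bochner--Herglotz theorem (recalled in Section \ref{S3}), Tonelli's theorem gives
$$
\bigl\|Sf\bigr\|_2^2
=\sum_{k\ge 1}\bigl\|(m_{N_{k+1}}(U)-m_{N_k}(U))f\bigr\|_2^2
=\int_{\mathbb{T}}\Phi(z)\,d\omega_f(z),
\qquad
\Phi(z):=\sum_{k\ge 1}\bigl|m_{N_{k+1}}(z)-m_{N_k}(z)\bigr|^2.
$$
Since $\omega_f$ is a positive measure of total mass $\|f\|_2^2$, it suffices to prove the scalar bound $\|\Phi\|_{L^\infty(\mathbb{T})}\le C$ with $C$ independent of $(X,\beta,\mu,T)$ and $f$; this yields \eqref{leqno4.5}.

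For the multiplier estimate I would first record the arithmetic of $\Lambda$. Because $d>2$, the map sending the binary digits of $n=\sum_i b_i2^i$ to base-$d$ digits shifted up by one is order preserving, so $a_n=\sum_{i\ge 0}b_i\,d^{\,i+1}$; in particular the first $2^M$ elements are exactly those using only $d^1,\dots,d^M$, and their generating sum factors as a lacunary Riesz product
$$
\sum_{n=0}^{2^M-1}z^{a_n}=\prod_{j=1}^{M}\bigl(1+z^{d^j}\bigr),
\qquad
\bigl|1+z^{d^j}\bigr|=2\,|\cos(\pi d^j\theta)|,\quad z=e^{2\pi i\theta}.
$$
For a general truncation $N$---and in particular for $N=N_k=d^k$, which is not a power of $2$---I expand $N$ in binary and split $\{0,\dots,N-1\}$ into its $O(\log N)$ greedy dyadic blocks; by the digital correspondence each block contributes a shifted, truncated Riesz product $z^{c}\prod_{j\in S}(1+z^{d^j})$. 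Thus $A_N(z):=\sum_{n<N}z^{a_n}$ is a sum of $O(\log N)=O(k)$ such lacunary products, and the recursion $\prod_{j\le M}(1+z^{d^j})=(1+z^{d^M})\prod_{j\le M-1}(1+z^{d^j})$ lets one compare $m_{d^{k+1}}$ with $m_{d^k}$ scale by scale.

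To bound $\Phi$ I would split $\mathbb{T}$ at $z=1$. Near $z=1$ I linearize each difference as $2\pi i\theta\,(\overline a_{N_{k+1}}-\overline a_{N_k})$ up to a controlled second-order remainder, where $\overline a_N$ is the mean of the first $N$ elements of $\Lambda$; since $N_k=d^k$ is lacunary the means $\overline a_{d^k}$ grow geometrically, so only the $O(1)$ scales with $\theta\,\overline a_{d^k}\lesssim 1$ contribute at first order, and their squared differences sum to $O(1)$ by geometric summation. Away from $z=1$ the factors $|\cos(\pi d^j\theta)|$ together with the lacunarity $d\ge 3$ of the exponents force $|m_{d^k}(z)|$ to be small and square-summable across the lacunary scales, so again $\sum_k|m_{N_{k+1}}(z)-m_{N_k}(z)|^2=O(1)$. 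The \emph{main obstacle} is precisely this uniform control: one must show the lacunary products do not accumulate mass at any exceptional $\theta$ (e.g.\ rationals with denominator adapted to $d$), matching the decay of the Riesz products to the geometric spacing of the blocking $N_k=d^k$ so that the diagonal scale alone carries the estimate while the tails stay summable uniformly in $z$. This step, rather than the spectral reduction, is where the hypotheses $d>2$ and $N_k=d^k$ are genuinely used.
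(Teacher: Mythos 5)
First, a point of comparison: the paper does not actually prove Lemma~\ref{lemma4.13}; it is quoted from \cite{nairweber1}, so there is no internal proof to measure your argument against. On its own terms, your opening reduction is the standard and correct first step: the Bochner--Herglotz representation already used in Section~\ref{S3} gives $\|C_{N_{k+1}}f-C_{N_k}f\|_2^2=\int_{\mathbb{T}}|m_{N_{k+1}}(z)-m_{N_k}(z)|^2\,d\omega_f(z)$ (valid even when $T$ is a non-invertible isometry), and Tonelli reduces \eqref{leqno4.5} to the uniform multiplier bound $\sup_{z}\Phi(z)<\infty$. Your combinatorial observations are also correct: the enumeration satisfies $a_n=\sum_i b_i d^{\,i+1}$ for the binary digits $b_i$ of $n$, and $\sum_{n<2^M}z^{a_n}=\prod_{j=1}^{M}\bigl(1+z^{d^j}\bigr)$.

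The gap is that the multiplier estimate itself --- which is the entire content of the lemma --- is not proved, and the dichotomy you sketch for it is false. You claim that away from $z=1$ the cosine factors force $|m_{d^k}(z)|$ to be small and square-summable. This fails at every point $z=e^{2\pi i m/d^{L}}$: since each element of $\Lambda$ is divisible by $d$, one has $m_N(e^{2\pi i/d})=1$ for \emph{every} $N$; more generally, for $z=e^{2\pi i m/d^{L}}$ the value $z^{a_n}$ depends only on the digits $q_1,\dots,q_{L-1}$ of $a_n$, so $m_{2^M}(z)\to\prod_{j=1}^{L-1}\tfrac{1}{2}\bigl(1+z^{d^j}\bigr)$, which is in general nonzero. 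At these (dense) resonant points the square-summability of the differences must come from convergence of all the $m_{N_k}(z)$ to a common nonzero limit at a geometric rate, not from decay, and in a neighbourhood of such points one needs a genuine major-arc/minor-arc analysis that your sketch does not supply. It is worth noting that at the full-product truncations $N=2^M$ the estimate is an exact telescoping identity: with $c_j=\cos^2(\pi d^j\theta)$ one gets $\sum_{M}|m_{2^{M+1}}(z)-m_{2^{M}}(z)|^2=\sum_{M}(1-c_{M+1})\prod_{j\le M}c_j\le 1$. The lemma's choice $N_k=d^k$ destroys this structure, and the correction terms $m_{d^k}(z)-m_{2^{s_k}}(z)$ with $2^{s_k}\le d^k<2^{s_k+1}$ are individually only $O(1)$ (they equal $\tfrac{r_k}{d^k}\bigl(z^{d^{s_k+1}}m_{r_k}(z)-m_{2^{s_k}}(z)\bigr)$ with $r_k=d^k-2^{s_k}$ comparable to $d^k$), so their square-summability over $k$ requires a further recursive argument that is entirely absent. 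As it stands the proposal sets up the right framework but does not prove the lemma.
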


For $s=\sigma + i t$ set
$$
C_N(s , \zeta, x ) 
:= {1\over N} \sum _{n=1}^N
\zeta (\sigma +iT^{k_n}(x) )
\qquad \qquad (N=1,2, \ldots ) 
$$
and set
$$
S(s, \zeta, x) := 
\ \Bigl( \sum _{k\geq 1} 
\bigl|C_{N_{k+1}}(s , \zeta, x )-
C_{N_k}(s, \zeta, x )
\bigr|^2
\Bigr)^{1\over 2}.
$$

We have the following theorem.


\begin{theorem}
\label{theorem4.14}
If $\sigma \in ({1\over 2}, 1)$ 
we have $C>0$ 
such that 
\begin{equation}
\label{equation4.14}
\bigl\|S(s, \zeta, .)
\bigr\|_2^2 
~\leq~ 
{C\over \pi \sigma } 
\left|{\zeta(2\sigma ) \over 2 \sigma } + 
{\zeta(2\sigma -1 ) \over 2\sigma - 1} 
\right|,
\end{equation}
for all the families and sequences 
$(k_n)_{n\geq 1}$ 
and ${\cal N}$ listed in 
Lemmas \ref{lemma4.6} --
\ref{lemma4.13}. 
\end{theorem}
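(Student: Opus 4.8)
The plan is to read each of Lemmas \ref{lemma4.6}--\ref{lemma4.13} as the statement that the associated square-function operator is bounded on $L^2$ of an \emph{arbitrary} measure-preserving system --- that is, as an inequality of the shape \eqref{leqno4.5}, $\|Sf\|_2 \leq C\|f\|_2$, where for the randomized and weighted variants (Lemmas \ref{lemma4.8} and \ref{lemma4.9}) the square function is the perturbed or weighted one and the bound holds for $P$-almost every realization $\omega$. Since the only object specific to $\zeta$ in Theorem \ref{theorem4.14} is the choice of test function, the whole proof consists of specializing the abstract system to $(\mathbb{R}, \mathcal{B}, \mu_{1,0}, T_{1,0})$, taking $f(x) = \zeta(\sigma + ix)$, and inserting the value of $\|f\|_2$ into the universal bound.

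First I would check that $f(x) = \zeta(\sigma + ix)$ belongs to $L^2(\mathbb{R}, \mathcal{B}, \mu_{1,0})$ whenever $\sigma \in ({1\over 2}, 1)$. This is precisely the integrability established in the proof of Theorem \ref{theorem4}: since $\mu_{1,0}(A) = {1\over \pi}\int_A {dt\over 1+t^2}$, one has
$$
\|f\|_2^2 = \int_{\mathbb{R}}|\zeta(\sigma + ix)|^2\, d\mu_{1,0} = {1\over \pi}\int_{\mathbb{R}}{|\zeta(\sigma + i\tau)|^2\over 1+\tau^2}\, d\tau,
$$
and the right-hand side was evaluated in closed form just before \eqref{CsurPi}, the bracketed quantity appearing there being negative. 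In particular $f \in L^2(\mu_{1,0})$, so each cited lemma may be applied with this $f$.

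With $f$ so chosen, the abstract average ${1\over N}\sum_{n} f(T_{1,0}^{k_n}x)$ of \eqref{CNfak_definition} coincides with $C_N(s, \zeta, x)$, because $f(T_{1,0}^{k_n}x) = \zeta(\sigma + iT^{k_n}x)$, and hence the square function of the lemma coincides with $S(s, \zeta, x)$ (interpreted, for the probabilistic lemmas, as the perturbed or weighted square function attached to a single fixed full-measure realization). Squaring the $L^2$ bound and invoking the evaluation \eqref{CsurPi} then gives
$$
\|S(s, \zeta, \cdot)\|_2^2 \leq C\,\|f\|_2^2 = {C\over \pi}\int_{\mathbb{R}}{|\zeta(\sigma + i\tau)|^2\over 1+\tau^2}\, d\tau \leq {C\over \pi\sigma}\Bigl|{\zeta(2\sigma)\over 2} + {\zeta(2\sigma - 1)\over 2\sigma - 1}\Bigr|,
$$
which is the asserted inequality \eqref{equation4.14}, the universal constant $C$ absorbing the constants produced by the individual lemmas.

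I expect the only real care to be bookkeeping rather than mathematics, since the analytic heart is already contained in Lemmas \ref{lemma4.6}--\ref{lemma4.13}. One must confirm that the hypotheses attached to each listed family (the comparability $1 < a \leq N_{k+1}/N_k \leq b$, condition H, polynomial or prime-indexed sampling, and the growth and variance conditions \eqref{leqno4.7}--\eqref{leqno4.13}) are statements about the sequences $(k_n)$ and $\mathcal{N}$ alone, so that they transfer unchanged to the Boole system; and that the distributivity/commutation requirement underlying these bounds holds for $T_{1,0}$, which it does because $(T_{1,0}^n)_{n\geq 1}$ is a semigroup of ergodic $\mu_{1,0}$-preserving maps, exactly as used through Sawyer's theorem in Section \ref{S3}. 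The one genuinely delicate point is the passage from the almost-sure-in-$\omega$ bounds of Lemmas \ref{lemma4.8} and \ref{lemma4.9} to the deterministic statement of the theorem; this is handled by fixing a single realization in the full-measure set $\Omega_0$, which is legitimate because the constant $C$ there does not depend on $\omega$.
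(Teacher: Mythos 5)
Your proposal is correct and follows exactly the route the paper (implicitly) takes: the paper states Theorem \ref{theorem4.14} without a separate proof precisely because it is the specialization of the abstract square-function bounds \eqref{leqno4.5} of Lemmas \ref{lemma4.6}--\ref{lemma4.13} to $f(x)=\zeta(\sigma+ix)$ on the Boole system, combined with the $L^2$-norm evaluation recorded in \eqref{CsurPi}. The only remark worth adding is that your computation yields ${\zeta(2\sigma)\over 2}$ in the bracket, consistent with \eqref{CsurPi} and Theorem \ref{theorem4}, whereas the display \eqref{equation4.14} shows ${\zeta(2\sigma)\over 2\sigma}$ --- an apparent typographical slip in the statement rather than a gap in your argument.
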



The situation is different if we replace
$L^2$-norms by $L^1$-norms. 


\begin{lemma}
\label{lemma4.15}
Suppose $(X,\beta , \mu, T )$ is an ergodic 
and measure preserving transformation with $\mu$ 
non-ergodic.  Suppose $(k_n)_{n\geq 1}$ is Hartman 
uniform distributed and $L^p$- good universal for 
fixed $p \ \geq \ 1$.
Then for any non-constant function $f$ on 
$(X,\beta ,\mu )$ we set
$$
C_{N}( f) = \frac{1}{N}
\sum_{n = 1}^{N} f(T^{k_n}(x))
\qquad \qquad (N=1,2, \ldots ) .
$$
Then
we have 
\begin{equation}
\label{equation4.15}
\sum _{N \geq 1} 
\bigl|C_{N+1}(f)-C_{N}( f)\bigr| \ 
= \ + \infty ,
\qquad \mu ~\mbox{almost everywhere}.
\end{equation}
\end{lemma}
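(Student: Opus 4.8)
The plan is to reduce everything to a single telescoping identity and then to show that the fresh sample entering each average refuses to equilibrate when $f$ is non-constant. Writing $a_n := f(T^{k_n}(x))$ and $S_N := \sum_{n=1}^N a_n$, so that $C_N(f) = S_N/N$, one computes directly that
\[
C_{N+1}(f) - C_N(f) = \frac{a_{N+1} - C_N(f)}{N+1} = \frac{f(T^{k_{N+1}}(x)) - C_N(f)}{N+1}.
\]
Thus it suffices to bound the numerator $f(T^{k_{N+1}}(x)) - C_N(f)$ from below along a set of indices $N$ of positive density.

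First I would pin down the limit of the averages. Since $(k_n)_{n\geq 1}$ is $L^p$-good universal the averages $C_N(f)$ converge $\mu$ almost everywhere, and since $(k_n)_{n\geq 1}$ is moreover Hartman uniformly distributed while $T$ is ergodic, the same spectral / Bochner--Herglotz computation used in the proof of Theorem \ref{theorem1} in Section \ref{S3} (built on \eqref{FNz_definition} and \eqref{FNz_limite}) shows the limit is the constant $\bar f := \int_X f \, d\mu$. Hence $C_N(f) \to \bar f$ almost everywhere.

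Next I would bring in non-constancy. As $f$ is non-constant and $\bar f = \int_X f\, d\mu$, there is an $\epsilon > 0$ with $\mu(B) > 0$ for $B := \{\, y \in X : |f(y) - \bar f| > \epsilon \,\}$. Applying $L^p$-good universality to the bounded function $\mathbf{1}_B \in L^p(X,\beta,\mu)$ gives, for almost every $x$,
\[
\lim_{N\to\infty} \frac{1}{N}\,\#\{\, n \le N : T^{k_n}(x) \in B \,\} = \mu(B) > 0,
\]
so the index set $S := \{\, N : T^{k_{N+1}}(x) \in B \,\}$ has positive density $\mu(B)$ (a one-index shift of a set of density $\mu(B)$). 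Fixing such an $x$ and choosing $N_0$ with $|C_N(f) - \bar f| < \epsilon/2$ for all $N \ge N_0$, the triangle inequality yields $|f(T^{k_{N+1}}(x)) - C_N(f)| \ge \epsilon/2$ for every $N \in S$ with $N \ge N_0$, and therefore $|C_{N+1}(f) - C_N(f)| \ge \epsilon/\bigl(2(N+1)\bigr)$ there.

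Summing then finishes the argument: a subset of $\mathbb{N}$ of positive density has divergent sum of reciprocals (by a routine partial summation), so
\[
\sum_{N \ge 1} |C_{N+1}(f) - C_N(f)| \ \ge\ \frac{\epsilon}{2} \sum_{\substack{N \in S \\ N \ge N_0}} \frac{1}{N+1} \ =\ +\infty
\]
for almost every $x$, which is \eqref{equation4.15}. I expect the only delicate point to be the identification of the almost-everywhere limit with the constant $\bar f$ --- this is precisely where both the Hartman uniform distribution hypothesis and the ergodicity of $T$ are indispensable, and rather than redo it I would cite the spectral argument of Section \ref{S3}; the telescoping identity and the harmonic-sum divergence are elementary by comparison.
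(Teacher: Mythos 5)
Your proof is correct, and in fact the paper offers nothing to compare it with: Lemma \ref{lemma4.15} is stated in Section \ref{S4} and immediately applied to $f(x)=\zeta(\sigma+ix)$ to deduce Theorem \ref{theorem1.5}, but no proof of the lemma itself appears in the text. Your argument --- the telescoping identity $C_{N+1}(f)-C_N(f)=\bigl(f(T^{k_{N+1}}x)-C_N(f)\bigr)/(N+1)$, identification of the a.e.\ limit of $C_N(f)$ with $\int_X f\,d\mu$ via the spectral argument of Section \ref{S3}, the positive-density set of times at which the fresh sample lands in $B=\{|f-\bar f|>\epsilon\}$, and the divergence of $\sum 1/N$ over a set of positive density --- is a complete and self-contained proof, and is almost certainly the intended one. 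Three small remarks. First, the hypothesis ``ergodic \ldots with $\mu$ non-ergodic'' is self-contradictory as printed and is surely a typo for ``non-atomic'' (compare Theorem \ref{theorem_1equal2}); your reading, which uses ergodicity, is the only one under which the lemma is true --- for a genuinely non-ergodic system a non-constant invariant $f$ gives $C_N(f)$ constant in $N$ and the sum is $0$. Second, when you apply good universality to $\mathbf{1}_B$, the existence of $\lim_N \frac1N\#\{n\le N: T^{k_n}x\in B\}$ comes from good universality, but its identification with $\mu(B)$ again needs the Hartman-plus-ergodicity step, exactly as for $f$ itself; you should say you are invoking the Section \ref{S3} argument for $\mathbf{1}_B$ too, not just for $f$. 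Third, ``non-constant function $f$'' must be read as ``$f\in L^p$ not a.e.\ constant'' for both the convergence machinery and the choice of $\epsilon$ with $\mu(B)>0$ to go through; with that understanding every step checks out.
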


Applying Lemma \ref{lemma4.15} 
to 
$f(x) = \zeta ( \sigma + i x)$ for 
$\sigma \in ({1\over 2} , 1)$ 
and $ x \in \mathbb{ R}$ we get 
Theorem \ref{theorem1.5}. 



\section{Comparing dynamical and  probabilistic models}
\label{S6}

Let $Y=\Pi _{n=1}^{\infty}\Omega$, 
that is the space of sequences 
$(X_1, X_2, \cdots )$ in $\Omega$.  
Let $p_1$ denote
the projection $p_1: Y\to \Omega$ defined 
by $p_1((X_1,X_2, \cdots )) = X_1$.  
Also let $S:Y \to Y$ denote the shift map
defined on $Y$ by $S((X_1,X2, \cdots , )) 
= (X_2, X_3, \cdots )$.  
It is routine to check  that $S$ preserves the infinite
product measure 
$\mu ^{\infty} = \Pi _{n=1}^{\infty}\mu$ on $Y$.  
That the shift map $T$ is also ergodic 
with respect to this infinite product
measure is consequence of the 
{\color{black}Kolmogorov} zero one law.   
Now define $f$ on $Y$ by
 $f(\omega _1 , \omega _2 , \cdots )  
 = X_1(\omega _1 )$.  
 This means that  $X_n(\omega ) = f(S^{n-1}y) $ where
$y = (\omega _1 , \omega _2, \cdots )$ and $S$ 
denotes the above shift map.  
Also a simple computation shows that
$\int _Yf(y)d\mu ^{\infty} = {\cal E}(X_1)$.
This means that the strong law of 
large numbers follows from an 
application of Birkhoff's pointwise ergodic theorem and
the weak law of large numbers from an application of 
Von Neumann's norm ergodic theorem respectively.

The upshot of this is that, under quite 
weak hypotheses, the comparison 
between the random model described 
in \cite{weber} 
and 
the dynamical model in \cite{srichan} 
is actually a comparison 
between two different 
dynamical systems.  
Now suppose that 
$(k_n)_{n\geq 1}$ is Hartman uniformly 
distributed and $L^p$ good universal 
for fixed $p \in [1, \infty)$ 
and $\mu$ is the Cauchy distribution 
$\mu _{\alpha , \beta }$ then 
\begin{equation}
\label{(6.1)}
\lim _{N \to \infty} {1\over N} \sum _{n=0}^{N-1} f(s+iX_{k_n}( \omega )) 
= 
{\alpha \over \pi } \int _{\mathbb{ R}}
{f(s+ i \tau ) \over \alpha ^2 +
(\tau - \beta )^2}  d\tau , 
\end{equation}
for almost all $\omega$ in $\mathbb{ R}$.
We can specialise this to
\begin{equation}
\label{(6.1)bis}
\lim _{N\to \infty} {1\over N} \sum _{n=1}^N \log  {\color{black} \left | \zeta \left  ({1\over 2} 
+ {1\over 2}i X_{k_n}(\omega ) \right ) \right | } 
= 
\sum _{\rho: \Re (\rho ) {\color{black}>} {1\over 2}}
\log \Bigl| {\rho \over 1- \rho } \Bigr| .
\end{equation}

Again, the Riemann {\color{black} Hypothesis} follows if 
either side is zero.  
As above similar observations can be 
obtained other \textcolor{black}{zeta} functions and $L${\color{black} -functions}.

The condition of good universality 
is an assumption about all 
dynamical systems.  
We don't need to assume so much 
and can deduce our conclusions 
from the properties of one transformation.  
The following Theorem offers 
that link between  the two models. {\color{black}See
also 
\cite{jonesolsen}
where similar ideas appear.}


\begin{theorem}
\label{theorem_1equal2}
Consider two ergodic dynamical systems 
$(X_1, \beta _1 , \mu _1, T_1)$ and 
$(X_2, \beta _2 , \mu _2, T_2)$ both on separable measure spaces.  
Suppose that $\mu_1$ and $\mu _2$ 
are non-atomic. 
Then if for a particular sequence of 
integers $(k_n)_{n\geq 1}$ for each 
$f_1 \in L^p (X_1, \beta , \mu _1)$  
for all $p>1$ we have
\begin{equation}
\label{1equal2}
\lim _{N\to \infty}{1\over N} 
\sum _{n=1}^Nf_1(T_1^{k_n}x_1) 
=
\int _{X_1} f_1(x_1) d\mu _1 ,
\end{equation}
$\mu_1$ almost everywhere, 
then the same is true with $1$ replaced by $2$.
\end{theorem}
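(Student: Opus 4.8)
The plan is to establish two ingredients for the system $(X_2,\beta_2,\mu_2,T_2)$ and to combine them through the Banach principle: first a maximal inequality for the averages $A_N^{(2)}f:=\frac1N\sum_{n=1}^N f(T_2^{k_n}\cdot)$, and second the a.e. convergence of $A_N^{(2)}f$ to $\int_{X_2}f\,d\mu_2$ on a dense subclass of $L^p(X_2)$. The family $(A_N^{(2)})_{N\ge1}$ is linear, continuous in measure, and commutes with $(T_2^k)_{k\ge1}$, hence is distributive in the sense used in Lemma \ref{lemma4.5}; so once a maximal inequality and convergence on a dense class are available, the set of $f$ for which $A_N^{(2)}f$ converges a.e. is closed and contains a dense set, and is therefore all of $L^p(X_2)$.

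First I would transfer the maximal inequality. By hypothesis $\lim_N A_N^{(1)}f_1$ exists a.e. for every $f_1\in L^p(X_1)$, so in particular $\sup_N|A_N^{(1)}f_1|<\infty$ $\mu_1$-a.e. The operators $A_N^{(1)}$ are distributive, so Sawyer's theorem (Lemma \ref{lemma4.5}) yields, for $p\in(1,2]$, a constant $C$ with $\mu_1(\{\sup_N|A_N^{(1)}f_1|>\lambda\})\le C\lambda^{-p}\|f_1\|_p^p$. Because $\mu_1$ is non-atomic and $T_1$ is ergodic, $T_1$ is aperiodic, so by the Rokhlin--Halmos lemma long stretches of a $\mathbb Z$-orbit embed into $X_1$ up to arbitrarily small measure; letting the tower height tend to infinity turns this weak-type bound into the corresponding $\ell^p(\mathbb Z)$ maximal inequality for the convolution operators $a\mapsto \frac1N\sum_n a(\cdot+k_n)$. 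Calder\'on's transference principle then carries this inequality to every measure-preserving system, and in particular (again using aperiodicity of the non-atomic ergodic $T_2$) to $(X_2,\mu_2,T_2)$, giving the weak-type bound for $\sup_N|A_N^{(2)}f|$.

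It remains to identify the limit on a dense class, and here I would use the Rokhlin--Halmos lemma a second time, now as a coupling device. Following the spectral computation of Section \ref{S3}, for $f\in L^2(X_2)$ the Bochner--Herglotz theorem attaches a spectral measure $\omega_f$ on $\mathbb T$, and $L^2$-convergence of $A_N^{(2)}f$ to $\int f\,d\mu_2$ reduces to the statement that $F(N,z)\to\mathbf 1_{\{z=1\}}$, in the notation of \eqref{FNz_definition} and \eqref{FNz_limite}, for $\omega_f$-almost every $z$, the value at $z=1$ encoding the mean by ergodicity of $T_2$. To obtain this decay I would, for a fixed orbit segment $y,T_2y,\dots,T_2^{h-1}y$, build on a height-$h$ Rokhlin tower in $X_1$ the function equal to $f(T_2^i y)$ on the $i$-th level and $0$ off the tower; the hypothesis applied to this $L^\infty(X_1)$ function forces $A_N^{(1)}$ along the column to match $\frac1N\sum_n f(T_2^{\bullet+k_n}y)$ and to converge to the tower mean, which approximates $\int f\,d\mu_2$ as $h\to\infty$. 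Passing to the limit $h\to\infty$ transports convergence to the mean from $T_1$ to $T_2$.

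The main obstacle is precisely this last interchange of limits: for a tower of fixed height $h$ the identity $A_N^{(1)}\approx\frac1N\sum_n f(T_2^{\bullet+k_n}y)$ holds only while $k_N$ stays below the height, so the wrap-around at the top of the tower corrupts the large-$N$ averages, and one cannot read off $\lim_N$ from a single fixed $h$. I expect to resolve this by running the two limits diagonally and using the maximal inequality of the previous step to control the error uniformly in $N$, so that the tower mean (which tends to $\int f\,d\mu_2$) is already seen by the averages before the wrap-around takes effect. Some care is also needed to ensure that the coupling reaches the full spectral type of $T_2$, including any point masses of $\omega_f$; this is exactly the point where the strength of the hypothesis, namely that convergence holds for \emph{all} $f_1\in L^p(X_1)$ and not merely on a restricted spectral range, must be exploited.
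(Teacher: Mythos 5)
Your first two steps are sound and run essentially parallel to the paper's own argument. The paper also begins with Sawyer's theorem (Lemma \ref{lemma4.5}) to extract a weak-$(p,p)$ bound for $m_1f=\sup_N|\frac1N\sum_{n\le N}f(T_1^{k_n}\cdot)|$ from its a.e.\ finiteness, upgrades to strong type by Marcinkiewicz interpolation, and then transfers the maximal inequality to $(X_2,\beta_2,\mu_2,T_2)$ by the Rokhlin--Halmos lemma; the only difference is that the paper builds towers of the same height in \emph{both} systems simultaneously and couples them directly through a measure-scaling bijection $\delta:E_1\to E_2$ of the bases (extended level by level to $\Delta$ and the operator $Wf=f\circ\Delta^{-1}\gamma^{1/p}$), whereas you factor the same transfer through $\ell^p(\mathbb{Z})$ via Calder\'on. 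Either route yields \eqref{(6.3)}.

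The genuine gap is in your third step, the identification of the limit on a dense class, and it is deeper than the wrap-around problem you flag. The diagonal argument cannot be closed as proposed: the hypothesis supplies no rate of convergence, so for the height-$h$ tower function $g_h$ the time $N$ by which $A_N^{(1)}g_h$ is within $\varepsilon$ of its limit may vastly exceed the time at which $k_N$ leaves the tower, and the maximal inequality bounds the size of the averages, not the moment they settle. Worse, the target statement is not a consequence of the hypothesis at all: the hypothesis controls $F(N,\cdot)$ of \eqref{FNz_definition} only in $L^2$ against the spectral measures of $T_1$, and if $T_1$ is weakly mixing these are continuous away from $z=1$, so nothing is learned about $F(N,\lambda)$ at an individual $\lambda\neq 1$ --- which is exactly what is needed when $f_2$ is an eigenfunction of $T_2$ with eigenvalue $\lambda$, since then $\frac1N\sum_{n\le N}f_2(T_2^{k_n}x)=F(N,\lambda)f_2(x)$. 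Concretely, $k_n=2n$ with $X_1$ a Bernoulli shift satisfies the hypothesis (Birkhoff for $T_1^2$), yet for the ergodic non-atomic two-point extension $T_2(x,j)=(x+\alpha,j+1)$ on $\mathbb{T}\times\{0,1\}$ the averages converge to the wrong limit; no coupling with $X_1$ can repair this. For comparison, the paper does not close this step either: it disposes of the identification in one sentence by appealing to the Section \ref{S3} argument, which uses \eqref{FNz_limite} and hence Hartman uniform distribution of $(k_n)$ --- an assumption present in the surrounding discussion but absent from the statement. If you add Hartman uniform distribution to the hypotheses, your step 3 becomes the standard invariance argument and the proof closes; without it, the identification of the limit cannot be salvaged by the proposed route.
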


The condition of non-atomicity of 
$\mu_1$ and $\mu _2$ is not strictly 
necessary for the proof but it simplifies 
the proof somewhat and 
our intended applications are  to non-atomic dynamical sytems.

\begin{proof}
Let  $(X, \beta , \mu , T)$ denote a dynamical 
system  and for sequence of natural numbers 
$(k_n)_{n\geq 1}$ let
$$
m(f) = \sup _{N\geq 1} 
\Bigl| {1\over N} \sum _{n=1}^N f(T^{k_n}x)  
\Bigr|.
$$
A special case of a theorem of S. Sawyer 
\cite{sawyer}, 
tells us, after the {\color{black} hypothesis} of 
Theorem \ref{theorem_1equal2}, 
with $m = m_1$,
that 
there exists $C'_p> 0$ such that
$$
\mu \bigl( \{ x_1 \in X_1 :m_1(f_1)(x_1)  \geq \lambda \}
\bigr) \leq {C_p \| f_1\|_p \over \lambda } .
$$
Another way to say this is that 
the operator $m_1$ satisfies a 
weak-$(p,p)$ bound.  
A stronger assertion is that 
there exists $C_p*>0$
such that
\begin{equation}
\label{(6.2)}
\bigl\|m_1(f_1) \bigr\|_p \leq C_p^*\| f_1\|_p. 
\end{equation}
Here we say $m_1$ satisfies a strong-$(p,p)$ 
inequality.  
For any fixed $p$, a strong-$(p,p)$ inequality 
implies  the corresponding weak-$(p,p)$ 
inequality.  On the other hand 
as a consequence of the 
Marcinkiewicz interpolation theorem 
\cite{steinweiss}
weak-$(p_a ,p_a)$ implies strong-$(p_b, p_b)$ 
if $p_a < p_b$.  
This means that the fact that 
the weak-$(p,p)$ of $m$ for all $p>1$ 
is equivalent to it being 
strong-$(p,p)$ for all $p>1$. 

We now show that inequality 
\eqref{(6.1)} implies
\begin{equation}
\label{(6.3)}
\bigl\|m_2(f_2) \bigr\|_p 
\, \leq 
\, C_p^*\| f_2\|_p 
\end{equation}

Suppose that the dynamical systems 
$(X_1, \beta _1 , \mu _1, T_1)$ 
and $(X_2, \beta _2 , \mu _2, T_2)$ 
are both ergodic.
Now the  argument used to deduce 
Theorem \ref{theorem1} 
from \eqref{(3.1)} together with 
the ergodicy  $(X_2, \beta _2 , \mu_2 ,T_2)$
readily implies the conclusion of 
Theorem \ref{theorem9}. 

We now prove \eqref{(6.3)}.  
As both the dynamical systems 
$(X_1, \beta _1 , \mu _1, T_1)$ 
and $(X_2, \beta _2 , \mu _2, T_2)$ 
are ergodic,  
the Rokhlin-Halmos lemma says that 
given any integer $N\geq 1$ and $\epsilon > 0$, 
there exist sets $E_i \subset X_i$ $(i=1,2)$ 
with $E_i, T_i^{-1}E_i, \ldots , T_i^{-(N-1)}E_i$ 
disjoint and  
$$\mu _i (E_i \cup T_i^{-1}E_i \ldots  
\cup T_i^{-(N-1)}E_i) < 1- \epsilon .$$

Let $\gamma = {\mu _1(E_2) \over  \mu _2 (E_2)}$.  
Also for a set $D$ and a function $f$ let $f_D$ 
denote the restriction of $f$ to $D$.  

Let us observe that there always exists
a bijection from $E_1$ to $E_2$ 
that preserves the measure from $\mu_1$ 
to $\gamma \mu _2$.
This is always possible by the measure isomorphism
theorem between separable spaces.
Let us call 
$\delta$ such a 1-1 measure map 
from $E_1$ to $E_2$. It is not canonical.

Now set
$$
F_i^N= E_i \cup T_i^{-1}E_i \ldots  \cup T_i^{-(N-1)}E_i,  \eqno (i=1,2)
$$
and extend the definition of 
$ \Delta |_{E_1} : = 
\delta$ to $F_1$ by setting 
$\Delta (x)  
= 
(T_2^k \delta T_1^{-k})(x)$ for 
$x \in T^kE_1$.  
Now set
$$
Wf(y) := f(\Delta ^{-1}y)\gamma ^{1\over p}.
$$
Notice ${\rm supp} \  Wf(y) \subset T_2^kE$ 
if ${\rm supp} \ f(y) \subset T_1^kE$.  
Direct computation now gives for 
${\rm supp} \ f(y) \subset T_1^kE$
$$
\int _{T^kE_1} |f|^pd\mu_1 = 
\int _{T^kE_2} \bigl|Wf \bigr|^pd\mu_2,
$$   
and so
\begin{equation}
\label{F1f_F2Sf}
\int _{F_1} |f|^pd\mu_1 = 
\int _{F_2} \bigl|Wf \bigr|^pd\mu_2.
\end{equation}

Let
$$
C_{i,l} (f)(x) = 
{1\over l}\sum _{n=1}^lf(T_i^{k_n}x), \eqno (l=1, 2, \ldots )
$$
and set
$$
m_{i,N} (f)(x) = \sup _{1\leq l   \leq N} 
| C_{i,l} (f)(x) |,  \eqno (N=1, 2, \ldots )
$$
and evidently
$$
m_i(f)(x) = \lim _{N\to \infty}m_{i,N}(f)(x).
$$
In proving \eqref{(6.3)} 
by splitting $f$ into its real 
and imaginary parts and each of those into their 
positive and non-negative parts 
we may assume $f\geq 0$.  
Suppose $x \in T_1^kE_1$ and for 
the transformation $T_1$ we have 
$m_N(f)(x)\geq 0$.  
Then there exists $l \in [1, N]$ such that
$$
m_{i, l}(f)(x) = C_{i,l} (f)(x).
$$
Let $y=Wx$.  
Then a direct computation shows that for the 
transformation $T_2$ we have
$$
C_{2,l}(Wf)(y) =W(m_{1,N}(f))(x).
$$
From this we deduce that
$$
m_{2,N}(Wf)(y) \geq m_{1, l}(f)(x).
$$
Thus
$$
\bigl\| m_{2,N}(f) \bigr\|_p^p = 
\int _{X_2} \bigl| m_{2,N}(f)(x) \bigr|^p d\mu_2
$$ 
which is
$$
\leq \int _{F_2} \bigl| m_{2,N}(f)(x) \bigr|^pd\mu_2 
+ \epsilon .
$$ 
and we know $W$ fixes measure on 
$F_2$ and hence $L^p$ norms so we have
$$
\int _{F_2} \bigl| W(m_{2,N}(f))(y) \bigr|^pd\mu_2 
+ \epsilon 
= \bigl\| (m_{2,N}(W(f)))(y) \bigr\|_p^pd\mu_2 
+ \epsilon 
$$
$$
\leq \, c \bigl\| W(f) \bigr\|_p^p+ \epsilon ~
\leq  \,c \| f \|_p^pd + \epsilon .
$$
Now let $N \to \infty$ and 
let $\epsilon \to 0$ and 
the proof of the theorem is complete.  
\end{proof}

The ergodicity of the random dynamical system 
is implied by the {\color{black} Kolmogorov} $0-1$ law. 
If we choose $ \mu = \mu _{\alpha , \beta}$, 
we see that Theorem \ref{theorem1} 
is equivalent to \eqref{(6.1)}.  
From this we deduce all the applications of 
Theorems \ref{theorem_1equal2} 
with $T^{k_n}$ replaced by $X_{k_n}$.

\section{Hartman uniformly distributed and good universal sequences}
\label{S7}

In this section we give  some examples of 
$L^p$-good universal sequences 
for some $p\geq~\!\!\!1$.  
The examples 1, {\color{black}3--6} are Hartman uniformly 
distributed.  Example 2 is not Hartman 
uniformly distributed in general.

\begin{enumerate}
\item[1.-] {\sl The natural numbers:}

The sequence $(n)_{n=1}^\infty$ is 
$L^1$-good universal. This is 
Birkhoff's pointwise ergodic theorem.


Let $\phi$ be any non-constant polynomial mapping the natural numbers to themselves.
Note that if 
$n\in \mathbb{ N}$, 
then $n^2 \not\equiv 3\bmod 4$, 
so in general the sequences 
$(\phi(n))_{n=1}^\infty$ and
$(\phi(p_n))_{n=1}^\infty$ are 
not Hartman uniformly distributed. 
We do, however, know that if 
$\gamma\in \mathbb{ R}\setminus \mathbb{ Q},$ then 
$(\phi(n)\gamma 
)_{n=1}^\infty$ and 
$(\phi(p_n)\gamma)_{n=1}^\infty$ are 
uniformly distributed modulo $1$.

\item[2.] {\sl Condition ${\rm H}$:}  

Sequences $(k_n)_{n=1}^\infty$ that are 
both $L^p$-good universal and Hartman uniformly 
distributed can be constructed as follows.  
Set $k_n=[\tau(n)]$ $(n=1,2,\dots c)$, 
where $\tau:[1,\infty)\to[1,\infty)$ 
is a differentiable function whose derivative 
increases with its argument. 
Let $\Omega_m$ denote the cardinality 
of the set $\{n\colon a_n\le m\}$, 
and suppose, for some function 
$\varphi:[1,\infty)\to[1,\infty)$ 
increasing to infinity as its argument 
does, that we set
$$
\varrho(m) = \sup_{\{z\}\in\bigl[
{1 \over \varphi(m)},{1 \over 2}\bigr)} 
\Bigl|\sum_{n\colon k_n\le m} e(zk_n)\Bigr|.
$$
Suppose also, for some decreasing 
function $\rho:[1,\infty)\to[1,\infty)$ 
and some positive constant $\omega>0$, 
that
$$
{\varrho(m)+\Omega_{[\varphi(m)]}+
{m \over \varphi(m)}}{\Omega_m} 
\le 
\omega\rho(m).
$$
Then if we have
$$
\sum_{n=1}^\infty \rho(\theta^n) 
<\infty
$$
for all $\theta>0$, we say that 
$(a_n)_{n=1}^\infty$ satisfies 
condition ${\rm H},$ see \cite{nair}. 

Sequences satisfying condition 
${\rm H}$ are known to be both 
Hartman uniformly distributed and 
$L^p$-good universal. 
Specific sequences of integers 
that satisfy condition ${\rm H}$ include 
$a_n=[\tau(n)]$ $(n=1,2,\dots c)$ where:
     
        [I.] $\tau(n)=n^\gamma$ if $\gamma>1$ and $\gamma\notin \mathbb{ N}.$

        [II.] $\tau(n)=e^{((\log n)^{\gamma})}$ 
        for $\gamma\in(1,{3 \over 2}).$
        $\quad$ 

        [III.] $\tau(n)=b_kn^k+\dots b+b_1n+b_0$ for $b_k,\dots c, b_1$ not all rational multiplies of the same real number.

        [IV.] {\sl Hardy fields:} By a Hardy field, we mean a closed subfield (under differentiation) of the ring of germs at $+\infty$ of continuous real-valued functions with addition and multiplication taken to be pointwise. Let $\cal{H}$ denote the union of all Hardy fields. Conditions for $(a_n)_{n=1}^\infty=([\psi(n)])_{n=1}^\infty,$ where $\psi\in\cal{H}$
to satisfy condition H are given by the hypotheses of Theorems 3.4, 3.5 and 3.8. in 
\cite{boshernitzanetal}. 
Note the term ergodic is used
in this paper in place of the older term Hartman uniformly distributed.

\item[3.] {\sl A random example:} 

Suppose that 
$S=(k_n)_{n=1}^\infty$ is a strictly 
increasing sequence of natural numbers. 
By identifying $S$ with its characteristic 
function $\chi_S,$ we may view it as a 
point in $\Lambda=\{0,1\}^{\mathbb{ N}}$, 
the set of maps from $\mathbb{ N}$ to $\{0,1\}$. 
We may endow $\Lambda$ with a 
probability measure by viewing it as 
a Cartesian product 
$\Lambda=\prod_{n=1}^\infty X_n$, 
where, for each natural number $n$, 
we have $X_n=\{ 0,1\}$ and 
specify the probability $\nu_n$ on 
$X_n$ by $\nu_n(\{1\})
=
\omega_n$ with 
$0\le \omega_n\le 1$ and 
$\nu_n(\{0\})=1-\omega_n$ such that 
$\lim_{n\to\infty}\omega_n n=\infty$. 
The desired probability measure on 
$\Lambda$ is the corresponding 
product measure 
$\nu=\prod_{n=1}^\infty\nu_n$. 
The underlying $\sigma$-algebra 
$\cal{A}$ is that generated 
by the cylinders
$$
\bigl\{(\Delta_n)_{n=1}^\infty\in\Lambda\colon \Delta_{n_1}
=
\alpha_{n_1},\dots c,
\Delta_{n_k}=\alpha_{n_k}
\bigr\}
 $$
for all possible choices of 
$n_1,\dots c, n_k$ and 
$\alpha_{n_1},\dots c,\alpha_{n_k}$. 
Then almost every point 
$(a_n)_{n=1}^\infty$ in $\Lambda$, 
with respect to the measure $\nu$, 
is Hartman uniformly distributed  
(see Proposition 8.2 (i) in Bourgain
\cite{bourgain}). 
Hartman uniformly distributed 
sequences are called ergodic sequences 
in \cite{bourgain}.

\item[4.] {\sl Block sequences:} 

Suppose that $(a_n)_{n=1}^\infty=
\bigcup_{n=1}^\infty[d_n,e_n]$ is 
ordered by absolute value for disjoint 
$([d_n,e_n])_{n=1}^\infty$ with 
$d_{n-1}=O(e_n)$ as $n$ tends to infinity. 
Note that this allows the possibility that 
$(a_n)_{n=1}^\infty$ is zero density. 
This example is an immediate consequence 
of Tempelman's semigroup ergodic theorem.  
See page 218 of  \cite{bellowlosert}. 
Being a group average ergodic theorem 
this pointwise limit must be invariant, 
which ensures
that the block sequence must be 
Hartman uniformly distributed.  

\item[5.]{\sl Random perturbation of good sequences:}

Suppose that $(a_n)_{n=1}^\infty$ is an $L^p$-good 
universal sequence which is also Hartman uniformly 
distributed. 
Let $\theta=(\theta_n)_{n=1}^\infty$ 
be a sequence of $\mathbb{ N}$-valued 
independent, identically distributed 
random variables with basic probability 
space $(Y,\cal{A},\cal{P}),$ and 
a $\cal{P}$-complete $\sigma$-field 
$\cal{A}$. 
Let $ \mathbb{ E}$ denote expectation 
with respect to the basic probability 
space $(Y,\cal{A},\cal{P})$. 
Assume that there exist 
$0<\alpha<1$ and $\beta>1/\alpha$ 
such that
$$
a_n=O(e^{n^\alpha}) \mathbb{ E}\log_+^\beta|\theta_1| <\infty.
$$
Then $(k_n+\theta_n(\omega))_{n=1}^\infty$ is 
both $L^p$-good universal and 
Hartman uniformly distributed 
\cite{nairweber2}. 

\end{enumerate}

\section{Moving averages}
\label{S8}

In this paragraph we show that the assumption
of a (univariate) 
sequence
which has the property to be $L^p$ good universal and Hartman uniformly distributed
can be removed from the Theorems
\ref{theorem9},
\ref{theorem11},
\ref{theorem12},
\ref{theorem14},
\ref{theorem15}, 
\ref{theorem16},
and replaced
by the assumption of
a (bivariate) Stoltz sequence.

\subsection{Stoltz sequences and {\color{black}Theorem 7.2}}
\label{S8.1}

Let $Z$
be a collection of points in 
$\mathbb{ Z}\times \mathbb{
N}$ {\rm and let} 
$$Z^h \ := 
\ \{ (n,k) \ : \ (n,k) \
\in \ Z \ {\rm and} \ k \ \geq  \ h \},$$
$$
Z^h_{\alpha} \ := \ \{ (z,s) \ \in \ \mathbb{ Z}^2 \ 
: \ 
|z \ - \ y| \ < \ \alpha (s \ - \ r) \ {\rm for}  
\ {\rm some} \ (y,r)
\ \in Z^h \}
$$
and
$$
\mbox{}\qquad
\qquad \qquad
Z^h_{\alpha }(\lambda ) \ 
:= \ \{ n : \ 
(n, \lambda ) \ \in \
Z^h_{\alpha } \} 
\qquad \qquad \qquad( \lambda \in \mathbb{ N} ).
$$  
Geometrically we can think of $Z^1 _{ \alpha }$ as 
the lattice points contained 
in the union of all solid cones with aperture 
$\alpha$ and vertex contained in 
$Z^1= Z$.
We say a sequence of 
pairs of natural numbers 
$(n_l ,k_l )_{l=1}^{\infty}$ is \it Stoltz \rm  if 
there exists a collection of points $Z$ in 
$\mathbb{ Z}\times \mathbb{N}$, and a
function $h \ = \ h(t)$ tending to infinity with 
$t$ such
that $(n_l,k_l )_{l=t}^{\infty} \ \in \ Z^{h(t)}$ 
and
there exist $h_0$, $\alpha _0$ and $A \ > \ 0$ such
that for all integers $\lambda \ > \ 0$ we have 
$|Z^{h_0}_{\alpha _0}(\lambda)| \ \leq \ A\lambda$.  
This technical
condition is interesting because of the following 
theorem \cite{bellowjonesrosenblatt}, which will be used in the proof of Theorem
\ref{theorem3}.  \textcolor{black}{The Stoltz condition is related to the 
``cone condition" of Nagel and Stein 
\cite{nagelstein} and 
of Sueiro 
\cite{sueiro}}.

\begin{theorem}
\label{theorem2}
Let  $(X,\beta,\mu, T )$ denote a
dynamical system, with set $X$, a $\sigma$-algebra of its
subsets $\beta$, a measure $\mu$ defined on the measurable
space $(X,\beta)$ such that $\mu (X) \ = \ 1$ and a
measurable, measure preserving map $T: X \to X$.  Suppose $g$ is in $L^1(X,\beta , \mu )$ and that
the sequence of pairs on natural numbers $(n_l ,
k_l)_{l=1}^{\infty}$ is Stoltz. 
Then 
\begin{equation}
\label{limit_Stolz_measurable}
\widetilde{m_g}(x) 
:= \lim _{l\rightarrow \infty }{1\over
k_l}\sum _{j=1}^{k_l}g(T^{n_l+j}x)
\end{equation} 
exists almost everywhere with respect to $\mu$.
\end{theorem}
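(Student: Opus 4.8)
The plan is to prove the theorem by the classical two-step scheme for pointwise ergodic theorems: a maximal inequality combined with convergence on a dense subclass, linked through the Banach principle. Write $A_{n,k}g(x) = \frac{1}{k}\sum_{j=1}^{k}g(T^{n+j}x)$ and let $M^*g(x) = \sup_{l\geq 1}|A_{n_l,k_l}g(x)|$ be the associated maximal operator. The first goal is a weak-$(1,1)$ estimate $\mu(\{x : M^*g(x) > \lambda\}) \leq \frac{C}{\lambda}\|g\|_1$, with $C$ depending only on the Stoltz constants $h_0,\alpha_0,A$. Granting this, the set of $g \in L^1(X,\beta,\mu)$ for which $\widetilde{m_g}(x)$ exists almost everywhere is closed in $L^1$ (this is the content of the Banach principle, of the same type as the result of Sawyer quoted above), so it suffices to produce a dense subclass on which convergence holds.

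First I would reduce the maximal inequality to a one-dimensional statement by Calder\'on's transfer principle. Since $T$ is measure preserving and each $A_{n,k}$ is a convolution-type average against the shift, the weak-$(1,1)$ bound for $M^*$ on an arbitrary system $(X,\beta,\mu,T)$ follows, uniformly, from the same bound for the analogous operator acting by the shift on $\ell^1(\mathbb{Z})$. On $\mathbb{Z}$ the quantity $A_{n,k}\phi(m)$ is the mean of $\phi$ over the interval $[m+n+1,\,m+n+k]$, so $M^*$ is dominated by a maximal function formed over the family of intervals whose positions and lengths are prescribed by the points $(n_l,k_l)$.

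The heart of the matter, and the step I expect to be the main obstacle, is the geometric covering estimate that controls this interval maximal function. Here the Stoltz hypothesis enters decisively: the defining bound $|Z^{h_0}_{\alpha_0}(\lambda)| \leq A\lambda$ is exactly the packing count required to run a Besicovitch--Vitali covering argument on the cones $Z^{h}_{\alpha}$, which dominate the relevant intervals. Bounding the overlap of cones of aperture $\alpha_0$ with vertices in $Z$, while the truncation $h \to \infty$ absorbs the initial terms, is the delicate part; the covering bound then yields weak-$(1,1)$ for the cone maximal function, and hence for $M^*$ after transferring back to $(X,\beta,\mu,T)$.

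For the dense subclass I would take the linear span of the $T$-invariant functions and the bounded coboundaries $\{f - f\circ T : f \in L^\infty(\mu)\}$, which is dense in $L^1$ by the mean ergodic theorem. If $g$ is invariant then $A_{n_l,k_l}g(x) = g(x)$ for every $l$, so the limit is $g$. If $g = f - f\circ T$ is a bounded coboundary then the inner sum telescopes, giving $A_{n_l,k_l}g(x) = \frac{1}{k_l}(f(T^{n_l+1}x) - f(T^{n_l+k_l+1}x))$, which is at most $2\|f\|_\infty / k_l$ in absolute value and tends to $0$ because $k_l \geq h(l) \to \infty$ by the Stoltz condition. Convergence on this dense class, together with the weak-$(1,1)$ maximal inequality, yields almost everywhere convergence of $A_{n_l,k_l}g$ for every $g \in L^1(X,\beta,\mu)$, which is the assertion of the theorem.
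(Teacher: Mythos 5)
The paper does not actually prove this theorem: it is quoted from Bellow, Jones and Rosenblatt \cite{bellowjonesrosenblatt}, so there is no in-paper argument to compare yours against. Your outline does, however, reproduce the architecture of the original proof: Calder\'on transfer to the shift on $\mathbb{Z}$, a weak-$(1,1)$ maximal inequality whose geometric input is the counting bound $|Z^{h_0}_{\alpha_0}(\lambda)| \leq A\lambda$, the Banach principle, and a.e.\ convergence on the dense class spanned by $T$-invariant functions and bounded coboundaries. The soft steps are all handled correctly -- the telescoping identity for $g = f - f\circ T$, and the fact that $k_l \to \infty$, which does follow from the Stoltz condition since $k_l \geq h(t)$ for all $l \geq t$. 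The one genuinely incomplete step is the covering estimate for the cone maximal function: you correctly identify the cone-counting hypothesis as the relevant packing condition, but you do not carry out the covering argument, and this is precisely where the substance of the Bellow--Jones--Rosenblatt theorem lies. The equivalence of the cone condition with the weak-$(1,1)$ bound for moving averages is the hard content of their paper, not a routine Besicovitch--Vitali application, so as it stands your proposal is a sound proof sketch that defers the decisive estimate to a lemma it does not prove.
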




\begin{theorem}
\label{theorem3}
Let $f$ be a meromorphic  function on 
$\mathbb{ H}_c$ satisfying conditions (1),
(2) and (3) 
of Theorem \ref{theorem1}.  
Then if $(n_l , k_l)_{l\geq 1}$ is Stoltz, 
for any $s \in \mathbb{ H}_{c} 
\backslash \mathbb{ L}_{\sigma_0}$ ,
we have
\begin{equation}
\label{limit_Stolz_meromorphic}
\lim _{l\rightarrow \infty }{1\over
k_l}\sum _{j=1}^{k_l}  
f(s+iT^{n_l +
j}_{\alpha , \beta } (x)) 
= {\alpha \over \pi } \int _{\mathbb{ R}}
{f(s+ i \tau ) \over \alpha ^2 +
(\tau - \beta )^2} \, d\tau 
\end{equation}
for almost all $x$ in $\mathbb{ R}$.
\end{theorem}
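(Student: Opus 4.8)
The plan is to run the same argument as in the proof of Theorem \ref{theorem1}, but with the moving-average ergodic theorem, Theorem \ref{theorem2}, playing the role that $L^p$-good universality played there. Fix $s = \sigma + it \in \mathbb{H}_c \setminus \mathbb{L}_{\sigma_0}$ and define $g : \mathbb{R} \to \mathbb{C}$ by $g(x) = f(s+ix)$. The first step is to check that $g \in L^1(\mathbb{R}, \mathcal{B}, \mu_{\alpha,\beta})$. Since $\sigma \neq \sigma_0$, the vertical line $\{\sigma + i\tau' : \tau' \in \mathbb{R}\}$ avoids the pole $s_0$, so $f$ is holomorphic along it and $g$ is continuous; condition $(1)$ bounds $g$ when $\sigma > c'$, condition $(2)$ controls its growth as $|\tau| \to \infty$, and the estimate $\nu(\sigma)+\epsilon < 1$ (which holds because $\nu(\sigma) \le 1 + c - \sigma < 1$ near $c$ and $\nu$ is non-increasing) makes $|f(s+i\tau)|$ integrable against the Cauchy density $\frac{\alpha}{\pi(\alpha^2+(\tau-\beta)^2)}$. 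In particular the right-hand side of \eqref{limit_Stolz_meromorphic} is exactly $\int_{\mathbb{R}} g\, d\mu_{\alpha,\beta}$ and is finite.

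Next I would apply Theorem \ref{theorem2} to the system $(\mathbb{R}, \mathcal{B}, \mu_{\alpha,\beta}, T_{\alpha,\beta})$ and to the real and imaginary parts of $g$ separately. Since the pair sequence $(n_l, k_l)_{l\geq 1}$ is Stoltz, this yields the a.e. existence of
$$
\widetilde{m_g}(x) = \lim_{l\to\infty}\frac{1}{k_l}\sum_{j=1}^{k_l} g(T^{n_l+j}_{\alpha,\beta}x),
$$
which is the left-hand side of \eqref{limit_Stolz_meromorphic}. What remains, and this is the crux, is to identify this limit with the space average $\int_{\mathbb{R}} g\, d\mu_{\alpha,\beta}$.

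For the identification I would use ergodicity of $T_{\alpha,\beta}$ together with a density argument. The moving-average operators $A_l h := \frac{1}{k_l}\sum_{j=1}^{k_l} h\circ T^{n_l+j}_{\alpha,\beta}$ commute with $T_{\alpha,\beta}$ and are contractions on $L^1(\mu_{\alpha,\beta})$; the weak-$(1,1)$ maximal inequality underlying Theorem \ref{theorem2} (equivalently, Stein's maximal principle applied to this commuting family for $1 \le p \le 2$) shows that the map $h \mapsto \lim_l A_l h$ is continuous in measure, so it suffices to identify the limit on a dense subspace of $L^1(\mu_{\alpha,\beta})$. On constants the limit is the constant itself, and on a coboundary $h - h\circ T_{\alpha,\beta}$ with $h \in L^\infty$ the average telescopes to $\frac{1}{k_l}\left(h(T^{n_l+1}_{\alpha,\beta}x) - h(T^{n_l+k_l+1}_{\alpha,\beta}x)\right)$, which tends to $0$ a.e. because $h$ is bounded and $k_l \to \infty$. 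By the mean ergodic theorem the constants together with the $L^\infty$-coboundaries span a dense subspace of $L^2(\mu_{\alpha,\beta})$, hence of $L^1(\mu_{\alpha,\beta})$, so $\widetilde{m_g} = \int_{\mathbb{R}} g\, d\mu_{\alpha,\beta}$ a.e. Rewriting this integral as $\frac{\alpha}{\pi}\int_{\mathbb{R}} \frac{f(s+i\tau)}{\alpha^2+(\tau-\beta)^2}\,d\tau$ gives \eqref{limit_Stolz_meromorphic}.

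The main obstacle is precisely this identification step: Theorem \ref{theorem2} asserts only that the moving average converges a.e., not to what value, and, unlike in Theorem \ref{theorem1}, we cannot invoke Hartman uniform distribution and the Bochner--Herglotz spectral computation to force the limit to be invariant. The substitute is the maximal-inequality/Banach-principle argument above. The one point requiring care is to ensure that the weak maximal inequality for the moving averages is genuinely at our disposal; it is built into the proof of the Bellow--Jones--Rosenblatt theorem cited for Theorem \ref{theorem2}, or follows from Stein's principle since the $A_l$ commute with $T_{\alpha,\beta}$. This detour is needed because the naive attempt to prove invariance of $\widetilde{m_g}$ directly fails: the two boundary terms $\frac{1}{k_l}g(T^{n_l+1}_{\alpha,\beta}x)$ and $\frac{1}{k_l}g(T^{n_l+k_l+1}_{\alpha,\beta}x)$ need not vanish a.e. when $k_l$ is much smaller than $n_l$.
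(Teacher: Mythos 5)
Your proof is correct, and up to the crucial identification of the limit it coincides with the paper's: both verify that $g=f(s+i\cdot)$ is integrable against $\mu_{\alpha,\beta}$ (using $\nu(\sigma)<1$ exactly as you do) and invoke Theorem \ref{theorem2} for the almost everywhere existence of the moving averages. Where you genuinely diverge is in identifying the limit with $\int g\,d\mu_{\alpha,\beta}$. The paper uses no maximal inequality and no dense subspace: it writes the telescoping identity $\widetilde{m_{l,f}}(T_{\alpha,\beta}x)-\widetilde{m_{l,f}}(x)=\frac{1}{k_l}\bigl(g(T_{\alpha,\beta}^{n_l+k_l+1}x)-g(T_{\alpha,\beta}^{n_l+1}x)\bigr)$, concludes that $\widetilde{m_f}$ is $T_{\alpha,\beta}$-invariant, and then appeals to ergodicity to force the limit to be the constant $\int g\,d\mu_{\alpha,\beta}$ --- precisely the route you dismiss as ``naive.'' Your objection that the boundary terms $k_l^{-1}g(T_{\alpha,\beta}^{n_l+1}x)$ need not vanish pointwise when $n_l\gg k_l$ is accurate, and the paper is indeed terse at exactly this point, but the telescoping argument is salvageable without your heavier machinery: each boundary term has $L^1(\mu_{\alpha,\beta})$-norm at most $\|g\|_1/k_l\to 0$ by measure preservation (the Stoltz condition forces $k_l\to\infty$), while the difference of the two moving averages converges almost everywhere to $\widetilde{m_f}(T_{\alpha,\beta}x)-\widetilde{m_f}(x)$ by applying Theorem \ref{theorem2} at $x$ and at $T_{\alpha,\beta}x$; a sequence converging almost everywhere and converging to $0$ in $L^1$-norm has almost everywhere limit $0$, so invariance follows. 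Your substitute --- continuity in measure of $h\mapsto\lim_l A_l h$ via a weak $(1,1)$ maximal inequality, followed by identification on the dense subspace spanned by constants and $L^\infty$-coboundaries --- is also valid: the maximal inequality is indeed built into the Bellow--Jones--Rosenblatt proof, and alternatively follows from Sawyer's theorem (Lemma \ref{lemma4.5} with $p=1$) because the operators $A_l$ commute with the ergodic map $T_{\alpha,\beta}$ and are already known to be finite almost everywhere. The trade-off is that your route imports a quantitative estimate the problem does not strictly require but is fully explicit about where every hypothesis is used, whereas the paper's route is softer and shorter but leaves the reader to supply the $L^1$-norm argument above.
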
 

\begin{proof}
Let
\begin{equation}
\label{mlf_definition}
\mbox{} \qquad 
\widetilde{m_{l,f}}(x) 
:= {1\over
k_l}\sum _{j=1}^{k_l}  f(s+iT^{n_l + j}_{\alpha , \beta } (x)) 
\qquad (l=1,2, \ldots )
\end{equation}
and let
\begin{equation}
\label{mf_definition}
\widetilde{m_f}(x) 
:= \lim _{l\to \infty}{1\over
k_l}\sum _{j=1}^{k_l}  f(s+iT^{n_l + j}_{\alpha , \beta } (x)).
\end{equation}
Notice that
$$
\widetilde{m_{l,f}}(s+iT_{\alpha ,\beta} (x)) 
- \widetilde{m_{l,f}}(x) 
= 
{1\over k_l}( f(s+iT_{\alpha , \beta}^{n_l+k_l+1}(x)) - 
f(s+iT_{\alpha , \beta}^{n_l+1}(x))).
$$
This means that 
$\widetilde{m_{f}}(s+iT_{\alpha , \beta }(x)) 
= 
\widetilde{m_{f}}(s+i (x))$ $\mu _{\alpha , \beta}$ 
almost everywhere.  
As an ergodic dynamical system constant along 
almost all orbits must be constant, we must have $\widetilde{m_f}(x) = 
\int _{\mathbb{ R}} f(s+ix)d 
\mu _{\alpha , \beta }(x)$.  
We have therefore shown that
$$ 
\lim _{l\rightarrow \infty }{1\over
k_l}\sum _{j=1}^{k_l}  f(s+iT^{n_l
+j}_{\alpha , \beta } (x)) 
= 
{\alpha \over \pi } \int _{\mathbb{ R}}
{f(s+ i \tau ) \over \alpha ^2 +
(\tau - \beta )^2}  d\tau ,
$$
for almost all $x$ in $\mathbb{ R}$, as required.
\end{proof}

\subsection{Applications and moving average ergodic Theorems}
\label{S8.2}

We will forgo the proof of the 
Theorems below as they follow from 
those of Theorems 
\ref{theorem11},
\ref{theorem12},
\ref{theorem14}, 
\ref{theorem15}, 
\ref{theorem16} and 
\ref{theorem_1equal2} 
respectively via minor modification, in a  
similar way, using Theorem
\ref{theorem2} and Theorem \ref{theorem3}.

\begin{theorem}
\label{theorem11stoltz}
Suppose 
$(n_q , k_q)_{q\geq 1}$ is Stoltz.  
Suppose $k$ is any non-negative integer.  
Then the statement, for any natural number 
$l$, 
\begin{equation}
\label{limitzetak_Talphabeta_iterates}
\lim _{q \to \infty} {1\over k_q} 
\sum _{j=1}^{k_q}
\bigl| 
\zeta ^{(k)} (s+iT^{n_q + j}_{\alpha , \beta }( x)) 
\bigr|^l
= 
{\alpha \over \pi } \int _{\mathbb{ R}}
{|\zeta ^{(k)} (s+ i \tau )|^l \over \alpha ^2 +(\tau - \beta )^2} \,  d\tau 
\end{equation}
for $\mu _{\alpha , \beta }$ -almost all $x$ 
in $\mathbb{ R}$, is equivalent 
to the {\color{black} Lindel\"of}  {\color{black} Hypothesis}.
\end{theorem}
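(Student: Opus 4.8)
The plan is to mirror the proof of Theorem \ref{theorem11} essentially verbatim, the only change being that the ergodic theorem for $L^p$-good universal Hartman uniformly distributed sequences (Theorem \ref{theorem1}) is replaced by the moving-average ergodic theorem for Stoltz sequences (Theorem \ref{theorem2}), supplemented by the invariance argument already carried out in the proof of Theorem \ref{theorem3}. As in Theorem \ref{theorem11} the assertion is an equivalence, so I would establish the two implications separately. The key structural observation is that the converse implication makes no use of the dynamics whatsoever, so it is identical to the one for Theorem \ref{theorem11}; only the forward implication genuinely requires the Stoltz machinery. Throughout, the relevant case is $\Re(s)=\frac{1}{2}$, say $s=\frac{1}{2}+it_0$, which is where the Lindel\"of Hypothesis lives.

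For the forward implication I would assume the Lindel\"of Hypothesis and deduce the limit. By Lemma \ref{lemma10} we have $|\zeta^{(k)}(\frac{1}{2}+it)|\ll|t|^{\epsilon}$ as $|t|\to\infty$ for every $\epsilon>0$, so the integrand $g(\tau):=|\zeta^{(k)}(s+i\tau)|^{l}$ satisfies $g(\tau)\ll(1+|\tau|)^{l\epsilon}$; choosing $\epsilon<1/l$ makes this integrable against the Cauchy density $\frac{\alpha}{\pi}(\alpha^{2}+(\tau-\beta)^{2})^{-1}$, so that $g\in L^{1}(\mathbb{R},\mathcal{B},\mu_{\alpha,\beta})$ for every $k$ and every $l$. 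Since $(n_q,k_q)_{q\geq1}$ is Stoltz and $\mu_{\alpha,\beta}(\mathbb{R})=1$, Theorem \ref{theorem2} applied to $g$ then yields that the moving average $\frac{1}{k_q}\sum_{j=1}^{k_q}g(T^{n_q+j}_{\alpha,\beta}(x))$ converges $\mu_{\alpha,\beta}$-almost everywhere as $q\to\infty$. To identify the limit I would repeat the argument from the proof of Theorem \ref{theorem3}: writing $S=T_{\alpha,\beta}$ and $\widetilde{m_{q,g}}(x)=\frac{1}{k_q}\sum_{j=1}^{k_q}g(T^{n_q+j}_{\alpha,\beta}(x))$, the telescoping identity shows that $\widetilde{m_{q,g}}(Sx)-\widetilde{m_{q,g}}(x)$ is a single boundary difference divided by $k_q$, which vanishes almost everywhere as $q\to\infty$; hence the limit is $T_{\alpha,\beta}$-invariant, and the ergodicity of $T_{\alpha,\beta}$ forces it to equal the constant $\int_{\mathbb{R}}g\,d\mu_{\alpha,\beta}=\frac{\alpha}{\pi}\int_{\mathbb{R}}\frac{|\zeta^{(k)}(s+i\tau)|^{l}}{\alpha^{2}+(\tau-\beta)^{2}}\,d\tau$, which is precisely the asserted right-hand side.

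For the converse I would suppose the displayed limit holds for every natural number $l$; then its right-hand side is a finite number for every $l$, i.e. $\int_{\mathbb{R}}\frac{|\zeta^{(k)}(s+i\tau)|^{l}}{\alpha^{2}+(\tau-\beta)^{2}}\,d\tau<\infty$ for all $l$. I would then reproduce the contradiction argument from the proof of Theorem \ref{theorem11} with $2l$ replaced by $l$: were the Lindel\"of Hypothesis to fail, there would be $\eta>0$, a sequence $\tau_m\to\infty$ and a constant with $|\zeta^{(m)}(\frac{1}{2}+i\tau_m)|>C\tau_m^{\eta}$, and the crude derivative bound $|\zeta^{(k)}(\frac{1}{2}+it)|<C_k|t|$ would give $|\zeta^{(k)}(\frac{1}{2}+i\tau)|\geq\frac{1}{2}C\tau_m^{\eta}$ on an interval of length $\gg\tau_m^{-1}$ about $\tau_m$; taking $L=\frac{2}{3}\tau_m$ makes the integral over $[L,2L]$ at least of order $\tau_m^{l\eta-3}$, which diverges as $m\to\infty$ once $l>3/\eta$. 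This contradicts the finiteness just established, so the Lindel\"of Hypothesis must hold.

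The main obstacle lies in the identification of the limit in the forward direction. Theorem \ref{theorem3} is stated only for meromorphic $f$, whereas the integrand $|\zeta^{(k)}(s+i\cdot)|^{l}$ is not meromorphic, so that theorem cannot be invoked as a black box; one must instead re-run its proof for the $L^{1}$ function $g$. This is legitimate because meromorphy enters the proof of Theorem \ref{theorem3} only through the closed-form evaluation of the limiting integral, while the existence of the limit (Theorem \ref{theorem2}) and its identification with $\int_{\mathbb{R}}g\,d\mu_{\alpha,\beta}$ via invariance and ergodicity require nothing beyond $g\in L^{1}(\mu_{\alpha,\beta})$. The one delicate step is the almost-everywhere vanishing of the telescoping boundary term $\frac{1}{k_q}\bigl(g(T^{n_q+k_q+1}_{\alpha,\beta}x)-g(T^{n_q+1}_{\alpha,\beta}x)\bigr)$, which is exactly the estimate underlying the proof of Theorem \ref{theorem3}, and which I would simply transcribe into the present setting.
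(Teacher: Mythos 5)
Your proposal is correct and follows essentially the same route the paper intends: the paper explicitly forgoes the proof, stating that it follows from that of Theorem \ref{theorem11} by minor modification using Theorem \ref{theorem2} and the invariance-plus-ergodicity identification from the proof of Theorem \ref{theorem3}, which is precisely what you carry out. Your observation that Theorem \ref{theorem3} cannot be invoked as a black box for the non-meromorphic integrand $|\zeta^{(k)}(s+i\cdot)|^{l}$, and that only the $L^{1}$ input to Theorem \ref{theorem2} and the telescoping argument are actually needed, is a correct and worthwhile clarification of what the paper leaves implicit.
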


\begin{theorem}
\label{theorem12stoltz}
Suppose $(n_q , k_q)_{q \geq 1}$ is Stoltz.  
Then 
for almost all $x$ in $\mathbb{ R}$ 
with respect to Lebesgue measure we have
\begin{equation}
\label{zeta_Titerates_zeroes}
\lim _{q \to \infty} {1\over k_q} 
\sum _{j=1}^{k_q} \log 
\bigl| \zeta ({1\over 2} + 
{1\over 2}i T^{n_q + j}x)
\bigr| 
=
 \sum _{\rho: \Re (\rho ) {\color{black}>} {1\over 2}}
\log 
\Bigl| {\rho \over 1- \rho } \Bigr| .
\end{equation}
\end{theorem}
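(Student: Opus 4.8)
The plan is to run the proof of Theorem \ref{theorem12} essentially verbatim, with the moving-average machinery of Theorem \ref{theorem2} and the invariance argument of Theorem \ref{theorem3} taking over the roles played there by the $L^p$-good universal ergodic theorem and by Theorem \ref{theorem1}. I would specialise to $T=T_{1,0}$ and $\mu=\mu_{1,0}$ and set $g(x):=\log\bigl|\zeta(\tfrac12+\tfrac12 ix)\bigr|$, viewed as a function on $(\mathbb{R},\mathcal{B},\mu_{1,0})$. The first step is precisely the integrability computation already carried out in the proof of Theorem \ref{theorem12}: using $|\zeta(\tfrac12+it)|\le C|t|$ for $|t|>1$ together with the continuity of $(\log|\zeta(s)|)^p/|s|^2$ near $s=\tfrac12$, one obtains $(\log|\zeta(s)|)^p/|s|^2\ll|s|^{-(2-\delta)}$ off a neighbourhood of $\tfrac12$, so that $g\in L^p(\mathbb{R},\mu_{1,0})$ for every $p\ge 1$; in particular $g\in L^1(\mu_{1,0})$.

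Second, I would apply Theorem \ref{theorem2} to this $g$ and the Stoltz sequence $(n_q,k_q)_{q\ge1}$ to conclude that $\widetilde{m_g}(x)=\lim_{q\to\infty}\frac{1}{k_q}\sum_{j=1}^{k_q} g(T^{n_q+j}x)$ exists for $\mu_{1,0}$-almost all $x$. Third, to identify this limit I would reproduce the telescoping argument from the proof of Theorem \ref{theorem3}: writing $\widetilde{m_{q,g}}$ for the finite $q$-average, one has $\widetilde{m_{q,g}}(Tx)-\widetilde{m_{q,g}}(x)=\frac{1}{k_q}\bigl(g(T^{n_q+k_q+1}x)-g(T^{n_q+1}x)\bigr)$, so that once this difference tends to $0$ almost everywhere the limit $\widetilde{m_g}$ is $T_{1,0}$-invariant, and ergodicity of $T_{1,0}$ with respect to $\mu_{1,0}$ forces $\widetilde{m_g}(x)=\int_{\mathbb{R}} g\,d\mu_{1,0}$ almost everywhere. (Alternatively, one may invoke that Stoltz moving averages converge to the conditional expectation onto the invariant $\sigma$-algebra $\mathcal{I}$, which for ergodic $T_{1,0}$ is the constant $\int g\,d\mu_{1,0}$, bypassing the telescoping entirely.)

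Fourth, I would evaluate the constant. By the change of variables $u=t/2$,
\begin{equation*}
\int_{\mathbb{R}} g\,d\mu_{1,0}=\frac{1}{\pi}\int_{\mathbb{R}}\frac{\log|\zeta(\tfrac12+\tfrac12 it)|}{1+t^2}\,dt=\frac{1}{2\pi}\int_{\mathbb{R}}\frac{\log|\zeta(\tfrac12+iu)|}{\tfrac14+u^2}\,du=\frac{1}{2\pi}\int_{\Re(s)=\frac12}\frac{\log|\zeta(s)|}{|s|^2}\,|ds|,
\end{equation*}
and Lemma \ref{lemma13} rewrites this last integral as $\sum_{\rho:\,\Re(\rho)>\frac12}\log\bigl|\rho/(1-\rho)\bigr|$, which is exactly the asserted right-hand side.

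The step I expect to require the most care is the vanishing of the telescoping term $k_q^{-1}\bigl(g(T^{n_q+k_q+1}x)-g(T^{n_q+1}x)\bigr)$, since $g=\log|\zeta|$ is unbounded — it has logarithmic singularities at the ordinates of the critical-line zeros and grows logarithmically at infinity — and the base index $n_q$ may be large relative to the window $k_q$. Because $g\in L^p(\mu_{1,0})$ for every $p$, a Borel–Cantelli argument using the $T_{1,0}$-invariance of $\mu_{1,0}$ yields $g(T^m x)=o(m^\eta)$ almost everywhere for every $\eta>0$; one then has to combine this decay with the control on the admissible range of $n_q$ relative to $k_q$ furnished by the cone-counting bound in the Stoltz condition to drive $k_q^{-1}g(T^{n_q+k_q+1}x)$ and $k_q^{-1}g(T^{n_q+1}x)$ to zero. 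This is the only place where the Stoltz hypothesis does genuine work beyond supplying Theorem \ref{theorem2}, and it is the step I would write out in full, or else circumvent using the conditional-expectation characterisation of the moving-average limit noted above.
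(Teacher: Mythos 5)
Your proposal is correct and follows essentially the route the paper intends: the paper gives no separate argument for Theorem \ref{theorem12stoltz}, stating only that it follows from the proof of Theorem \ref{theorem12} ``via minor modification'' using Theorem \ref{theorem2} and Theorem \ref{theorem3}, which is precisely what you carry out — the integrability of $\log\bigl|\zeta(\tfrac12+\tfrac12 i\,\cdot\,)\bigr|$ with respect to $\mu_{1,0}$, the Bellow--Jones--Rosenblatt moving-average theorem for existence, invariance plus ergodicity for identification, and Lemma \ref{lemma13} for the evaluation. Your flagged concern about the telescoping term for unbounded $g$ is well taken (the paper's proof of Theorem \ref{theorem3} glosses over it), and your proposed bypass via the conditional-expectation characterisation of the moving-average limit is the cleaner resolution.
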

Again, if either side is zero, this 
is equivalent to the Riemann {\color{black} Hypothesis}.

We now consider Dirichlet $L$-{\color{black}functions}.

\begin{theorem}
\label{theorem14stoltz}
Let $L(s, \chi )$ denote the $L$-series 
associated to the character $\chi$.  
Suppose $(n_q , k_q)_{q \geq 1}$ is Stoltz,
and let $k$ be a nonnegative integer.
Then,

(i) if $\chi$ is non-principal, 
for $s \in \mathbb{ H}_{-{ 1\over 2}}\backslash
\mathbb{ L}_{1}$ we have
$$
\lim _{q \to \infty} {1\over k_q} 
\sum _{q=1}^{k_q} 
L ^{(k)} (s+iT^{n_q + j}_{\alpha , \beta }( x), \chi ) = 
{\alpha \over \pi } \int _{\mathbb{ R}}
{L ^{(k)} (s+ i \tau, \chi  ) \over \alpha ^2 +(\tau - \beta )^2} \, d\tau 
\hspace{3cm}\mbox{}
$$
\begin{equation}
\label{lseries_chinonprincipalstoltz}
\mbox{} \hspace{1cm}
=
  ~{L ^{(k)} (s +\alpha +i\beta  ,  \chi  )} 
\qquad \mbox{for almost all $x$ in $\mathbb{ R}$},
\end{equation}
(ii) if $\chi$ is principal, for 
$s \in \mathbb{ H}_{-{1\over 2}}\backslash
\mathbb{ L}_{1}$ we have
\begin{equation}
\label{lseries_chiprincipalstoltz}
\lim _{q \to \infty} {1\over k_q} 
\sum _{j=1}^{k_q} 
L ^{(k)} (s+iT^{n_q + j}_{\alpha , \beta }( x), \chi ) = 
{\alpha \over \pi } \int _{\mathbb{ R}}
{L ^{(k)} (s+ i \tau, \chi  ) \over \alpha ^2 +
(\tau - \beta )^2} \, d\tau ,
\end{equation}
for almost all $x$ in $\mathbb{ R}$.
\end{theorem}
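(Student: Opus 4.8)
The plan is to read Theorem \ref{theorem14stoltz} as the moving-average (Stoltz) counterpart of Theorem \ref{theorem14}, obtained by substituting $f = L^{(k)}(\cdot\,,\chi)$ into the Stoltz transfer result Theorem \ref{theorem3} in place of the good-universal Theorem \ref{theorem1}. Since the function $f$ and all of its analytic properties are identical to those used in Theorem \ref{theorem14}, the only genuinely new ingredient is that the ergodic average $\frac{1}{N}\sum_{n=0}^{N-1}$ is replaced by the block average $\frac{1}{k_q}\sum_{j=1}^{k_q}$ along the Stoltz sequence $(n_q,k_q)_{q\ge 1}$, with the pointwise convergence now supplied by Theorem \ref{theorem2} rather than by the $L^p$-good universality hypothesis.

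First I would verify, exactly as in the proof of Theorem \ref{theorem14}, that $f(w)=L^{(k)}(w,\chi)$ satisfies conditions (1), (2) and (3) of Theorem \ref{theorem1} on $\mathbb{H}_{-1/2}$. Condition (1) comes from the absolutely convergent Dirichlet series for $\Re(w)>1$; condition (2) from the convexity bound $|L^{(k)}(\sigma+it,\chi)| \ll_{k,\epsilon} |t|^{\mu(\sigma)+\epsilon}$ with the piecewise $\mu(\sigma)$ recorded there; and condition (3) splits by the character, with $m=0$ (entire) for non-principal $\chi$ and a single pole of order $m=k+1$ at $s_0=1$ for the principal character $\chi_0$, whose Laurent coefficients are taken from \cite{ishibashikanemitsu}. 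In both cases the choice $c=-\tfrac12$, $s_0=1$ is admissible. The one point needing real attention is that Theorem \ref{theorem2}, which underlies Theorem \ref{theorem3}, demands $g(\cdot)=L^{(k)}(s+i\,\cdot\,,\chi)\in L^1(\mathbb{R},\mu_{\alpha,\beta})$: this integrability is where the polynomial growth in condition (2) must be weighed against the $\sim |\tau|^{-2}$ decay of the Cauchy density, and it is precisely the excision of the line $\mathbb{L}_1$ that keeps the vertical line $\{s+i\tau:\tau\in\mathbb{R}\}$ clear of the pole at $w=1$ in the principal case (for non-principal $\chi$ there is no pole, so the removal of $\mathbb{L}_1$ is merely a formality and Theorem \ref{theorem3} applies throughout $\mathbb{H}_{-1/2}$).

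With the hypotheses of Theorem \ref{theorem3} in force, applying it to $f=L^{(k)}(\cdot\,,\chi)$ yields \eqref{limit_Stolz_meromorphic} directly, i.e.\ the moving average converges $\mu_{\alpha,\beta}$-almost everywhere to the Cauchy transform $\frac{\alpha}{\pi}\int_{\mathbb{R}}\frac{L^{(k)}(s+i\tau,\chi)}{\alpha^2+(\tau-\beta)^2}\,d\tau$, which is the common left-hand equality in both (i) and (ii). It then remains only to evaluate this integral, and here I would reuse verbatim the residue and contour computation already performed for the good-universal case: for non-principal $\chi$ the integrand is holomorphic across the relevant half-plane, and closing the contour gives $L^{(k)}(s+\alpha+i\beta,\chi)$, proving (i); for the principal character the same computation reproduces the piecewise formula \eqref{lk_alphabeta_dirichlet}, with the $k=0$ extension to $\mathbb{L}_1$ handled as in the $m=1$ case of Theorem \ref{theorem1}.

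The telescoping identity that drives Theorem \ref{theorem3}, namely that $\widetilde{m_{l,f}}(s+iT_{\alpha,\beta}(x)) - \widetilde{m_{l,f}}(x)$ is $O(1/k_l)$, so that the limit $\widetilde{m_f}$ is $T_{\alpha,\beta}$-invariant and hence constant by ergodicity, carries over without change, since it uses only that the average is a block average over $k_l$ consecutive iterates with $k_l\to\infty$. Consequently the main obstacle is not the ergodic-theoretic step, which is essentially formal here, but the integrability verification $L^{(k)}(s+i\,\cdot\,,\chi)\in L^1(\mu_{\alpha,\beta})$ together with the bookkeeping of the Laurent data at $s=1$; both are already settled implicitly in the treatment of Theorem \ref{theorem14}, so the argument is indeed the minor modification the authors indicate.
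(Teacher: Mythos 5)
Your proposal is correct and follows exactly the route the paper intends: the authors explicitly forgo the proof, stating that Theorem \ref{theorem14stoltz} follows from the proof of Theorem \ref{theorem14} via minor modification, using Theorem \ref{theorem3} (with Theorem \ref{theorem2} supplying the moving-average convergence) in place of Theorem \ref{theorem1}. Your verification of conditions (1)--(3) for $L^{(k)}(\cdot,\chi)$, the case split on principal versus non-principal $\chi$, and the reuse of the contour evaluation of the Cauchy transform all match the paper's treatment.
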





We now  consider the Hurwitz zeta function.

\begin{theorem}
\label{theorem16_stoltz}
Suppose $(n_q , k_q)_{q \geq 1}$ is 
Stoltz. 
For any $s$ such that  $ \Re (s) > -{1\over 2}, 
s \not= 1$, $0 \leq a < 1$  
and $k$ a non-negative integer,  we have
\begin{equation}
\label{limHurwitzk_iteratesTalphabetastoltz}
\lim _{q \to \infty} {1\over k_q} 
\sum _{j=1}^{k_q} 
\zeta ^{(k)} (s+iT^{n_q + j}_{\alpha , \beta}( x), a) 
= 
{\alpha \over \pi } \int _{\mathbb{ R}}
{\zeta ^{(k)} (s+ i \tau , a ) \over \alpha ^2 +(\tau - \beta )^2} \, d\tau  ,
\end{equation}
for almost all $x$ in $\mathbb{ R}$.
\end{theorem}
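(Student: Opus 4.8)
The plan is to obtain Theorem \ref{theorem16_stoltz} from the general moving-average result Theorem \ref{theorem3} by specializing $f$ to the Hurwitz derivative $f(s) = \zeta^{(k)}(s, a)$, precisely as Theorem \ref{theorem16} was obtained from Theorem \ref{theorem1}. Since Theorem \ref{theorem3} already packages the whole ergodic mechanism (existence of the moving-average limit via the Bellow--Jones--Rosenblatt theorem, Theorem \ref{theorem2}, followed by an invariance-plus-ergodicity argument), all that remains is to check that $\zeta^{(k)}(\cdot, a)$ satisfies the three hypotheses (1), (2), (3) of Theorem \ref{theorem1} on $\mathbb{H}_{-{1\over 2}}$, and this verification was already carried out in the proof of Theorem \ref{theorem16}.

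First I would recall that verification with $c = -{1\over 2}$, $s_0 = 1$ and $m = k+1$: the series defining $\zeta^{(k)}(s, a)$ is absolutely convergent for $\Re(s) > 1$, which gives condition (1) for any $c' > 1$; the convexity bound $\zeta^{(k)}(\sigma + it, a) \ll_{k, \epsilon} |t|^{\mu(\sigma) + \epsilon}$, with $\mu(\sigma) = {1\over 2} - \sigma$ for $\sigma < 0$ and $\mu(\sigma) = {(1-\sigma)\over 2}$ for $0 \leq \sigma \leq 1$, provides a non-increasing $\nu = \mu$ with $\nu(\sigma) \leq 1 + c - \sigma$ near $c$, which is condition (2); and $\zeta^{(k)}(s, a)$ is meromorphic on $\mathbb{H}_{-{1\over 2}}$ with a single pole, of order $k+1$, at $s = 1$, which is condition (3). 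The bound on the half line and the Laurent coefficients near $s = 1$ are those cited from \cite{steuding}.

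With these hypotheses in hand, Theorem \ref{theorem3} applies verbatim to $f = \zeta^{(k)}(\cdot, a)$ and yields \eqref{limHurwitzk_iteratesTalphabetastoltz} for every Stoltz sequence $(n_q, k_q)_{q \geq 1}$ and every $s \in \mathbb{H}_{-{1\over 2}} \setminus \mathbb{L}_1$. Indeed, the growth estimate in condition (2) forces $\mu(\Re s) < 1$ throughout $\Re(s) > -{1\over 2}$, so $x \mapsto \zeta^{(k)}(s + ix, a)$ is $\mu_{\alpha, \beta}$-integrable (the Cauchy density decays like $|x|^{-2}$), which is exactly the $L^1$ hypothesis that Theorem \ref{theorem2} requires. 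There is therefore no genuine obstacle here; the proof is a routine transcription of the one for Theorem \ref{theorem16}. The one point deserving a line of care is the bookkeeping at $\mathbb{L}_1$: Theorem \ref{theorem3} delivers the limit only off the line $\Re(s) = 1$, and the identification of the right-hand side with the piecewise expression recorded after the statement, the analogue of \eqref{lk_alphabeta_hurwitz}, is then the same contour-integration computation already given for Theorem \ref{theorem16}.
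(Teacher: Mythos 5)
Your proposal is correct and follows exactly the route the paper intends: the paper explicitly forgoes the proof of Theorem \ref{theorem16_stoltz}, stating that it follows from the proof of Theorem \ref{theorem16} via minor modification using Theorems \ref{theorem2} and \ref{theorem3}, and your write-up simply fills in that outline (the verification of conditions (1)--(3) for $\zeta^{(k)}(\cdot,a)$ with $c=-\tfrac12$, $s_0=1$, $m=k+1$, citing \cite{steuding}, followed by an application of Theorem \ref{theorem3}). Nothing further is needed.
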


In \eqref{limHurwitzk_iteratesTalphabetastoltz}
the right hand side is given by
\eqref{lk_alphabeta_hurwitz},
\eqref{gammak_alphabeta_hurwitz}
and 
\eqref{lzero_alphabeta_hurwitz}.

The analogue of Theorem
\ref{theorem_1equal2}
is now the following.

\begin{theorem}
\label{theorem_1equal2stoltz}
Consider two ergodic dynamical systems 
$(X_1, \beta _1 , \mu _1, T_1)$ and 
$(X_2, \beta _2 , \mu _2, T_2)$,
both on separable measure spaces.  
Suppose that $\mu_1$ and $\mu _2$ 
are non-atomic.
Then if for a Stoltz sequence of integers
$(n_q , k_q)_{q \geq 1}$
and each
$f_1 \in L^{p}(X_1 , \beta_1 , \mu_1)$
 we have
\begin{equation}
\label{1equal2stoltz}
\lim _{q \to \infty}{1\over k_q} 
\sum _{j=1}^{k_q} 
f_1(T_1^{n_q + j}x_1) 
=
\int _{X_1} f_1(x_1) d\mu _1 ,
\end{equation}
$\mu_1$ almost everywhere, 
then the same is true with $1$ replaced by $2$.
\end{theorem}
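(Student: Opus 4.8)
The plan is to follow the proof of Theorem \ref{theorem_1equal2} almost verbatim, replacing the one–sided ergodic averages $C_{i,l}$ by the moving averages attached to the Stoltz sequence, and substituting the role of good universality by the moving–average convergence Theorem \ref{theorem2} together with the limit–identification argument of Theorem \ref{theorem3}. For $i=1,2$ I would write
\[
\widetilde{m}_{i,l}(f)(x) := {1\over k_q}\sum_{j=1}^{k_q} f(T_i^{n_q+j}x), \qquad \widetilde{m}_i(f)(x) := \sup_{l\geq 1}\bigl|\widetilde{m}_{i,l}(f)(x)\bigr|.
\]
A one–line computation gives $\widetilde{m}_{i,l}(f)(T_i x) = \widetilde{m}_{i,l}(f\circ T_i)(x)$, so the maximal operators $\widetilde{m}_i$ commute with $T_i$ and are therefore distributive in the sense required by Lemma \ref{lemma4.5}; this is all the Sawyer machinery needs.

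First I would extract a maximal inequality on the first system. The hypothesis \eqref{1equal2stoltz} asserts that $\widetilde{m}_{1,l}(f_1)\to\int_{X_1}f_1\,d\mu_1$ almost everywhere for every $f_1\in L^p(X_1,\beta_1,\mu_1)$ and every $p>1$; in particular $\widetilde{m}_1(f_1)<\infty$ $\mu_1$–a.e. Hence the special case of Sawyer's theorem \cite{sawyer} recorded in Lemma \ref{lemma4.5} yields a weak–$(p,p)$ bound for $\widetilde{m}_1$ for each $p>1$, and the Marcinkiewicz interpolation theorem \cite{steinweiss} upgrades these to strong–$(p,p)$ inequalities $\|\widetilde{m}_1(f_1)\|_p\leq C_p^*\|f_1\|_p$ for all $p>1$, exactly as in the proof of Theorem \ref{theorem_1equal2}.

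Next I would transplant this inequality to the second system by the Rokhlin--Halmos tower construction of Theorem \ref{theorem_1equal2}. Since $\mu_1,\mu_2$ are non-atomic on separable spaces, the measure–isomorphism theorem supplies a measure map $\delta:E_1\to E_2$ between the bases of two Rokhlin towers, extended along the towers to an intertwining map $\Delta$ and then to the transplant operator $Wf(y)=f(\Delta^{-1}y)\gamma^{1/p}$, which preserves $L^p$–norms on the towers via the identity \eqref{F1f_F2Sf}. The single modification forced by the moving–average window is that $\widetilde{m}_{2,l}$ reaches the iterate $T_2^{n_q+k_q}$, so the tower must be taller than $n_q+k_q$; I would therefore work with the truncated maximal operators $\widetilde{m}_{i,N}(f)=\sup\{|\widetilde{m}_{i,l}(f)| : n_q+k_q+1\leq N\}$, build towers of height $N$, check the intertwining inequality $\widetilde{m}_{2,N}(Wf)\geq W(\widetilde{m}_{1,N}(f))$ on the relevant tower levels, and then let $N\to\infty$ and the tower–error $\epsilon\to0$. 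This produces $\|\widetilde{m}_2(f_2)\|_p\leq C_p^*\|f_2\|_p$ on the second system.

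Finally I would conclude a.e. convergence to the correct limit on $X_2$ by the Banach principle. By Theorem \ref{theorem2} the moving averages $\widetilde{m}_{2,l}(f_2)$ already converge $\mu_2$–a.e. for every $f_2\in L^1\supseteq L^p$, so only the value of the limit is at issue. For bounded $f_2$ the telescoping identity of Theorem \ref{theorem3}, namely $\widetilde{m}_{2,l}(f_2)(T_2 x)-\widetilde{m}_{2,l}(f_2)(x)=k_q^{-1}\bigl(f_2(T_2^{n_q+k_q+1}x)-f_2(T_2^{n_q+1}x)\bigr)\to0$, shows the limit is $T_2$–invariant, hence $\mu_2$–a.e. equal to $\int_{X_2}f_2\,d\mu_2$ by ergodicity; the strong–$(p,p)$ bound just transferred lets me pass from this dense class of bounded functions to all $f_2\in L^p$ in the usual way, giving \eqref{1equal2stoltz} with $1$ replaced by $2$. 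I expect the main obstacle to be the bookkeeping in the transplant step: guaranteeing the Rokhlin tower is tall enough to contain the entire window $[n_q+1,n_q+k_q]$ for every $l$ entering the truncated maximal operator, so that $\Delta$ genuinely intertwines $T_1$ and $T_2$ across all iterates appearing in $\widetilde{m}_{1,N}$ and $\widetilde{m}_{2,N}$, while the top–of–tower remainder is absorbed into the $\epsilon$–term.
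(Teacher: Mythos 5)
Your proposal is correct and follows exactly the route the paper intends: the paper omits a separate proof of Theorem \ref{theorem_1equal2stoltz}, stating only that it follows from the proof of Theorem \ref{theorem_1equal2} via minor modification using Theorem \ref{theorem2} and Theorem \ref{theorem3}, which is precisely your plan of transplanting the Sawyer/Marcinkiewicz maximal inequality through the Rokhlin--Halmos tower and identifying the limit by the telescoping invariance argument. Your extra care about making the tower tall enough to contain the whole window $[n_q+1, n_q+k_q]$ is exactly the ``minor modification'' the authors have in mind.
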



\section{Sublinearity and the Riemann zeta function}
\label{S9}

In the same vein as Theorem \ref{theorem4},
Theorem \ref{theorem5} and Theorem \ref{theorem1.4},
now with peculiar sublinear sequences,
the following Theorem can be deduced.
 
The following Lemma is taken from 
\cite{lifshitsweber2}.
Let $I_A$ denote the indicator function of the set $A$.

\begin{lemma}
\label{Lemma9.1}
Suppose $k_q= k(q),  q = 1, 2, \dots $, 
with $k$ sub-linear such that $\sup _u k'(u)$ 
is absolutely bounded.  
Also assume
$$
K(\theta ) = {1\over |\theta |} 
\int _{\min ({1\over |\theta |},1)}
{k'(u) \over |u|^2}du + 
k\left ({1\over | \theta |} \right )|\theta |
\,I_{\{ |\theta | \leq 1\}},
$$  
is uniformly bounded on $\mathbb{R}$. 
Suppose $(X, \beta, \mu , T)$ is  a dynamical system  
and $f \in L^2(X, \beta , \mu )$ and set
$$
g^k_n(f(x))={1\over n} 
\sum _{j=k_n}^{k_n+n-1}f(T^{j}(x)).
$$
Then, for any strictly increasing 
sequence of natural numbers 
$(n_q)_{q\geq 1}$, there exists $C> 0$ such that
$$
\Bigl\| 
\Bigl( \sum _{q \geq 1}
| g^k_{n_{q+1}}(f) - g^k_{n_{q}}(f)|^2
\Bigr)^{1\over 2} \Bigr\|_2 
\leq C \| f \|_2.
$$\end{lemma}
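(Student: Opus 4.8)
The plan is to reduce the square-function estimate to a pointwise bound on an associated Fourier multiplier, in the spirit of the spectral regularisation method of Lifshits and Weber already used for Lemma \ref{lemma6} and Lemma \ref{lemma7}. Let $U$ be the Koopman operator $Uf=f\circ T$ on $L^2(X,\beta,\mu)$; since $T$ is measure preserving, $U$ is an isometry, and the sequence $(\langle U^nf,f\rangle)_{n\geq0}$ is positive definite, so by the Bochner--Herglotz theorem \cite{katznelson} there is a finite positive measure $\omega_f$ on $\mathbb{T}$, of total mass $\omega_f(\mathbb{T})=\|f\|_2^2$, with $\langle U^nf,f\rangle=\int_{\mathbb{T}}z^n\,d\omega_f(z)$. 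Writing $z=e^{2\pi i\theta}$ and
$$
m_n(\theta):=\frac1n\sum_{j=k_n}^{k_n+n-1}e^{2\pi ij\theta}=\frac{e^{2\pi ik_n\theta}}{n}\cdot\frac{e^{2\pi in\theta}-1}{e^{2\pi i\theta}-1},
$$
the operator $g^k_n$ is the spectral multiplier $g^k_n(f)=m_n(U)f$, and each difference $g^k_{n_{q+1}}(f)-g^k_{n_q}(f)$ is a polynomial in $U$ with non-negative exponents, so the Herglotz representation gives $\|g^k_{n_{q+1}}(f)-g^k_{n_q}(f)\|_2^2=\int_{\mathbb{T}}|m_{n_{q+1}}(\theta)-m_{n_q}(\theta)|^2\,d\omega_f(\theta)$.

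Next I would expand the square function: since each summand is non-negative, monotone convergence yields
$$
\Bigl\|\Bigl(\sum_{q\geq1}|g^k_{n_{q+1}}(f)-g^k_{n_q}(f)|^2\Bigr)^{1/2}\Bigr\|_2^2=\sum_{q\geq1}\|g^k_{n_{q+1}}(f)-g^k_{n_q}(f)\|_2^2=\int_{\mathbb{T}}\Phi(\theta)\,d\omega_f(\theta),
$$
where $\Phi(\theta):=\sum_{q\geq1}|m_{n_{q+1}}(\theta)-m_{n_q}(\theta)|^2$. Because $\omega_f(\mathbb{T})=\|f\|_2^2$, the lemma follows at once once one shows $\Phi(\theta)\leq C'$ for a constant $C'$ independent of $\theta$ (and even of the subsequence $(n_q)$); then $C=\sqrt{C'}$ works.

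The remaining and genuinely hard step is the uniform multiplier bound $\Phi(\theta)\ll K(\theta)$, after which the hypothesis that $K$ is uniformly bounded on $\mathbb{R}$ closes the argument. To prove it I would interpolate $m_t(\theta)$ in a continuous parameter $t$, using $k_t=k(t)$ for the (sublinear, differentiable) window start, and write each increment by the fundamental theorem of calculus, $m_{n_{q+1}}(\theta)-m_{n_q}(\theta)=\int_{n_q}^{n_{q+1}}\partial_t m_t(\theta)\,dt$. Differentiating the closed form above, $\partial_t m_t(\theta)$ splits into a \emph{length} term coming from the Dirichlet kernel $\tfrac1t\cdot\tfrac{e^{2\pi it\theta}-1}{e^{2\pi i\theta}-1}$ and a \emph{shift} term proportional to $k'(t)\,\theta$ coming from the moving phase $e^{2\pi ik_t\theta}$; the hypotheses $\sup_u k'(u)<\infty$ and the sublinearity of $k$ are exactly what control these two pieces. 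Summing the squares over the disjoint intervals $[n_q,n_{q+1})$ and splitting $\theta$ according to whether $|\theta|\leq1/t$ or $|\theta|>1/t$ produces precisely the two constituents of $K(\theta)$: the integral $\tfrac1{|\theta|}\int\tfrac{k'(u)}{|u|^2}\,du$ from the shift term, and the boundary contribution $k(1/|\theta|)|\theta|\,I_{\{|\theta|\leq1\}}$ from small frequencies. This two-regime estimate of the oscillation of the moving-average symbol, with the moving start $k_n$ handled through $k'$, is where the main difficulty lies, and it is the step that distinguishes these moving averages from the ordinary Ces\`aro averages treated in Lemma \ref{lemma6}; the rest is bookkeeping with Cauchy--Schwarz over the intervals $[n_q,n_{q+1})$. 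I refer to \cite{lifshitsweber2} for the detailed symbol computation.
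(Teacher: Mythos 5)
The paper does not actually prove Lemma \ref{Lemma9.1}; it is imported verbatim from \cite{lifshitsweber2} with no argument supplied, so there is no internal proof to compare you against. Your reduction is nonetheless the right frame, and it is the frame of the cited source: pass to the Herglotz spectral measure $\omega_f$ of the Koopman isometry (legitimate because each $g^k_n$ is a polynomial in $U$ with non-negative exponents), use Tonelli to write the square-function norm as $\int_{\mathbb{T}}\Phi(\theta)\,d\omega_f(\theta)$ with $\Phi(\theta)=\sum_q|m_{n_{q+1}}(\theta)-m_{n_q}(\theta)|^2$, and observe that a bound $\Phi(\theta)\le C'$ uniform in $\theta$ and in the subsequence finishes the proof since $\omega_f(\mathbb{T})=\|f\|_2^2$. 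All of that is correct.

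The gap is that the one step which \emph{is} the lemma --- the subsequence-uniform symbol estimate $\Phi(\theta)\ll K(\theta)$ --- is only gestured at, and the gesture as written would fail. Writing $m_{n_{q+1}}(\theta)-m_{n_q}(\theta)=\int_{n_q}^{n_{q+1}}\partial_t m_t(\theta)\,dt$ and then applying Cauchy--Schwarz ``over the intervals $[n_q,n_{q+1})$'' produces a factor $n_{q+1}-n_q$ in each summand, which is not summable for an arbitrary strictly increasing $(n_q)$; and the obvious repair, dominating $\sum_q|\Delta_q|^2$ by the squared total variation $\bigl(\int_1^\infty|\partial_t m_t(\theta)|\,dt\bigr)^2$, also fails because already the Dirichlet-kernel part of $\partial_t m_t(\theta)$ has a non-integrable $1/t$ tail, so the pointwise variation of $t\mapsto m_t(\theta)$ is infinite. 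Likewise the crude envelope $|m_n(\theta)|\lesssim (n\|\theta\|)^{-1}$ only gives $\sum_{n_q\ge 1/\|\theta\|}(n_q\|\theta\|)^{-2}\lesssim\|\theta\|^{-1}$, which is unbounded. The uniformity over subsequences genuinely requires the spectral regularisation device of \cite{lifshitsweber} and \cite{lifshitsweber2} (the analogue of the kernel $Q(\theta,x)$ in Lemma \ref{lemma7}, with its logarithmic weights), combined with an oscillation inequality of Jones--Kaufman--Rosenblatt--Wierdl type (Lemma \ref{lemma6}) for the fixed-start part and a separate treatment of the moving start through $k'$, which is where the two constituents of $K(\theta)$ actually arise. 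Since you defer exactly this computation to \cite{lifshitsweber2}, what you have is a correct wrapper around a citation --- which coincides with what the paper itself does, but is not a self-contained proof, and the heuristic you offer in place of the missing estimate is not one that closes.
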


We now specialize to the case, $f(x)= \zeta ( \sigma +ix)$ with $\sigma \in [{1\over 2} , 1)$.

\begin{theorem}
\label{theorem9.2}
Suppose $k$ and  $K $ are as in Lemma \ref{Lemma9.1}. 
Now 
set, with $\sigma \in [{1\over 2} , 1)$,
$$
G^{k}_{n, \sigma} (x) = G^k_n(\zeta (\sigma +ix))={1\over n} \sum _{j=k_n}^{k_n+n-1}\zeta (\sigma + iT^{j}(x)).
$$
Then, for any strictly increasing sequence of 
natural numbers $(n_q)_{q \geq 1}$, 
there exists a constant $C> 0$ such that
$$
\Bigl\| 
\Bigl( \sum _{q \geq 1}
\bigl| G^k_{n_{q+1}, \sigma }(\zeta ) - G^k_{n_{q}, \sigma }(\zeta )
\bigr|^2
\Bigr)^{1\over 2} 
\Bigr\|_2 
\leq { C \over \pi \sigma}
\left | {\zeta (2\sigma ) \over 2}+ {\zeta ( {2\sigma -1}) \over 2\sigma -1} \right |.
$$
\end{theorem}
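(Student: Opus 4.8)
The plan is to derive the estimate as a direct application of Lemma \ref{Lemma9.1} to the single function $f(x) = \zeta(\sigma + ix)$, followed by the explicit $L^2$-norm evaluation already carried out in the proof of Theorem \ref{theorem4}. First I would verify that the hypotheses of Lemma \ref{Lemma9.1} are met for this choice: since $G^k_{n,\sigma}(\zeta) = g^k_n(\zeta(\sigma + i\cdot))$ by the very definition of $G^k_{n,\sigma}$, the only point to check is that $\zeta(\sigma + i\cdot)$ belongs to $L^2(\mathbb{R}, \mathcal{B}, \mu_{1,0})$. This is precisely the integrability established in the proof of Theorem \ref{theorem5}, using the classical bound $\zeta(\sigma + it) \ll_\epsilon |t|^{(1-\sigma)/2 + \epsilon}$ of Titchmarsh \cite{titchmarsh} together with the decay of the Cauchy density $1/(\pi(1+t^2))$; the resulting integral converges for every $\sigma \in [\tfrac12, 1)$.

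With membership in $L^2$ secured, I would invoke Lemma \ref{Lemma9.1} with $f(x) = \zeta(\sigma + ix)$ and the given strictly increasing sequence $(n_q)_{q\geq 1}$, obtaining directly
$$
\Bigl\| \Bigl( \sum_{q\geq 1} \bigl| G^k_{n_{q+1},\sigma}(\zeta) - G^k_{n_q,\sigma}(\zeta) \bigr|^2 \Bigr)^{1/2} \Bigr\|_2 \ \leq \ C \, \bigl\| \zeta(\sigma + i\cdot) \bigr\|_2 .
$$
It then remains only to evaluate the right-hand norm. Because the explicit form of the asserted bound forces the ambient invariant measure to be $\mu_{1,0}$, one has
$$
\bigl\| \zeta(\sigma + i\cdot) \bigr\|_2^2 = \frac{1}{\pi} \int_{\mathbb{R}} \frac{|\zeta(\sigma + i\tau)|^2}{1 + \tau^2} \, d\tau,
$$
which is exactly the integral computed in the proof of Theorem \ref{theorem4}, equal to $-\tfrac{1}{\sigma}\bigl(\tfrac{\zeta(2\sigma)}{2} + \tfrac{\zeta(2\sigma-1)}{2\sigma-1}\bigr)$, the bracketed quantity being negative. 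Substituting this evaluation and absorbing numerical factors into $C$ reproduces the bound, exactly as in the passage \eqref{CsurPi}.

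The argument carries essentially no analytic obstacle of its own: the delicate square-function estimate is supplied wholesale by Lemma \ref{Lemma9.1}, a result of Lifshits and Weber, and the norm computation is one already performed for Theorem \ref{theorem4}. The only points needing care are bookkeeping. First, the specialisation $\alpha = 1$, $\beta = 0$ must be made explicit, since it is the choice of $\mu_{1,0}$ that produces the weight $1/(1+\tau^2)$ and hence the particular constant on the right. Second, one should reconcile the power of the $L^2$-norm appearing on the left: Lemma \ref{Lemma9.1} controls $\|\cdot\|_2$ by $C\|f\|_2$, so it is the \emph{square} of the displayed norm that is bounded by $\tfrac{C}{\pi\sigma}\bigl|\tfrac{\zeta(2\sigma)}{2} + \tfrac{\zeta(2\sigma-1)}{2\sigma-1}\bigr|$, in line with the analogous estimate \eqref{equation4.14} of Theorem \ref{theorem4.14}; the constant $C$ is adjusted accordingly.
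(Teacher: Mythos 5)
Your proposal is correct and follows exactly the route the paper intends: Theorem \ref{theorem9.2} is obtained by specialising Lemma \ref{Lemma9.1} to $f(x)=\zeta(\sigma+ix)$ on $(\mathbb{R},\mathcal{B},\mu_{1,0},T)$ and then inserting the evaluation of ${1\over \pi}\int_{\mathbb{R}}|\zeta(\sigma+i\tau)|^2(1+\tau^2)^{-1}\,d\tau$ already carried out in the proof of Theorem \ref{theorem4}. Your observation that the bound naturally lands on the \emph{square} of the displayed norm (as in \eqref{equation4.14}) is a fair and correct reading of the intended statement.
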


\section*{Acknowledgements}  
We thank the referee for very detailed comments, substantially improving the presentation of this paper, and for bringing to our attention important additional references of material significance to this topic.

\end{document}